\makeatletter \@addtoreset{equation}{section} \makeatother
\newtheorem{theorem}{Theorem}[section]
\newtheorem{definition}{Definition}[section]
\newtheorem{proposition}{Proposition}[section]
\newtheorem{lemma}{Lemma}[section]
\newtheorem{remark}{Remark}[section]
\begin{document}
\title{Blow-up solutions concentrated along minimal submanifolds for asymptotically critical Lane-Emden systems on Riemannian manifolds}

\author{Wenjing Chen\footnote{Corresponding author.}\ \footnote{E-mail address:\, {\tt wjchen@swu.edu.cn} (W. Chen), {\tt zxwangmath@163.com} (Z. Wang).}\  \ and Zexi Wang\\
\footnotesize  School of Mathematics and Statistics, Southwest University,
Chongqing, 400715, P.R. China}

\date{ }
\maketitle

\begin{abstract}
{Let $(\mathcal{M},g)$ and $(\mathcal{K},\kappa)$ be two
Riemannian manifolds of dimensions $N$ and $m$, respectively. Let $\omega\in C^2(\mathcal{M})$, $\omega>0$. The warped product $\mathcal{M}\times _\omega \mathcal{K}$ is the $(N+m)$-dimensional product manifold $\mathcal{M}\times \mathcal{K}$ furnished with metric $g+\omega^2\kappa$.
We are concerned with the following elliptic system
\begin{align}\label{yuanshi}
 \left\{
  \begin{array}{ll}
  -\Delta_{g+\omega^2\kappa} u+h(x)u=v^{p-\alpha \varepsilon},
  \ \  &\mbox{in $(\mathcal{M}\times _\omega \mathcal{K},g+\omega^2\kappa)$},\\
   -\Delta_{g+\omega^2\kappa} v+h(x)v=u^{q-\beta \varepsilon},
  \ \  &\mbox{in $(\mathcal{M}\times _\omega \mathcal{K},g+\omega^2\kappa)$},\\
  u,v>0,
  \ \  &\mbox{in $(\mathcal{M}\times _\omega \mathcal{K},g+\omega^2\kappa)$},
    \end{array}
    \right.
  \end{align}
where $\Delta _{g+\omega^2\kappa}=div_{g+\omega^2\kappa} \nabla$ is the Laplace-Beltrami operator on $\mathcal{M}\times _\omega \mathcal{K}$, $h(x)$ is a $C^1$-function on $\mathcal{M}\times _\omega \mathcal{K}$, $\varepsilon>0$ is a small parameter, $\alpha,\beta>0$ are real numbers, $\varepsilon$ is a positive parameter, $(p,q)\in (1,+\infty)\times (1,+\infty)$ satisfies $\frac{1}{p+1}+\frac{1}{q+1}=\frac{N-2}{N}$.
For any given integer $k\geq2$, using the Lyapunov-Schmidt reduction, we prove that problem \eqref{yuanshi} has a $k$-peaks solution concentrated along a $m$-dimensional minimal submanifold of $(\mathcal{M}\times _\omega \mathcal{K})^k$.}

\smallskip
\emph{\bf Keywords:} Blow-up solutions; Concentrated along minimal submanifold; Lane-Emden system; Riemannian manifolds.

\smallskip
\emph{\bf 2020 Mathematics Subject Classification:} 58J05, 35J47, 35B33.
\end{abstract}

\section{Introduction}
Let $(\mathfrak{M},\mathfrak{g})$ be a $n$-dimensional smooth compact Riemannian manifold without boundary, where $\mathfrak{g}$ denotes the metric tensor. We consider the following elliptic system
\begin{align}\label{pro}
 \left\{
  \begin{array}{ll}
  -\Delta_\mathfrak{g} u+h(x)u=v^{p},
  \ \  &\mbox{in}\ (\mathfrak{M},\mathfrak{g}),\\
   -\Delta_\mathfrak{g} v+h(x)v=u^{q},
  \ \  &\mbox{in}\ (\mathfrak{M},\mathfrak{g}),\\
  u,v>0,
  \ \  &\mbox{in}\ (\mathfrak{M},\mathfrak{g}),
    \end{array}
    \right.
  \end{align}
 where $\Delta _\mathfrak{g}=div_\mathfrak{g} \nabla$ is the Laplace-Beltrami operator on $\mathfrak{M}$, $h(x)$ is a $C^1$-function on $\mathfrak{M}$,  $p,q>1$.

The starting point on the study of system \eqref{pro} is its scalar version
\begin{equation}\label{yamabe}
   -\Delta_\mathfrak{g} u+h(x)u=u^{p},\quad u>0,\quad \text{in $(\mathfrak{M},\mathfrak{g})$}.
 \end{equation}
If $p\in(1,2^*-1)$ ($2^*=\frac{2n}{n-2}$ if $n\geq3$ and $2^*=+\infty$ if $n=1,2$), by the compact embedding $H_\mathfrak{g}^1(\mathfrak{M})\hookrightarrow L^p_\mathfrak{g}(\mathfrak{M})$, one can obtain a solution of \eqref{yamabe}. In \cite{MP1}, Micheletti and Pistoia also considered the following subcritical problem
\begin{equation}\label{sub}
   -\varepsilon^2\Delta_\mathfrak{g} u+u=u^{p},\quad u>0,\quad \text{in $(\mathfrak{M},\mathfrak{g})$},
 \end{equation}
where $p\in(1,2^*-1)$, $n\geq2$, and $\varepsilon>0$ is a small parameter. By performing the Lyapunov-Schmidt reduction procedure, they obtained a single blowing-up solution for \eqref{sub}. Successively,  multiple blowing-up solutions and clustered solutions are constructed in \cite{MP2} and \cite{DMP}, respectively.

In the critical case $p=2^*-1$, the situation is more complicated, and
the existence of solutions for \eqref{yamabe} is related to the position of the potential $h$ with respect to the geometric potential
\begin{equation*}
  h_\mathfrak{g}=\frac{n-2}{4(n-1)}Scal_\mathfrak{g},
\end{equation*}
where $Scal_g$ is the scalar curvature of the manifold.
Particularly, if $h(x)\equiv h_\mathfrak{g}$, equation \eqref{yamabe} is referred as the {\em Yamabe problem} and it always has a solution, see
 e.g. \cite{Y,Au,Sch,Tru}.

The supercritical case $p>2^*-1$ is even more difficult to deal with.   Based on the Lyapunov-Schmidt reduction,
Micheletti et al. \cite{MPV} first constructed a single blowing-up solution for  \eqref{yamabe}  in asymptotically critical case (i.e., $p=2^*-1\pm \varepsilon$ with $\varepsilon>0$ small enough). Here, we say that a family of solutions $u_\varepsilon$ of \eqref{yamabe} blows up and concentrates at $\xi_0\in \mathfrak{M}$ if there exists a family of points $\xi_\varepsilon \in \mathfrak{M}$ such that $\xi_\varepsilon\rightarrow \xi_0$ and $u_\varepsilon(\xi_\varepsilon)\rightarrow+\infty$ as $\varepsilon\rightarrow0$. Since then, equation  \eqref{yamabe} has been studied extensively, see \cite{DMW,RV} for sign-changing blowing-up solutions, \cite{Deng} for multiple blowing-up solutions, \cite{Chen} for clustered solutions, \cite{CK,PV} for sign-changing bubble tower solutions, and so on. In particular, we are interested in the result due to Ghimenti et al. \cite{GMP}, where the authors obtained a single blowing-up solution concentrated along a minimal submanifold of $\mathfrak{M}$.

If  $\mathfrak{M}$ is either a smooth bounded domain or $\mathbb{R}^n$,  system \eqref{pro} reduces to the following elliptic system
 \begin{align}\label{back}
 \left\{
  \begin{array}{ll}
  -\Delta u=v^p,
  \quad  &\mbox{in}\ \Omega,\\
   -\Delta  v=u^q,
  \quad  &\mbox{in}\ \Omega,\\
   u,v>0\quad &\mbox{in}\  \Omega,
    \end{array}
    \right.
  \end{align}
called the Lane-Emden system. Here, $n\geq3$, $p,q>1$, $\Omega$ is either a smooth bounded domain or $\mathbb{R}^n$.
 In this case, the critical hyperbola
 \begin{equation}\label{ch}
    \frac{1}{p+1}+\frac{1}{q+1}=\frac{n-2}{n},
  \end{equation}
plays a similar role to the Sobolev exponent $2^*$ for the single equation.
System \eqref{back} has received remarkable attention for decades.   When $\Omega=\mathbb{R}^n$,
by applying the concentration compactness principle, Lions \cite{Lions} found a positive least energy solution of \eqref{back}-\eqref{ch}, which is radially symmetric and radially decreasing.  Wang \cite{Wang} and Hulshof and Van der Vorst \cite{HV} independently proved the uniqueness of the
positive least energy solution $(U_{1,0}(z),V_{1,0}(z))$ to \eqref{back}-\eqref{ch}.
Moreover, Frank et al. \cite{FKP} established the non-degeneracy of \eqref{back}-\eqref{ch} at each least energy solution.
 Using the Lyapunov-Schmidt reduction and the non-degeneracy result obtained in \cite{FKP}, Guo et al. \cite{GLP1} established the existence and non-degeneracy of multiple blowing-up solutions to \eqref{back}-\eqref{ch} with two potentials.
For more investigations about system \eqref{back} with $\Omega=\mathbb{R}^n$, we can see \cite{GM,CS}.

If $\Omega$ is a smooth bounded domain, Kim and Pistoia \cite{KP} obtained the existence of multiple blowing-up solutions for the following system
\begin{align*}
 \left\{
  \begin{array}{ll}
  -\Delta u=|v|^{p-1}v+\varepsilon(\alpha u+\beta_1 v),
  \quad  &\mbox{in}\ \Omega,\\
   -\Delta  v=|u|^{q-1}u+\varepsilon(\alpha v+\beta_2 u),
  \quad  &\mbox{in}\ \Omega,\\
   u,v=0,\quad &\mbox{on}\ \partial \Omega,
    \end{array}
    \right.
  \end{align*}
where $n\geq8$, $\varepsilon>0$, $\alpha,\beta_1,\beta_2\in \mathbb{R}$, $1<p<\frac{n-1}{n-2}$, and $(p,q)$ satisfies \eqref{ch}.
Meanwhile, they also found solutions to the following asymptotically critical system when $n\geq4$, $\alpha,\beta>0$
\begin{align*}
 \left\{
  \begin{array}{ll}
  -\Delta u=v^{p-\alpha \varepsilon},
  \quad  &\mbox{in}\ \Omega,\\
   -\Delta  v=u^{q-\beta \varepsilon},
  \quad  &\mbox{in}\ \Omega,\\
  u,v>0,\quad &\mbox{in}\ \Omega,\\
   u,v=0,\quad &\mbox{on}\ \partial \Omega.
    \end{array}
    \right.
  \end{align*}
Moreover, using the local Pohozaev identity, Guo et al. \cite{GHP} proved its non-degeneracy.
Recently,
Jin and Kim \cite{JK} studied the Coron's problem for the critical Lane-Emden system, and established the existence, qualitative properties of positive solutions.
Guo and Peng \cite{GP} considered sign-changing solutions to the asymptotically critical Lane-Emden system with Neumann boundary conditions. It is worth emphasizing that
Guo et al. \cite{GLP2} obtained positive solutions with boundary layers concentrating along one or several submanifolds for the asymptotically critical Lane-Emden system.
For more classical results regarding Hamiltonian systems in bounded domains,
we refer the readers to \cite{CK1,DY,PST,BMT,KM} and references therein.

Motivated by the results already mentioned above,  noticed that a point is a $0$-dimensional manifold, it is natural to ask that, {\em does problem \eqref{pro} have solutions blowing up and concentrating at a $m$-dimensional submanifold of $\mathfrak{M}$ when $\frac{1}{p+1}+\frac{1}{q+1}\rightarrow\frac{n-m-2}{n-m}$?} %
In this paper, we give a positive answer when $(\mathfrak{M},\mathfrak{g})$ is a warped product manifold.

We recall the notion of warped product manifold introduced by Bishop and O'Neill in \cite{BO}. Let $(\mathcal{M},g)$ and $(\mathcal{K},\kappa)$ be two
Riemannian manifolds of dimensions $N$ and $m$, respectively. Let $\omega\in C^2(\mathcal{M})$, $\omega>0$. The warped product $\mathfrak{M}:=\mathcal{M}\times _\omega \mathcal{K}$ is the $n:=N+m$ dimensional product manifold $\mathcal{M}\times \mathcal{K}$ furnished with metric $\mathfrak{g}=g+\omega^2\kappa$. The function $\omega$ is called a warping function.
If $u\in C^2(\mathfrak{M})$, it holds
\begin{equation}\label{lap}
  \Delta_{\mathfrak{g}}u=\Delta_g u+\frac{N}{\omega}\nabla_g \omega \cdot \nabla_g u+\frac{1}{\omega^2}\Delta _\kappa u.
\end{equation}
Assume that $h$ is invariant with respect to $\mathcal{K}$, i.e., $h(x,y)=h(x)$ for any $(x,y)\in \mathcal{M}\times \mathcal{K}$. We look for solutions of \eqref{pro} which are invariant with respect to $\mathcal{K}$, i.e., $(u(x,y),v(x,y))=(u_1(x),v_1(x))$, then by \eqref{lap}, we know $(u,v)$ solves \eqref{pro} if and only if $(u_1,v_1)$ solves
\begin{align*}
 \left\{
  \begin{array}{ll}
  -\Delta_g u_1-\frac{N}{\omega}\nabla_g \omega \cdot \nabla_g u_1+hu_1=v_1^{p},
  \ \  &\mbox{in}\ (\mathcal{M},g),\\
   -\Delta_g v_1-\frac{N}{\omega}\nabla_g \omega \cdot \nabla_g v_1+hv_1=u_1^{q},
  \ \  &\mbox{in}\ (\mathcal{M},g),\\
  u_1,v_1>0,
  \ \  &\mbox{in}\ (\mathcal{M},g),
    \end{array}
    \right.
  \end{align*}
or equivalently
 \begin{align}\label{pro2}
 \left\{
  \begin{array}{ll}
  -div_g(\omega^N\nabla_g u_1)+\omega^Nhu_1=\omega^Nv_1^{p},
  \ \  &\mbox{in}\ (\mathcal{M},g),\\
   -div_g(\omega^N\nabla_g v_1)+\omega^Nhv_1=\omega^Nu_1^{q},
  \ \  &\mbox{in}\ (\mathcal{M},g),\\
  u_1,v_1>0,
  \ \  &\mbox{in}\ (\mathcal{M},g).
    \end{array}
    \right.
  \end{align}
  It is clear that if $(u_1,v_1)$ is a pair of solutions  of \eqref{pro2} which blows up and concentrates at $\xi_0\in \mathcal{M}$, then $(u(x,y),v(x,y))=(u_1(x),v_1(x))$ is a  pair of solutions  of \eqref{pro} which blows up and concentrates along the fiber $\{\xi_0\}\times \mathcal{K}$, which is a $m$-dimensional submanifold of $\mathfrak{M}$. Moreover, we can see that  $\{\xi_0\}\times \mathcal{K}$ is a minimal submanifold of $\mathfrak{M}$ if $\xi_0$ is a critical point of $\omega$.

Therefore, we are led to study the following problem
\begin{align}\label{prob}
 \left\{
  \begin{array}{ll}
  -div(a(x)\nabla_gu)+a(x)hu=a(x)v^{p-\alpha \varepsilon},
  \ \  &\mbox{in}\ (\mathcal{M},g),\\
   -div(a(x)\nabla_gv)+a(x)hv=a(x)u^{q-\beta \varepsilon},
  \ \  &\mbox{in}\ (\mathcal{M},g),\\
  u,v>0,
  \ \  &\mbox{in}\ (\mathcal{M},g),
    \end{array}
    \right.
  \end{align}
where $a\in C^2(\mathcal{M})$, $\min\limits_{x\in \mathcal{M}}a(x)>0$, $h\in C^1(\mathcal{M})$, $\varepsilon>0$ is a small parameter, $\alpha,\beta>0$ are real numbers, $(p,q)\in (1,+\infty)\times (1,+\infty)$ is a pair of numbers satisfying
 \begin{equation}\label{pq}
    \frac{1}{p+1}+\frac{1}{q+1}=\frac{N-2}{N}.
  \end{equation}
Without loss of generality, we assume that $1<p\leq \frac{N+2}{N-2}\leq q$. 

To state our main result, we give the following  definition.
\begin{definition}
{\rm For $k\geq2$ to be a positive integer, let $(u_\varepsilon,v_\varepsilon)$ be a family of solutions of \eqref{prob}, we say that
$(u_\varepsilon,v_\varepsilon)$ blows up and concentrates at $\bar{\xi^0}=(\xi_1^0,\xi_2^0,\cdots, \xi_k^0)\in \mathcal{M}^k$ if there exists $(\delta_1^\varepsilon,\delta_2^\varepsilon,\cdots, \delta_k^\varepsilon)\in (\mathbb{R}^+)^k$ and $\bar{\eta}^\varepsilon=(\eta_1^\varepsilon,\eta_2^\varepsilon,\cdots, \eta_k^\varepsilon)\in (\mathbb{R}^N)^k$ such that
$\delta_j^\varepsilon\rightarrow0$ and $\eta_j^\varepsilon\rightarrow 0$ as $\varepsilon\rightarrow0$ for $j=1,2,\cdots,k$, and
\begin{equation*}
  \Big\|(u_\varepsilon,v_\varepsilon)-\Big(\sum\limits_{j=1}^kW_{\delta_j^\varepsilon,\xi^0_j,\eta_j^\varepsilon},
  \sum\limits_{j=1}^kH_{\delta_j^\varepsilon,\xi^0_j,\eta_j^\varepsilon}\Big)\Big\|\rightarrow0,\quad \text{as $\varepsilon\rightarrow0$},
\end{equation*}
where $\|\cdot\|$ and $\big(W_{\delta,\xi,\eta},H_{\delta,\xi,\eta}\big)$ are defined in \eqref{norm} and \eqref{fun1}, respectively.}
\end{definition}

Let us recall $(U_{1,0}(z),V_{1,0}(z))$, which is the least energy solution of \eqref{back}-\eqref{ch} in $\mathbb{R}^N$ given by Wang \cite{Wang} and Hulshof and Van der Vorst \cite{HV}. Let $L_1,L_2,\cdots,L_7$ be positive numbers defined by
\begin{align}\label{chang}
 \left\{
  \begin{array}{ll}
  \displaystyle L_1=\int\limits_{\mathbb{R}^N}\nabla U_{1,0}\cdot \nabla V_{1,0}dz,\\
  \displaystyle L_2=\int\limits_{\mathbb{R}^N}|z|^2\nabla U_{1,0}\cdot \nabla V_{1,0}dz,\\
 \displaystyle L_3=\int\limits_{\mathbb{R}^N}U_{1,0}\cdot  V_{1,0}dz,
    \end{array}
    \right.
    \quad \text{and}\quad\ \
    \left\{
  \begin{array}{ll}
  \displaystyle L_4=\int\limits_{\mathbb{R}^N}|z|^2V_{1,0}^{p+1}dz,\\
   \displaystyle L_5=\int\limits_{\mathbb{R}^N}|z|^2U_{1,0}^{q+1}dz,\\
  \displaystyle L_6=\int\limits_{\mathbb{R}^N}V_{1,0}^{p+1}\log V_{1,0}dz,\\
  \displaystyle  L_7=\int\limits_{\mathbb{R}^N}U_{1,0}^{q+1}\log U_{1,0}dz.
    \end{array}
    \right.
  \end{align}

Our main results state as follows.
\begin{theorem}\label{th}
Let $(\mathcal{M},g)$ be a smooth compact Riemannian manifold of dimension $N\geq 8$, 
 for any given integer $k\geq2$, set $\bar{\xi^0}=(\xi_1^0,\xi_2^0,\cdots, \xi_k^0)\in \mathcal{M}^k$, let $\xi_j^0$ be a non-degenerate critical point of $a(x)$, and
 \begin{equation}\label{condi1}
   h(\xi^0_j)>\Big(L_2-\frac{L_4}{p+1}-\frac{L_5}{q+1}\Big)\frac{Scal_g(\xi^0_j)}{6NL_3} -
  \Big(L_2-\frac{L_4}{p+1}-\frac{L_5}{q+1}\Big) \frac{\Delta_g a(\xi^0_j)}{2NL_3a(\xi^0_j)}
 \end{equation}
 for any $j=1,2,\cdots,k$.
 Assume that one of the following conditions holds:
 \begin{equation}\label{condi}
 (i)\, \frac{N}{N-2}<p<\frac{N+2}{N-2} \,\,\text{and}\,\, N\geq 8;\,\,\,
 (ii)\, p=\frac{N+2}{N-2}\,\, \text{and}\,\, N\geq10;
 \,\,\,
 (iii)\, 1<p<\frac{N}{N-2}\,\, \text{and} \,\,N\geq 8.
 \end{equation}
 Then for any $\varepsilon>0$ small enough, system \eqref{prob} admits a family of solutions $(u_\varepsilon,v_\varepsilon)$, which blows up and concentrates at $\bar{\xi}^0$ as $\varepsilon\rightarrow0$.
\end{theorem}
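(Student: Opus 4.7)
The plan is to implement a Lyapunov-Schmidt finite-dimensional reduction in the spirit of \cite{MPV, GMP, GLP1, GLP2}, adapted to the Hamiltonian structure of \eqref{prob} on the weighted manifold $(\mathcal{M},g,a)$. I would first reformulate \eqref{prob} in the dual Sobolev framework dictated by the critical hyperbola \eqref{pq}, writing it as a fixed-point equation $(u,v) = T_\varepsilon(u,v)$ in a product Banach space on $(\mathcal{M},g)$ weighted by $a$. Using geodesic normal coordinates centred at each $\xi_j^0$, I would then construct the $k$-peak approximate solution
\[
(\mathcal{W}, \mathcal{Z}) := \sum_{j=1}^k \bigl(W_{\delta_j, \xi_j^0, \eta_j}, \, H_{\delta_j, \xi_j^0, \eta_j}\bigr),
\]
by pulling back, rescaling at scale $\delta_j$ and translation $\eta_j\in\mathbb{R}^N$, and cutting off the Hulshof--Van der Vorst--Wang ground state $(U_{1,0}, V_{1,0})$ of \eqref{back}-\eqref{ch}. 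The parameters $(\bar\delta,\bar\eta)$ will be constrained to a compact neighbourhood of $(0,0)$, with $\delta_j$ eventually comparable to $\sqrt{\varepsilon}$.

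Next, I would decompose any candidate solution as $(\mathcal{W} + \phi_1, \mathcal{Z} + \phi_2)$ with the correction lying in the orthogonal complement $K^\perp$ of the finite-dimensional approximate kernel spanned by the $\delta_j$- and $\eta_j$-derivatives of each bubble. The non-degeneracy result of \cite{FKP} for $(U_{1,0}, V_{1,0})$, transferred to the manifold by a blow-up contradiction argument, yields uniform invertibility of the linearized operator on $K^\perp$. A contraction mapping in the appropriate dual norm then produces a unique small correction $(\phi_1, \phi_2)(\bar\delta,\bar\eta)$, smoothly depending on the parameters, with $\|(\phi_1,\phi_2)\| = o(\delta)$.

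The problem thus reduces to locating critical points of $\mathcal{J}_\varepsilon(\bar\delta, \bar\eta) := I_\varepsilon(\mathcal{W} + \phi_1, \mathcal{Z} + \phi_2)$, where $I_\varepsilon$ is the dual energy of \eqref{prob}. Using Taylor expansions of the metric (producing $Scal_g(\xi_j^0)$ via $L_2$), of $a$ (producing $\Delta_g a(\xi_j^0)/a(\xi_j^0)$ and the Hessian $D^2 a(\xi_j^0)$), and of $h$ (producing $L_3\,h(\xi_j^0)$), together with the expansion of the perturbed powers $p-\alpha\varepsilon$ and $q-\beta\varepsilon$ (producing $L_4,L_5,L_6,L_7$ terms and $\log\delta_j$), I would obtain
\[
\mathcal{J}_\varepsilon(\bar\delta,\bar\eta) = \Lambda_0 + \sum_{j=1}^k \Bigl[\delta_j^2\, A(\xi_j^0) + \varepsilon\, B_j(\delta_j) + Q(\xi_j^0)[\eta_j,\eta_j]\Bigr] + o(\delta^2 + \varepsilon),
\]
where $\Lambda_0$ is the sum of $\mathbb{R}^N$-bubble energies, $A(\xi_j^0)$ is precisely the coefficient whose sign is forced by \eqref{condi1}, $B_j$ carries the logarithmic contribution of the exponent perturbation, and $Q(\xi_j^0)$ is proportional to the Hessian of $a$ at $\xi_j^0$. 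Since the $\xi_j^0$ are pairwise distinct, bubble-bubble interactions are of strictly higher order and $\mathcal{J}_\varepsilon$ decouples into $k$ independent local problems. Condition \eqref{condi1} then produces a critical $\delta_j \sim c\sqrt{\varepsilon}$, the non-degeneracy of $\xi_j^0$ as a critical point of $a$ gives a non-degenerate critical $\eta_j = o(1)$, and a Brouwer degree or min-max argument yields the desired critical point of $\mathcal{J}_\varepsilon$; the corresponding pair $(u_\varepsilon, v_\varepsilon) = (\mathcal{W} + \phi_1, \mathcal{Z} + \phi_2)$ is then the claimed solution.

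The main obstacle is the sharp expansion of $\mathcal{J}_\varepsilon$ together with the $C^0$-estimate of $(\phi_1,\phi_2)$ across the three regimes of \eqref{condi}. The Hamiltonian nature of the system forces asymmetric weighted $L^{(p+1)/p}$ and $L^{(q+1)/q}$ bounds on $\phi_1$ and $\phi_2$, and the admissible decay of the bubble changes as $p$ crosses $N/(N-2)$ and reaches $(N+2)/(N-2)$. In the critical case $p=(N+2)/(N-2)$ the $h$-contribution enters at the same order as the scalar curvature, and one needs $N\geq 10$ so that the sub-leading cross-interactions are absorbed; in the strictly supercritical and subcritical-in-$p$ regimes the threshold $N\geq 8$ is exactly what makes one of the Green-function-type convolutions integrable. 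Tracking these remainders, and verifying that they remain $o(\delta^2 + \varepsilon)$ after optimising in $\delta$, is the technical heart of the argument.
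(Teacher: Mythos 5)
Your proposal is correct and follows essentially the same route as the paper: a Lyapunov--Schmidt reduction in the dual space $\dot W^{1,p^*}\times\dot W^{1,q^*}$, with the approximate kernel built from the Frank--Kim--Pistoia non-degeneracy result, invertibility of the linearization proved by a blow-up contradiction, and a $C^1$-expansion of the reduced energy whose leading terms involve exactly the combination of $Scal_g$, $\Delta_g a/a$, $h$ and $D^2_g a[\eta,\eta]$ appearing in \eqref{condi1}. The only cosmetic difference is that the paper fixes $\delta_j=\sqrt{\varepsilon t_j}$ from the outset and locates a non-degenerate critical point of the limiting reduced functional in $(\bar t,\bar\eta)$, rather than optimizing $\delta_j$ a posteriori via a degree argument, but this is the same mechanism.
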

In particular, Theorem \ref{th} applies to the case $a=\omega^N$, where $\omega$ is the warping function. For any $j=1,2,\cdots,k$, let $\Gamma_j:=\{\xi^0_j\}\times \mathcal{K}$, and
\begin{equation*}
  \Sigma_\mathfrak{g}(\Gamma_j):=\Big(L_2-\frac{L_4}{p+1}-\frac{L_5}{q+1}\Big)\frac{Scal_g(\xi^0_j)}{6NL_3} -
  \Big(L_2-\frac{L_4}{p+1}-\frac{L_5}{q+1}\Big) \frac{\Delta_\mathfrak{g} \omega(\xi^0_j)}{2L_3\omega(\xi^0_j)}.
\end{equation*}
If $\xi_j^0$ is a critical point of $\omega(x)$, then $\Gamma_1\times\Gamma_2\times\cdots\times \Gamma_k$ is a $m$-dimensional minimal submanifold of $\mathfrak{M}^k$.
Moreover, by \eqref{lap} and Theorem \ref{th}, we immediately have the following result.
\begin{theorem}\label{th1}
For any given integer $k\geq2$, set $\bar{\xi^0}=(\xi_1^0,\xi_2^0,\cdots, \xi_k^0)\in \mathcal{M}^k$, let $\xi_j^0$ be a non-degenerate critical point of $\omega(x)$, if $h$ is invariant with respect to $\mathcal{K}$ and $h(\Gamma_j)> \Sigma_\mathfrak{g}(\Gamma_j)$, $j=1,2,\cdots,k$. Assume that one of the condition \eqref{condi} holds, then for any $\varepsilon>0$ small enough, system \eqref{pro} admits a family of solutions $(u_\varepsilon,v_\varepsilon)$, invariant with respect to $\mathcal{K}$, which blows up and concentrates along $\Gamma_1\times\Gamma_2\times\cdots\times \Gamma_k$ as $\varepsilon\rightarrow0$.
\end{theorem}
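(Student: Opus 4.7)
The plan is to deduce Theorem \ref{th1} as a direct consequence of Theorem \ref{th} by exploiting the warped product reduction already outlined in the introduction. First I would impose the ansatz $(u(x,y),v(x,y)) = (u_1(x),v_1(x))$, i.e.\ look for solutions of \eqref{pro} that are invariant with respect to $\mathcal{K}$. Since $h$ is assumed to be $\mathcal{K}$-invariant, the formula \eqref{lap} for the Laplace--Beltrami operator on the warped product $\mathfrak{M} = \mathcal{M}\times_\omega \mathcal{K}$ shows that \eqref{pro} is equivalent to \eqref{pro2} on $(\mathcal{M},g)$, i.e.\ to system \eqref{prob} with the weight $a(x) = \omega(x)^N$.

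Next I would verify the hypotheses of Theorem \ref{th} for this choice of $a$ at each $\xi_j^0$. Since $\omega>0$, the point $\xi_j^0$ is a non-degenerate critical point of $a = \omega^N$ if and only if it is a non-degenerate critical point of $\omega$; this takes care of the first hypothesis. For the condition on $h$, differentiating $a = \omega^N$ gives $\Delta_g a = N\omega^{N-1}\Delta_g \omega + N(N-1)\omega^{N-2}|\nabla_g \omega|^2$, and evaluating at the critical point $\xi_j^0$ (where $\nabla_g \omega = 0$) yields
\begin{equation*}
\frac{\Delta_g a(\xi_j^0)}{2NL_3\, a(\xi_j^0)} = \frac{\Delta_g \omega(\xi_j^0)}{2L_3\, \omega(\xi_j^0)}.
\end{equation*}
Because $\omega$ depends only on $x\in \mathcal{M}$, the formula \eqref{lap} applied to $\omega$ at $\xi_j^0$ also gives $\Delta_\mathfrak{g}\omega(\xi_j^0) = \Delta_g \omega(\xi_j^0)$. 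Plugging these two identities into the right-hand side of \eqref{condi1} yields exactly $\Sigma_\mathfrak{g}(\Gamma_j)$, so the assumption $h(\Gamma_j) > \Sigma_\mathfrak{g}(\Gamma_j)$ in Theorem \ref{th1} is equivalent to \eqref{condi1} for the reduced problem. The conditions on $p$ and $N$ in \eqref{condi} are the same for both statements.

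With all hypotheses of Theorem \ref{th} satisfied, I obtain, for all sufficiently small $\varepsilon>0$, a family of solutions $(u_{1,\varepsilon}, v_{1,\varepsilon})$ of \eqref{prob} (with $a = \omega^N$) blowing up and concentrating at $\bar\xi^0 = (\xi_1^0,\dots,\xi_k^0) \in \mathcal{M}^k$. Setting $(u_\varepsilon(x,y), v_\varepsilon(x,y)) := (u_{1,\varepsilon}(x), v_{1,\varepsilon}(x))$ produces a $\mathcal{K}$-invariant family of solutions of \eqref{pro}. The concentration of $(u_{1,\varepsilon},v_{1,\varepsilon})$ at the point $\xi_j^0 \in \mathcal{M}$ lifts, by the product structure, to concentration of $(u_\varepsilon,v_\varepsilon)$ along the full fiber $\Gamma_j = \{\xi_j^0\}\times \mathcal{K}$, and hence along $\Gamma_1\times\cdots\times\Gamma_k \subset \mathfrak{M}^k$.

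Finally, I would invoke the classical characterization (already noted in the introduction, and standard for warped products) that the fiber $\{\xi_0\}\times \mathcal{K}$ is a minimal submanifold of $\mathcal{M}\times_\omega \mathcal{K}$ precisely when $\xi_0$ is a critical point of the warping function $\omega$; applied to each $\xi_j^0$, this shows that $\Gamma_1\times\cdots\times\Gamma_k$ is an $m$-dimensional minimal submanifold of $\mathfrak{M}^k$, completing the proof. There is no genuine analytic obstacle here: all the substantial work---the Lyapunov--Schmidt reduction, the expansion of the reduced functional on the finite-dimensional space of bubble parameters, and the existence of a critical point producing blow-up---is already carried out in Theorem \ref{th}, and the only thing to check is the algebraic translation of \eqref{condi1} into $h(\Gamma_j) > \Sigma_\mathfrak{g}(\Gamma_j)$ that was performed in the previous paragraph.
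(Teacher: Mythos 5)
Your proposal is correct and follows exactly the route the paper intends: Theorem \ref{th1} is stated as an immediate consequence of Theorem \ref{th} via the warped product reduction with $a=\omega^N$, and you have simply filled in the algebraic verification (that non-degeneracy of $\xi_j^0$ for $\omega$ transfers to $a=\omega^N$, and that $\Delta_g a/(2NL_3 a)=\Delta_{\mathfrak g}\omega/(2L_3\omega)$ at a critical point of $\omega$, so that \eqref{condi1} becomes $h(\Gamma_j)>\Sigma_{\mathfrak g}(\Gamma_j)$) which the paper leaves implicit. No issues.
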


\begin{remark}
{\rm By \eqref{condi}, we have $L_i<+\infty$ for any $i=1,2,\cdots,7$, where $L_i$ is given in \eqref{chang}.}
\end{remark}

\begin{remark}
{\rm If $u=v$, $p=q=\frac{N+2}{N-2}$, $\alpha=\beta=1$, and $k=1$, then
Theorems \ref{th} and \ref{th1} are exactly the conclusions obtained in \cite[Theorems 1.2-1.3]{GMP}.
}
\end{remark}

\begin{remark}
{\rm Compared with the work \cite{CW}, which also considers the asymptotically critical Lane-Emden system on Riemannian manifolds, in this paper, we focus on the existence of $k$-peaks solutions concentrated along a minimal submanifold of $\mathfrak{M}^k$.}
\end{remark}

The proof of our result relies on a well known finite dimensional  Lyapunov-Schmidt reduction method, introduced in \cite{BC,FW}. The paper is organized as follows. In Section
\ref{sec2}, we introduce the framework and present some preliminary results. The proof of Theorem \ref{th} is given in Section \ref{sec3}. In Section \ref{sec4}, we perform the finite dimensional reduction, and Section \ref{sec5} is devoted to  the reduced problem. Throughout the paper, $C,C_i$, $i\in \mathbb{N}^+$ denote positive constants possibly different from line to line.

\section{The framework and preliminary results}\label{sec2}

We start with some properties of the least energy solution $(U_{1,0}(z),V_{1,0}(z))$ of \eqref{back}-\eqref{ch} in $\mathbb{R}^N$, which is given by Wang \cite{Wang} and Hulshof and Van der Vorst \cite{HV}.
\begin{lemma}\label{jian1}\cite[Theorem 2]{HV}
Assume that $p,q$ satisfies \eqref{pq} and $1<p\leq \frac{N+2}{N-2}$. If $r\rightarrow+\infty$, there hold
\begin{equation*}
 V_{1,0}(r)=O( r^{2-N}),
\end{equation*}
and
\begin{align*}
 U_{1,0}(r) =\left\{
  \begin{array}{ll}
  O( r^{2-N}),\quad &\text{if $p>\frac{N}{N-2}$;}
 \\ O( r^{2-N}\log r),\quad &\text{if $p=\frac{N}{N-2}$;}\\
 O( r^{2-(N-2)p}),\quad &\text{if $p<\frac{N}{N-2}$.}
    \end{array}
    \right.
  \end{align*}
\end{lemma}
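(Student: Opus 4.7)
The plan is to exploit the integral representation of the system together with the known $L^{p+1}, L^{q+1}$ integrability of the least energy solutions. Since $(U_{1,0}, V_{1,0})$ is a positive classical solution of \eqref{back}--\eqref{ch} on $\mathbb{R}^N$ which is radial and decays to zero at infinity (by \cite{Wang,HV}), the fundamental solution of $-\Delta$ gives the Riesz-type representations
\begin{equation*}
V_{1,0}(x) = c_N \int_{\mathbb{R}^N} \frac{U_{1,0}(y)^q}{|x-y|^{N-2}}\, dy, \qquad U_{1,0}(x) = c_N \int_{\mathbb{R}^N} \frac{V_{1,0}(y)^p}{|x-y|^{N-2}}\, dy.
\end{equation*}

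First I would handle $V_{1,0}$. Since $1 < p \leq \frac{N+2}{N-2}$, the critical hyperbola \eqref{pq} forces $q \geq \frac{N+2}{N-2}$, so $(N-2)q \geq N+2 > N$. Combining $U_{1,0} \in L^{q+1}(\mathbb{R}^N)$ from the variational setting with $U_{1,0} \in L^\infty(\mathbb{R}^N)$ from elliptic regularity (Moser iteration / standard bootstrap) yields $U_{1,0} \in L^q(\mathbb{R}^N)$. Splitting the convolution over the three regions $\{|y| \leq r/2\}$, $\{|x-y| \leq r/2\}$, and the complement with $r = |x|$, the dominant far-field contribution is bounded by $Cr^{2-N}\int U_{1,0}^q\, dy$, while the two near-field pieces are negligible because $U_{1,0}(y) \to 0$ as $|y| \to \infty$. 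This yields the claim $V_{1,0}(r) = O(r^{2-N})$.

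Next I would treat $U_{1,0}$, where the behavior depends on the integrability of $V_{1,0}^p \sim r^{(2-N)p}$ at infinity, already controlled by the first step. Inserting this into the convolution representation and computing in the three regimes, I expect: (i) if $p > \frac{N}{N-2}$, i.e.\ $(N-2)p > N$, then $\int V_{1,0}^p\, dy$ converges and the same splitting gives $U_{1,0}(r) = O(r^{2-N})$; (ii) if $p = \frac{N}{N-2}$, then $\int_{B_r} V_{1,0}^p\, dy \sim \log r$, producing the logarithmic correction $U_{1,0}(r) = O(r^{2-N}\log r)$; (iii) if $p < \frac{N}{N-2}$, then $\int_{B_r} V_{1,0}^p\, dy \sim r^{N-(N-2)p}$ and the estimate $|x-y|^{2-N} \sim r^{2-N}$ on $\{|y| \leq r/2\}$ delivers $U_{1,0}(r) = O(r^{2-N}\cdot r^{N-(N-2)p}) = O(r^{2-(N-2)p})$.

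The main obstacle I anticipate is making the bootstrap from the crude integrability $U_{1,0} \in L^{q+1} \cap L^\infty$ to sharp pointwise decay precise enough to feed into the convolution, particularly at the borderline $p = \frac{N}{N-2}$ where one must extract the logarithm exactly rather than absorbing it into a power. A cleaner alternative, which I would use as a cross-check, is ODE shooting on the radial profile: the identity $(r^{N-1}U_{1,0}')' = -r^{N-1} V_{1,0}^p$ integrates to $U_{1,0}'(r) = -r^{1-N}\int_0^r s^{N-1} V_{1,0}^p(s)\, ds$, and one recovers the three cases by reading off the asymptotic size of the right-hand side and integrating once more from $r$ to $+\infty$. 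Finally, matching lower bounds follow from the strict positivity of $U_{1,0}^q$ and $V_{1,0}^p$, so the $O(\cdot)$-statements are sharp.
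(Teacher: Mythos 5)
This lemma is not proved in the paper at all: it is quoted verbatim as \cite[Theorem 2]{HV} (Hulshof--Van der Vorst), so there is no in-paper argument to compare against. Your overall strategy --- the Riesz representation $U_{1,0}=c_N|x|^{2-N}*V_{1,0}^p$, $V_{1,0}=c_N|x|^{2-N}*U_{1,0}^q$, a three-region splitting of the convolution, and the radial identity $U_{1,0}'(r)=-r^{1-N}\int_0^r s^{N-1}V_{1,0}^p\,ds$ --- is indeed the standard route and essentially the one Hulshof--Van der Vorst follow. However, two steps as written are genuinely broken. First, the interpolation claim is backwards: $U_{1,0}\in L^{q+1}\cap L^{\infty}$ gives $U_{1,0}\in L^s$ only for $s\geq q+1$, not for $s=q<q+1$; a function behaving like $|x|^{-a}$ at infinity with $N/(q+1)<a\leq N/q$ lies in $L^{q+1}\cap L^\infty$ but not in $L^q$. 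So the integrability $\int U_{1,0}^q\,dy<\infty$, which is exactly what you need to extract $V_{1,0}=O(r^{2-N})$ from the far-field piece, is not available from the variational data. (It is true a posteriori --- the hyperbola gives $((N-2)p-2)q=N(p+1)q/(q+1)>N$ since $pq>1$ --- but that uses the very decay of $U_{1,0}$ you are trying to prove.)

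Second, and relatedly, the argument is circular as organized: the estimate for $V_{1,0}$ requires quantitative decay of $U_{1,0}$ (even your ``near-field is negligible'' step needs $\sup_{|y|\geq r/2}U_{1,0}(y)^q=O(r^{-N})$, not merely $U_{1,0}\to 0$), while the estimate for $U_{1,0}$ requires the decay of $V_{1,0}$. The missing ingredient is a starting point plus an iteration: from radial monotonicity one gets the crude rates $U_{1,0}(r)\leq Cr^{-N/(q+1)}$ and $V_{1,0}(r)\leq Cr^{-N/(p+1)}$ (since $r^NU_{1,0}(r)^{q+1}\leq C\|U_{1,0}\|_{q+1}^{q+1}$), and one must then feed these alternately into the two integral representations, improving the exponents at each step until they stabilize at $r^{2-N}$ for $V_{1,0}$ and at the three regimes for $U_{1,0}$. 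This bootstrap is the actual content of \cite[Theorem 2]{HV}; acknowledging it as ``the main obstacle'' does not discharge it. If you intend to reprove the lemma rather than cite it, you need to carry out that iteration explicitly, with particular care at the borderline $p=N/(N-2)$ where the logarithm appears.
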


\begin{lemma}\label{jian2}\cite[Lemma 2.2]{KM}
Assume that $p,q$ satisfies \eqref{pq} and $1<p\leq \frac{N+2}{N-2}$. If $r\rightarrow+\infty$, there hold
\begin{equation*}
 V'_{1,0}(r)=O( r^{1-N}),
\end{equation*}
and
\begin{align*}
 U'_{1,0}(r) =\left\{
  \begin{array}{ll}
  O( r^{1-N}),\quad &\text{if $p>\frac{N}{N-2}$;}
 \\ O( r^{1-N}\log r),\quad &\text{if $p=\frac{N}{N-2}$;}\\
 O( r^{1-(N-2)p}),\quad &\text{if $p<\frac{N}{N-2}$.}
    \end{array}
    \right.
  \end{align*}
\end{lemma}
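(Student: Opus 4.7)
The plan is to derive these derivative estimates directly from the decay of $U_{1,0}$ and $V_{1,0}$ in Lemma \ref{jian1} by integrating the radial ODE once. Since $(U_{1,0},V_{1,0})$ is radial, the system reads
\begin{equation*}
-(r^{N-1}U_{1,0}'(r))'=r^{N-1}V_{1,0}^{p}(r),\qquad -(r^{N-1}V_{1,0}'(r))'=r^{N-1}U_{1,0}^{q}(r),
\end{equation*}
with $U_{1,0}'(0)=V_{1,0}'(0)=0$. Integrating from $0$ to $r$ yields the fundamental identities
\begin{equation*}
V_{1,0}'(r)=-r^{1-N}\int_{0}^{r}s^{N-1}U_{1,0}^{q}(s)\,ds,\qquad U_{1,0}'(r)=-r^{1-N}\int_{0}^{r}s^{N-1}V_{1,0}^{p}(s)\,ds,
\end{equation*}
so the problem is reduced to controlling these two integrals as $r\to+\infty$.

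For $V_{1,0}'$, the critical hyperbola \eqref{pq} together with the standing assumption $1<p\leq \frac{N+2}{N-2}$ forces $q\geq \frac{N+2}{N-2}>\frac{N}{N-2}$, and by Lemma \ref{jian1} the strongest possible decay $U_{1,0}(s)=O(s^{2-N})$ already holds in the range $p>\frac{N}{N-2}$ (and in the other two cases $U_{1,0}$ decays at least as fast, up to logarithms or stronger polynomial rates). Consequently $s^{N-1}U_{1,0}^{q}(s)=O(s^{N-1+(2-N)q})$ with $N-1+(2-N)q<-1$, the integral $\int_{0}^{\infty}s^{N-1}U_{1,0}^{q}(s)\,ds$ converges, and we obtain $V_{1,0}'(r)=O(r^{1-N})$ as claimed.

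For $U_{1,0}'$, the argument splits along the stated trichotomy on $p$, mirroring the one used in Lemma \ref{jian1}. In the range $p>\frac{N}{N-2}$, Lemma \ref{jian1} gives $V_{1,0}(s)=O(s^{2-N})$ and $s^{N-1}V_{1,0}^{p}(s)=O(s^{N-1+(2-N)p})$ is integrable at infinity, so $U_{1,0}'(r)=O(r^{1-N})$. At the threshold $p=\frac{N}{N-2}$, the integrand is of order $s^{-1}$, producing $\int_{0}^{r}s^{N-1}V_{1,0}^{p}(s)\,ds=O(\log r)$ and therefore $U_{1,0}'(r)=O(r^{1-N}\log r)$. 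Finally, in the range $p<\frac{N}{N-2}$, the integrand $s^{N-1+(2-N)p}$ has positive exponent, the integral is of order $r^{N+(2-N)p}=r^{N-(N-2)p}$, and multiplication by $r^{1-N}$ yields $U_{1,0}'(r)=O(r^{1-(N-2)p})$.

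The only mildly delicate point is making sure the integrals are genuinely of the stated order, and not merely bounded by it. This is automatic on the upper side from Lemma \ref{jian1}; on the lower side (needed for the big-$O$ bound itself this is not required, only upper bounds matter) it ultimately rests on the sharp asymptotic $V_{1,0}(r)\sim c\,r^{2-N}$ that underlies Lemma \ref{jian1}. No further geometric input is needed; the entire argument is an ODE computation combined with the pointwise decay already recorded.
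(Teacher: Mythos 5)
The paper does not prove this lemma; it is quoted verbatim from \cite[Lemma 2.2]{KM}, so there is no internal proof to compare against. Your overall strategy --- writing $U_{1,0}'(r)=-r^{1-N}\int_0^r s^{N-1}V_{1,0}^p\,ds$ and $V_{1,0}'(r)=-r^{1-N}\int_0^r s^{N-1}U_{1,0}^q\,ds$ and then feeding in the decay rates of Lemma \ref{jian1} --- is the standard and correct route, and your treatment of $U_{1,0}'$ is essentially right (note only that in the case $p<\frac{N}{N-2}$ the exponent $N-1-(N-2)p$ need not be positive, merely $>-1$, which is all you need for $\int_0^r s^{N-1}V_{1,0}^p\,ds=O(r^{N-(N-2)p})$).

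There is, however, a genuine error in your justification of $V_{1,0}'(r)=O(r^{1-N})$ when $p\leq\frac{N}{N-2}$. You claim that in those cases $U_{1,0}$ ``decays at least as fast'' as $r^{2-N}$; the opposite is true. For $p<\frac{N}{N-2}$ Lemma \ref{jian1} gives $U_{1,0}(r)=O(r^{2-(N-2)p})$ with $2-(N-2)p>2-N$, i.e.\ strictly \emph{slower} polynomial decay, and for $p=\frac{N}{N-2}$ there is an extra logarithm. So the convergence of $\int_0^\infty s^{N-1}U_{1,0}^q\,ds$ does not follow from the case $p>\frac{N}{N-2}$ by monotonicity; it has to be checked. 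It does hold, but only because of the critical hyperbola: from \eqref{pq} one gets $q+1=\frac{N(p+1)}{(N-2)p-2}$, hence
\begin{equation*}
\big((N-2)p-2\big)q=2p+N+2>N,
\end{equation*}
which is exactly the condition $N-1+\big(2-(N-2)p\big)q<-1$ needed for $s^{N-1}U_{1,0}^q(s)$ to be integrable at infinity (the logarithmic factor at $p=\frac{N}{N-2}$ is harmless since there $q>\frac{N}{N-2}$ strictly). With that computation inserted, your argument closes; without it, the case $p\leq\frac{N}{N-2}$ of the first estimate is unproved.
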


\begin{lemma}\label{nonde}\cite[Theorem 1]{FKP}
Assume that $p,q$ satisfies \eqref{pq},
set
\begin{equation*}
  (\Psi_{1,0}^1,\Phi_{1,0}^1)=\Big(z\cdot \nabla U_{1,0}+\frac{N U_{1,0}}{q+1},z\cdot \nabla V_{1,0}+\frac{N V_{1,0}}{p+1}\Big)
\end{equation*}
and
\begin{equation*}
  (\Psi_{1,0}^l,\Phi_{1,0}^l)=\big(\partial _l U_{1,0},\partial _l V_{1,0} \big),\quad \text{for $l=1,2,\cdots,N$.}
\end{equation*}
Then the space of solutions for the linear system
\begin{align*}
 \left\{
  \begin{array}{ll}
  -\Delta\Psi=pV_{1,0}^{p-1}\Phi,
  \ \  \mbox{in}\ \mathbb{R}^N,\\
   -\Delta\Phi=qU_{1,0}^{q-1}\Psi,
  \ \  \mbox{in}\ \mathbb{R}^N,\\
  (\Psi,\Phi)\in \dot{W}^{2,\frac{p+1}{p}}(\mathbb{R}^N)\times \dot{W}^{2,\frac{q+1}{q}}(\mathbb{R}^N)
    \end{array}
    \right.
  \end{align*}
  is spanned by
  \begin{equation*}
     \big\{(\Psi_{1,0}^0,\Phi_{1,0}^0), (\Psi_{1,0}^1,\Phi_{1,0}^1),\cdots, (\Psi_{1,0}^N,\Phi_{1,0}^N)\big\}.
  \end{equation*}
\end{lemma}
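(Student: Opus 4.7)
The plan is to exploit the rotational symmetry of the ground state $(U_{1,0},V_{1,0})$, which is radial and radially decreasing by the cited uniqueness results of Wang and Hulshof--Van der Vorst, and reduce the kernel computation to a family of radial ODE systems via spherical harmonic decomposition. Concretely, I would expand any solution $(\Psi,\Phi)$ of the linearized system as
\begin{equation*}
\Psi(r,\theta)=\sum_{k\geq 0}\psi_{k}(r)Y_{k}(\theta),\qquad \Phi(r,\theta)=\sum_{k\geq 0}\phi_{k}(r)Y_{k}(\theta),
\end{equation*}
where $\{Y_{k}\}$ ranges over an orthonormal basis of spherical harmonics of degree $k$ on $S^{N-1}$ with eigenvalue $\lambda_{k}=k(k+N-2)$. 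Each pair $(\psi_{k},\phi_{k})$ then solves a coupled second-order radial system on $(0,\infty)$ with centrifugal potential $\lambda_{k}/r^{2}$, and the integrability condition $(\Psi,\Phi)\in\dot{W}^{2,(p+1)/p}\times\dot{W}^{2,(q+1)/q}$ translates into decay conditions at $0$ and $\infty$ compatible with the asymptotics in Lemmas \ref{jian1}--\ref{jian2}.

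I would then handle the modes separately. For $k=0$, the scaling family $(\lambda^{N/(q+1)}U_{1,0}(\lambda\cdot),\lambda^{N/(p+1)}V_{1,0}(\lambda\cdot))$ differentiated at $\lambda=1$ produces exactly $(\Psi_{1,0}^{0},\Phi_{1,0}^{0})$ as a kernel element; to show it is the only radial element in the admissible space I would convert the $2\times 2$ coupled ODE to a single fourth-order Euler-type equation in $r$, determine its four Frobenius indices at $r=0$ and its four decay rates at infinity (exactly two of which are admissible on each end), and count dimensions. For $k=1$, translation invariance of the nonlinear system produces the $N$ elements $(\partial_{l}U_{1,0},\partial_{l}V_{1,0})$, all of which have degree-one angular dependence; the same ODE-theoretic dimension count, now with the centrifugal shift at $k=1$, shows that no other degree-one profile is admissible.

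The main obstacle is ruling out nontrivial solutions for $k\geq 2$. The strategy I would use is to exhibit a strictly positive quadratic form on radial pairs that vanishes precisely on the $k=0$ and $k=1$ modes. Starting from the $k=1$ equation satisfied by $(U_{1,0}',V_{1,0}')$, I would multiply the $(\psi_{k},\phi_{k})$ system by the appropriate test pair, integrate against the weight $r^{N-1}$, and compare the resulting boundary terms using the decay rates from Lemmas \ref{jian1}--\ref{jian2}. The difference $\lambda_{k}-\lambda_{1}=k(k+N-2)-(N-1)>0$ for $k\geq 2$ should produce a coercive term that forces $(\psi_{k},\phi_{k})\equiv 0$. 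The delicate part here is handling the mixed asymptotics of $U_{1,0}$ depending on whether $p$ is above, equal to, or below $N/(N-2)$, since these change the integrability of the boundary contributions; an alternative that sidesteps this issue is the Emden--Fowler substitution $r=e^{t}$, which turns the coupled system into an autonomous system on $\mathbb{R}$ and allows Fourier/Sturm arguments uniformly in $k$.
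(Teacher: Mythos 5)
The paper does not prove this lemma at all: it is quoted verbatim as \cite[Theorem 1]{FKP} (Frank--Kim--Pistoia), so there is no internal proof to compare against. Your sketch is, in outline, the strategy that the cited proof actually follows -- spherical harmonic decomposition, identification of the $k=0$ mode with the dilation derivative and the $k=1$ mode with the translation derivatives, and exclusion of the modes $k\geq 2$ by a coercivity argument driven by $\lambda_k-\lambda_1>0$ together with the positivity of $(-U_{1,0}',-V_{1,0}')$. So the overall architecture is sound, and your remark about the Emden--Fowler substitution $r=e^t$ is exactly the device used to tame the $p$-dependent asymptotics of $U_{1,0}$.

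There is, however, a genuine gap in the way you propose to close the $k=0$ and $k=1$ modes. Converting the coupled radial system to a fourth-order equation and ``counting Frobenius indices'' only tells you that the space of solutions regular at the origin is $2$-dimensional and the space of solutions with admissible decay at infinity is $2$-dimensional inside a $4$-dimensional solution space. Their intersection is therefore \emph{at least} $1$-dimensional (it contains the known kernel element), but nothing in a pure dimension count prevents it from being $2$-dimensional; indeed this intersection being exactly $1$-dimensional is precisely the content of radial non-degeneracy, which is the hardest part of the theorem because $(U_{1,0},V_{1,0})$ is not explicit. One must actually rule out a second independent admissible solution, e.g.\ by a Wronskian/Sturm comparison against the sign-definite solution $(-U_{1,0}',-V_{1,0}')$ for $k=1$, and for $k=0$ by invoking the ODE machinery underlying the uniqueness proof of Hulshof--Van der Vorst (monotone separation of the shooting trajectories in the Emden--Fowler variables). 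Without supplying that argument, the claimed uniqueness of the $k=0$ and $k=1$ profiles is asserted rather than proved; the $k\geq2$ part of your plan is the one that is essentially complete modulo the boundary-term bookkeeping you already flag.
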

Moreover, we have the following elementary inequality.
\begin{lemma}\cite[Lemma 2.1]{LN}\label{gs}
For any $a>0$, $b$ real, there holds
\begin{align*}
 \big||a+b|^\beta-b^\beta\big|\leq\left\{
  \begin{array}{ll}
  C(\beta)(a^{\beta-1}|b|+|b|^\beta),\quad &\text{if $\beta\geq1$},\\
  C(\beta)\min\big\{a^{\beta-1}|b|,|b|^\beta\big\},\quad &\text{if $0<\beta<1$}.
    \end{array}
    \right.
  \end{align*}
\end{lemma}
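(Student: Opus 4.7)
The statement is the standard Lin--Ni type elementary inequality, for which the plan is to reduce to a direct calculus computation via the mean value theorem, splitting cases according to whether $|b|$ is small or large compared to $a$. Reading $b^\beta$ on the left-hand side as $a^\beta$ (the standard Lin--Ni form, to which the inequality must reduce since otherwise taking $b=0$ with $\beta>1$ would immediately fail), the task amounts to measuring how much $|t|^\beta$ can shift when $t$ is moved by an amount of order $|b|$ starting from $t=a>0$.

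For $\beta \geq 1$, first suppose $|b| \leq a$. Then the segment from $a$ to $a+b$ lies inside $[0, 2a]$, so on this segment the derivative $\beta |t|^{\beta-2} t$ is bounded in absolute value by $\beta(2a)^{\beta-1}$, and the mean value theorem yields $\bigl||a+b|^\beta - a^\beta\bigr| \leq C(\beta) a^{\beta-1}|b|$. In the complementary range $|b| > a$, I would simply estimate $\bigl||a+b|^\beta - a^\beta\bigr| \leq |a+b|^\beta + a^\beta \leq (a+|b|)^\beta + a^\beta \leq C(\beta)|b|^\beta$. Either way the right-hand side is at most $C(\beta)\bigl(a^{\beta-1}|b| + |b|^\beta\bigr)$, which is the desired bound.

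For $0 < \beta < 1$, I would produce the two quantities appearing in the minimum separately and conclude. The global $\beta$-Hölder continuity of $t \mapsto |t|^\beta$ on $\R$, equivalently the subadditivity $(x+y)^\beta \leq x^\beta + y^\beta$ for $x, y \geq 0$, yields $\bigl||a+b|^\beta - a^\beta\bigr| \leq |b|^\beta$ immediately. For the companion bound $a^{\beta-1}|b|$, I would split at $|b| = a/2$: when $|b| \leq a/2$, the interval from $a$ to $a+b$ stays in $[a/2, 3a/2]$, and since $|t|^{\beta-1}$ is decreasing in $|t|$ for $\beta < 1$, the mean value theorem gives a bound by $\beta(a/2)^{\beta-1}|b| \leq C(\beta) a^{\beta-1}|b|$; when $|b| > a/2$, the inequality $a^{\beta-1}|b| \geq C(\beta)|b|^\beta$ (using $\beta-1 < 0$) combined with the Hölder bound already closes the case.

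There is no real conceptual obstacle; the most delicate point is the regime $0 < \beta < 1$ with $a+b$ close to zero, where the mean value theorem is not directly available because $(|t|^\beta)'$ blows up near the origin, and one must fall back on the $\beta$-Hölder bound rather than the MVT estimate. Once the dichotomy $|b| \lessgtr a$ is arranged cleanly, each subcase reduces to a one-line calculation, and the two halves $\beta \geq 1$ and $0 < \beta < 1$ are handled by essentially the same template with only the monotonicity direction of $|t|^{\beta-1}$ reversed.
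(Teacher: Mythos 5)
The paper offers no proof of this lemma at all --- it is quoted verbatim as \cite[Lemma 2.1]{LN} and used as a black box --- so there is nothing internal to compare your argument against. Your proof is correct and complete: you rightly observe that the left-hand side must read $\bigl||a+b|^\beta-a^\beta\bigr|$ (as in Li--Nirenberg; the statement as transcribed fails at $b=0$ for $\beta\geq1$), and both halves of your case analysis --- the mean value theorem on the regime where $|b|$ is dominated by $a$, the crude triangle/subadditivity bound otherwise, and the exchange $|b|^\beta\leq C(\beta)a^{\beta-1}|b|$ when $|b|>a/2$ and $\beta<1$ --- are the standard and correct way to establish the inequality.
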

Now, we recall some definitions and results about the compact Riemannian manifold $(\mathcal{M},g)$.
\begin{definition}
{\rm Let $(\mathcal{M},g)$ be a smooth compact Riemannian manifold. On the tangent bundle of $\mathcal{M}$, define the exponential map $\exp: T \mathcal{M}\rightarrow \mathcal{M}$, which has the following properties:

(i) $\exp$ is of  class $C^\infty$;

(ii) there exists a constant $r_0>0$ such that $\exp_\xi|_{B(0,r_0)}\rightarrow B_g(\xi,r_0)$ is a diffeomorphism for all $\xi\in \mathcal{M}$.
}
\end{definition}

Fix such $r_0$ in this paper with $r_0<\min\big\{i_g,\min\limits_{j\neq m}\{d_g(\xi^0_j,\xi^0_m)\}\big\}$, where $i_g$ denotes the injectivity radius of $(\mathcal{M},g)$. For
 any $1<s<+\infty$ and $u\in L^s(\mathcal{M})$, we denote the $L^s$-norm of $u$ by $$ \|u\|_s=\Big(\int\limits_{\mathcal{M}}|u|^sd v_g\Big)^{1/{s}},$$
where $d v_g=\sqrt{\det g}dz$ is the volume element on $\mathcal{M}$ associated to the metric $g$.
We
introduce the Banach space
\begin{equation*}
  \mathcal{X}_{p,q}(\mathcal{M})=\dot{W}^{1,p^*}(\mathcal{M})\times \dot{W}^{1,q^*}(\mathcal{M})
\end{equation*}
equipped with the norm
\begin{equation}\label{norm}
  \|(u,v)\|=\Big(\int\limits_{\mathcal{M}}a(x)|\nabla_g u|^{p^*}dv_g\Big)^{1/{p^*}}+\Big(\int\limits_{\mathcal{M}}a(x)|\nabla_g v|^{q^*}dv_g\Big)^{1/{q^*}},
\end{equation}
where
\begin{equation*}
  \frac{1}{p^*}=\frac{p}{p+1}-\frac{1}{N}=\frac{1}{q+1}+\frac{1}{N},\quad \frac{1}{q^*}=\frac{q}{q+1}-\frac{1}{N}=\frac{1}{p+1}+\frac{1}{N}.
\end{equation*}
Denote by $\mathcal{I}^*$ the formal adjoint operator of the embedding $\mathcal{I}:\mathcal{X}_{q,p}(\mathcal{M})\hookrightarrow L^{p+1}(\mathcal{M})\times L^{q+1}(\mathcal{M})$. By the Calder\'{o}n-Zygmund estimate, the operator $\mathcal{I}^*$ maps $L^{\frac{p+1}{p}}(\mathcal{M})\times L^{\frac{q+1}{q}}(\mathcal{M})$ to $\mathcal{X}_{p,q}(\mathcal{M})$. Then we rewrite problem \eqref{prob} as
\begin{equation}\label{repro}
  (u,v)=\mathcal{I}^*\big(a(x)f_\varepsilon(v),a(x)g_\varepsilon(u)\big).
\end{equation}
where $f_\varepsilon(u):=u_+^{p-\alpha\varepsilon}$, $g_\varepsilon(u):=u_+^{q-\beta\varepsilon}$ and $u_+=\max \{u,0\}$.
Moreover, by the Sobolev embedding theorem, we have
\begin{equation}\label{em}
  \|\mathcal{I}^*\big(a(x)f_\varepsilon(v),a(x)g_\varepsilon(u)\big)\|\leq C\|a(x)f_\varepsilon(v)\|_{{\frac{p+1}{p}}}+
  C\|a(x)g_\varepsilon(u)\|_{{\frac{q+1}{q}}}.
\end{equation}

Let $\chi$ be a smooth cutoff function such that $0\leq \chi\leq1$ in $\mathbb{R}^N$, $\chi(z)=1$ if  $z\in B(0,r_0/2)$, and $\chi(z)=0$ if $z\in \mathbb{R}^N\backslash B(0,r_0)$. For any $\xi\in \mathcal{M}$, $\delta>0$ and $\eta\in \mathbb{R}^N$, we define the following functions on $\mathcal{M}$
\begin{equation}\label{fun1}
  \big(W_{\delta,\xi,\eta}(x),H_{\delta,\xi,\eta}(x)\big):=\big(\chi(d_g(x,\xi))\delta^{-\frac{N}{q+1}}U_{1,0}(\delta^{-1}\exp_\xi^{-1}(x)-\eta), \chi(d_g(x,\xi))\delta^{-\frac{N}{p+1}}V_{1,0}(\delta^{-1}\exp_\xi^{-1}(x)-\eta)\big)
\end{equation}
and
\begin{equation*}
  \big(\Psi^i_{\delta,\xi,\eta}(x),\Phi^i_{\delta,\xi,\eta}(x)\big):=\big(\chi(d_g(x,\xi))\delta^{-\frac{N}{q+1}}\Psi_{1,0}^i(\delta^{-1}\exp_\xi^{-1}(x)-\eta), \chi(d_g(x,\xi))\delta^{-\frac{N}{p+1}}\Phi_{1,0}^i(\delta^{-1}\exp_\xi^{-1}(x)-\eta)\big),
\end{equation*}
for $i=0,1,\cdots, N$,
where $(\Psi_{1,0}^i,\Phi_{1,0}^i)$ is given in Lemma \ref{nonde}.

For any $\varepsilon>0$ and $\bar{t}=(t_1,t_2,\cdots,t_k)\in (\mathbb{R}^+)^k$, we set
\begin{equation}\label{solu'}
  \bar{\delta}=(\delta_1,\delta_2,\cdots,\delta_k)\in (\mathbb{R}^+)^k,\quad \delta_j=\sqrt{\varepsilon t_j},\quad \varrho_1<t_j<\frac{1}{\varrho_1},\quad \bar{\eta}=(\eta_1,\eta_2,\cdots,\eta_k)\in (\mathbb{R}^N)^k
\end{equation}
for fixed small $\varrho_1>0$.
Let $\mathcal{Y}_{\bar{\delta},\bar{\xi^0},\bar{\eta}}$ and $\mathcal{Z}_{\bar{\delta},\bar{\xi^0},\bar{\eta}}$ be two subspaces of $\mathcal{X}_{p,q}(\mathcal{M})$ given as
\begin{equation*}
  \mathcal{Y}_{\bar{\delta},\bar{\xi^0},\bar{\eta}}=span\Big\{\big(\Psi^i_{\delta_j,\xi^0_j,\eta_j},\Phi^i_{\delta_j,\xi^0_j,\eta_j}\big):i=0,1,\cdots, N \,\, \text{and}\,\, j=1,2,\cdots,k\Big\}
\end{equation*}
and
\begin{equation*}
  \mathcal{Z}_{\bar{\delta},\bar{\xi^0},\bar{\eta}}=\Big\{(\Psi,\Phi)\in \mathcal{X}_{p,q}(\mathcal{M}): \big\langle(\Psi,\Phi),\big(\Psi^i_{\delta_j,\xi^0_j,\eta_j},\Phi^i_{\delta_j,\xi^0_j,\eta_j}\big)\big\rangle_h=0 \,\, \text{ for $i=0,1,\cdots, N$ and $j=1,2,\cdots,k$} \Big\},
\end{equation*}
where
\begin{equation*}
  \langle(u,v),(\varphi,\psi)\rangle_h=\int\limits_{\mathcal{M}}a(x)(\nabla _g u \cdot \nabla _g \psi+\nabla _g v \cdot \nabla _g \varphi )d v_g+\int\limits_{\mathcal{M}}a(x)(hu \psi +hv \varphi)dv_g
\end{equation*}
for any $(u,v),(\varphi,\psi)\in \mathcal{X}_{p,q}(\mathcal{M})$.

\begin{lemma}\label{topo}
There exists $\varepsilon_0>0$ such that for any $\varepsilon\in (0,\varepsilon_0)$, $\mathcal{X}_{p,q}(\mathcal{M})=\mathcal{Y}_{\bar{\delta},\bar{\xi^0},\bar{\eta}}\oplus \mathcal{Z}_{\bar{\delta},\bar{\xi^0},\bar{\eta}}$.
\end{lemma}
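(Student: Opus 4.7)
The plan is to reduce the direct sum decomposition to the invertibility, for $\varepsilon$ sufficiently small, of the $k(N{+}1)\times k(N{+}1)$ Gram matrix
\[
M_\varepsilon^{(i,j),(l,m)}:=\bigl\langle\bigl(\Psi^i_{\delta_j,\xi^0_j,\eta_j},\Phi^i_{\delta_j,\xi^0_j,\eta_j}\bigr),\bigl(\Psi^l_{\delta_m,\xi^0_m,\eta_m},\Phi^l_{\delta_m,\xi^0_m,\eta_m}\bigr)\bigr\rangle_h
\]
of the generators of $\mathcal{Y}_{\bar{\delta},\bar{\xi^0},\bar{\eta}}$. Once $M_\varepsilon$ is invertible, given $(u,v)\in\mathcal{X}_{p,q}(\mathcal{M})$ one can solve uniquely for coefficients $c^i_j$ that force
$\bigl\langle (u,v)-\sum_{i,j}c^i_j(\Psi^i_{\delta_j,\xi^0_j,\eta_j},\Phi^i_{\delta_j,\xi^0_j,\eta_j}),(\Psi^l_{\delta_m,\xi^0_m,\eta_m},\Phi^l_{\delta_m,\xi^0_m,\eta_m})\bigr\rangle_h=0$ for every $(l,m)$. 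This produces the splitting $(u,v)=Y+Z$ with $Y\in\mathcal{Y}_{\bar{\delta},\bar{\xi^0},\bar{\eta}}$ and $Z\in\mathcal{Z}_{\bar{\delta},\bar{\xi^0},\bar{\eta}}$; conversely any element of $\mathcal{Y}_{\bar{\delta},\bar{\xi^0},\bar{\eta}}\cap\mathcal{Z}_{\bar{\delta},\bar{\xi^0},\bar{\eta}}$ furnishes a vector in $\ker M_\varepsilon=\{0\}$ and hence vanishes.

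To analyze $M_\varepsilon$ I would localize each entry to the ball $B_g(\xi^0_j,r_0)$, pass to the rescaled variable $z=\delta_j^{-1}\exp_{\xi^0_j}^{-1}(x)-\eta_j$, and expand the metric in normal coordinates using $\sqrt{\det g}=1+O(\delta_j^2|z+\eta_j|^2)$ and $g^{ab}=\delta^{ab}+O(\delta_j^2|z+\eta_j|^2)$. Using the pointwise decay in Lemmas \ref{jian1}--\ref{jian2}, the contribution of the zero-order term $\int a h(\Psi^i\Phi^l+\Phi^i\Psi^l)\,dv_g$ and of the terms arising from $\nabla\chi$ is $O(\varepsilon)$, so the diagonal block $j=m$ converges as $\varepsilon\to 0$ to
\[
a(\xi^0_j)\int_{\mathbb{R}^N}\bigl(\nabla\Psi^i_{1,0}\cdot\nabla\Phi^l_{1,0}+\nabla\Phi^i_{1,0}\cdot\nabla\Psi^l_{1,0}\bigr)\,dz.
\]

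By the linearized system of Lemma \ref{nonde} and integration by parts, this quantity equals $p\int_{\mathbb{R}^N}V_{1,0}^{p-1}\Phi^i_{1,0}\Phi^l_{1,0}\,dz+q\int_{\mathbb{R}^N}U_{1,0}^{q-1}\Psi^i_{1,0}\Psi^l_{1,0}\,dz$, which is the matrix of a positive semi-definite quadratic form on $\mathbb{R}^{N+1}$. A vector $(c_i)$ in its kernel satisfies $\sum_i c_i(\Psi^i_{1,0},\Phi^i_{1,0})\equiv(0,0)$ on the set where $U_{1,0}, V_{1,0}>0$, that is, everywhere; the linear independence of the $N{+}1$ kernel elements asserted in Lemma \ref{nonde} then forces $c=0$. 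Hence each diagonal block is strictly positive definite.

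The principal obstacle is controlling the off-diagonal blocks $j\neq m$. Although the cutoff supports $B_g(\xi^0_j,r_0)$ and $B_g(\xi^0_m,r_0)$ may overlap, on such an overlap the rescaled bubble centered at $\xi^0_m$ contributes a factor of order $\delta_m^{N/(p+1)}d_g(\xi^0_j,\xi^0_m)^{2-N}=O(\varepsilon^{N/(2(p+1))})$ (and analogously $\delta_m^{N/(q+1)}d_g^{2-N}$) by Lemmas \ref{jian1}--\ref{jian2}, so each off-diagonal entry is $O(\varepsilon^{\gamma})$ for some $\gamma>0$. Therefore $M_\varepsilon$ is a small perturbation of a block-diagonal, strictly positive-definite matrix, and is invertible for all $\varepsilon\in(0,\varepsilon_0)$ with $\varepsilon_0$ sufficiently small, yielding the claimed topological decomposition.
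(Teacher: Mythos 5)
Your proposal is correct and follows essentially the same route as the paper: reduce the splitting to the invertibility of the Gram matrix of the generators under $\langle\cdot,\cdot\rangle_h$, rescale to show the diagonal blocks converge to the positive-definite form $p\int V_{1,0}^{p-1}\Phi^i_{1,0}\Phi^l_{1,0}+q\int U_{1,0}^{q-1}\Psi^i_{1,0}\Psi^l_{1,0}$ (the paper additionally uses the orthogonality $\delta_{il}$ to make the limit fully diagonal), and dispose of the cross terms between different peaks. The only cosmetic difference is that the paper treats the $j\neq m$ entries as exactly zero via $d_g(\xi^0_j,\xi^0_m)>r_0$, whereas you estimate them as $O(\varepsilon^{\gamma})$, which is if anything the more careful treatment.
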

\begin{proof}
We shall prove that for any $(\Psi,\Phi)\in \mathcal{X}_{p,q}(\mathcal{M})$, there exists unique pair $(\Psi_0,\Phi_0)\in \mathcal{Z}_{\bar{\delta},\bar{\xi^0},\bar{\eta}}$ and coefficients $c_{01},c_{02},\cdots,c_{0k}, c_{11},c_{12},\cdots,c_{1k},\cdots,c_{N1},c_{N2},\cdots,c_{Nk}$ such that
\begin{equation}\label{coe}
  (\Psi,\Phi)=(\Psi_0,\Phi_0)+\sum\limits_{l=0}^N\sum\limits_{m=1}^kc_{lm}(\Psi^l_{\delta_m,\xi^0_m,\eta_m},\Phi^l_{\delta_m,\xi^0_m,\eta_m}).
\end{equation}
The requirement that $(\Psi_0,\Phi_0)\in \mathcal{Z}_{\bar{\delta},\bar{\xi^0},\bar{\eta}}$ is equivalent to demanding
\begin{align}\label{dem}
  &\int\limits_{\mathcal{M}}\big(a(x)\nabla _g \Psi \cdot\nabla _g \Phi^i_{\delta_j,\xi^0_j,\eta_j}+a(x)\nabla _g \Phi\cdot\nabla _g \Psi^i_{\delta_j,\xi^0_j,\eta_j}+a(x)h\Psi \Phi^i_{\delta_j,\xi^0_j,\eta_j}+a(x)h\Phi \Psi^i_{\delta_j,\xi^0_j,\eta_j}\big)d v_g \nonumber\\
  =&\sum\limits_{l=0}^N\sum\limits_{m=1}^kc_{lm}\int\limits_{\mathcal{M}}\big (a(x)\nabla _g \Psi^l_{\delta_m,\xi^0_m,\eta_m} \cdot\nabla _g \Phi^i_{\delta_j,\xi^0_j,\eta_j}+a(x)\nabla _g \Phi^l_{\delta_m,\xi^0_m,\eta_m} \cdot\nabla _g \Psi^i_{\delta_j,\xi^0_j,\eta_j}\nonumber\\
  &+a(x)h\Psi^l_{\delta_m,\xi^0_m,\eta_m} \Phi^i_{\delta_j,\xi^0_j,\eta_j}+a(x)h\Phi^l_{\delta_m,\xi^0_m,\eta_m}  \Psi^i_{\delta_j,\xi^0_j,\eta_j}\big) d v_g
\end{align}
for any $i=0,1,\cdots,N$ and $j=1,2,\cdots,k$.

We estimate the integral on the right-hand side of \eqref{dem}. By standard properties of the exponential map, there exists $C>0$ such that for any $\xi\in \mathcal{M}$, $\delta>0$, $z\in B(0,r_0/\delta)$, $\eta\in \mathbb{R}^N$ and $i,j,k\in \mathbb{N}^+$, there hold
\begin{equation*}
  |g_{\delta,\xi,\eta}^{ij}(z)-Eucl^{ij}|\leq C\delta^2|z+\eta|^2,\quad \text{and}\quad |g_{\delta,\xi,\eta}^{ij}(z) (\Gamma_{\delta,\xi,\eta})_{ij}^k(z)|\leq C\delta^2|z+\eta|,
\end{equation*}
where $g_{\delta,\xi,\eta}(z)=\exp_\xi^*g(\delta z+\delta \eta)$ and $(\Gamma_{\delta,\xi,\eta})_{ij}^k$ stand for the Christoffel symbols of the metric $g_{\delta,\xi,\eta}$.
Taking into account that there holds
\begin{equation*}
  \Delta _{g_{\delta,\xi,\eta}}=g_{\delta,\xi,\eta}^{ij}\Big(\frac{\partial ^2}{\partial x_i \partial x_j}-(\Gamma_{\delta,\xi,\eta})_{ij}^k\frac{\partial }{\partial x_k}\Big),
\end{equation*}
by Lemma \ref{nonde} and $d_g(\xi_j^0,\xi_m^0)>r_0$ for any $j\neq m$,  we have
\begin{align}\label{gu1}
  &\int\limits_{\mathcal{M}}a(x)\nabla _g \Psi^l_{\delta_m,\xi^0_m,\eta_m} \cdot\nabla _g \Phi^i_{\delta_j,\xi^0_j,\eta_j}d v_g=\delta_{jm}\int\limits_{\mathcal{M}}a(x)\nabla _g \Psi^l_{\delta_j,\xi^0_j,\eta_j} \cdot\nabla _g \Phi^i_{\delta_j,\xi^0_j,\eta_j}d v_g\nonumber\\
  =&\delta_{jm}\int\limits_{B(0,r_0/\delta_j)}a_{\delta_j,\xi^0_j,\eta_j}\nabla _{g_{\delta_j,\xi^0_j,\eta_j}} (\chi_{\delta_j,\eta_j}\Psi^l_{1,0})\cdot  \nabla _{g_{\delta_j,\xi^0_j,\eta_m}} (\chi_{\delta_j,\eta_j}\Phi^i_{1,0})dz \nonumber\\
  =&p\delta_{jm}\int\limits_{B(0,r_0/{\delta_j})}a_{\delta_j,\xi^0_j,\eta_j}\chi^2_{\delta_j,\eta_j} V_{1,0}^{p-1}\Phi_{1,0}^l\Phi_{1,0}^idz+O(\delta_j^2) \nonumber \\=&p\delta_{il}\delta_{jm}\int\limits_{B(0,r_0/{\delta_j})}a_{\delta_j,\xi^0_j,\eta_j}\chi^2_{\delta_j,\eta_j} V_{1,0}^{p-1}(\Phi_{1,0}^i)^2dz+O(\delta_j^2),
\end{align}
and
\begin{align}\label{gu2}
  &\int\limits_{\mathcal{M}}a(x)h\Psi^l_{\delta_m,\xi^0_m,\eta_m} \Phi^i_{\delta_j,\xi^0_j,\eta_j}d v_g=\delta_{jm}\int\limits_{\mathcal{M}}a(x)h\Psi^l_{\delta_j,\xi^0_j,\eta_j} \Phi^i_{\delta_j,\xi^0_j,\eta_j}d v_g
  \nonumber\\=&\delta_{jm}\delta_j^2 \int\limits_{B(0,r_0/\delta_j)}a_{\delta_j,\xi^0_j,\eta_j}h_{\delta_j,\xi^0_j,\eta_j}\chi^2_{\delta_j,\eta_j} \Psi_{1,0}^l\Phi_{1,0}^idz \nonumber\\
  =&-\delta_{jm}\delta_j^2 \int\limits_{B(0,r_0/\delta_j)}a_{\delta_j,\xi^0_j,\eta_j}h_{\delta_j,\xi^0_j,\eta_j}\chi^2_{\delta_j,\eta_j}\frac{\Delta \Phi_{1,0}^l}{qU_{1,0}^{q-1}}\Phi_{1,0}^idz+o(\delta_j^2)\nonumber\\
  =&\delta_{il}  \delta_{jm}  \delta_j^2 \int\limits_{B(0,r_0/\delta_j)}a_{\delta_j,\xi^0_j,\eta_j}h_{\delta_j,\xi^0_j,\eta_j} \chi^2_{\delta_j,\eta_j} \frac{(\nabla\Phi_{1,0}^i)^2}{qU_{1,0}^{q-1}}dz+o(\delta_j^2),
\end{align}
where
 $\chi_{\delta_j,\eta_j}(z)=\chi({\delta_jz+\delta_j \eta_j})$, $a_{\delta_j,\xi^0_j,\eta_j}(z)=a(\exp_{\xi^0_j}(\delta_j z+\delta_j\eta_j))$ and $h_{\delta_j,\xi^0_j,\eta_j}(z)=h(\exp_{\xi^0_j}(\delta_j z+\delta_j\eta_j))$.
Similarly, we have
\begin{align}\label{gu3}
  \int\limits_{\mathcal{M}}a(x)\nabla _g \Phi^l_{\delta_m,\xi^0_m,\eta_m} \cdot\nabla _g \Psi^i_{\delta_j,\xi^0_j,\eta_j}d v_g=q\delta_{il}\delta_{jm}\int\limits_{B(0,r_0/{\delta_j})}a_{\delta_j,\xi^0_j,\eta_j}\chi^2_{\delta_j,\eta_j} U_{1,0}^{q-1}(\Psi_{1,0}^i)^2dz+O(\delta_j^2),
\end{align}
and
\begin{align}\label{gu4}
  \int\limits_{\mathcal{M}}a(x)h\Phi^l_{\delta_m,\xi^0_m,\eta_m} \Psi^i_{\delta_j,\xi^0_j,\eta_j}d v_g=\delta_{il}\delta_{jm}\delta_j^2  \int\limits_{B(0,r_0/\delta_j)}a_{\delta_j,\xi^0_j,\eta_j}h_{\delta_j,\xi^0_j,\eta_j} \chi^2_{\delta_j,\eta_j}\frac{(\nabla\Psi_{1,0}^i)^2}{pV_{1,0}^{p-1}}dz+o(\delta_j^2).
\end{align}
By plugging \eqref{gu1}-\eqref{gu4} into \eqref{dem}, we can see that the coefficients $c_{lm}$ are uniquely determined for $l=0,1,\cdots,N$ and $m=1,2,\cdots,k$. By virtue of \eqref{coe}, so is $(\Psi_0,\Phi_0)$.

On the other hand, $\mathcal{Y}_{\bar{\delta},\bar{\xi^0},\bar{\eta}}$ and $\mathcal{Z}_{\bar{\delta},\bar{\xi^0},\bar{\eta}}$ are clearly closed subspaces of $\mathcal{X}_{p,q}(\mathcal{M})$, Therefore, they are topological complements of each other.
\end{proof}

\section{Scheme of the proof of Theorem \ref{th}}\label{sec3}

We look for solutions of system \eqref{prob}, or equivalently of  \eqref{repro}, of the form
\begin{equation}\label{solu}
  (u_\varepsilon,v_\varepsilon)=\big(\mathcal{W}_{\bar{\delta},\bar{\xi^0},\bar{\eta}}+\Psi_{\varepsilon,\bar{t},\bar{\xi^0},\bar{\eta}}
  ,\mathcal{H}_{\bar{\delta},\bar{\xi^0},\bar{\eta}}+\Phi_{\varepsilon,\bar{t},\bar{\xi^0},\bar{\eta}}\big),
\end{equation}
with
\begin{equation*}
\mathcal{W}_{\bar{\delta},\bar{\xi^0},\bar{\eta}}=\sum\limits_{j=0}^kW_{\delta_j,\xi^0_j,\eta_j}\ \ \  \mbox{and}\ \  \mathcal{H}_{\bar{\delta},\bar{\xi^0},\bar{\eta}}=\sum\limits_{j=0}^kH_{\delta_j,\xi^0_j,\eta_j},
\end{equation*}
where $\bar{\delta}$ is as in \eqref{solu'}, $\big(W_{\delta_j,\xi^0_j,\eta_j},H_{\delta_j,\xi^0_j,\eta_j}\big)$ is as in \eqref{fun1}, and $\big(\Psi_{\varepsilon,\bar{t},\bar{\xi^0},\bar{\eta}},\Phi_{\varepsilon,\bar{t},\bar{\xi^0},\bar{\eta}}\big)\in \mathcal{Z}_{\bar{\delta},\bar{\xi^0},\bar{\eta}}$. By Lemma \ref{topo}, we know $\mathcal{X}_{p,q}(\mathcal{M})=\mathcal{Y}_{\bar{\delta},\bar{\xi^0},\bar{\eta}}\oplus \mathcal{Z}_{\bar{\delta},\bar{\xi^0},\bar{\eta}}$. Then we define the projections $\Pi_{\bar{\delta},\bar{\xi^0},\bar{\eta}}$ and $\Pi_{\bar{\delta},\bar{\xi^0},\bar{\eta}}^\bot$ of the Sobolev space $\mathcal{X}_{p,q}(\mathcal{M})$ onto $\mathcal{Y}_{\bar{\delta},\bar{\xi^0},\bar{\eta}}$ and $\mathcal{Z}_{\bar{\delta},\bar{\xi^0},\bar{\eta}}$ respectively. Therefore, we have to solve the couples of equations
\begin{equation}\label{tou1}
  \Pi_{\bar{\delta},\bar{\xi^0},\bar{\eta}}\Big[\big(\mathcal{W}_{\bar{\delta},\bar{\xi^0},\bar{\eta}}+\Psi_{\varepsilon,\bar{t},\bar{\xi^0},\bar{\eta}}
  ,\mathcal{H}_{\bar{\delta},\bar{\xi^0},\bar{\eta}}+\Phi_{\varepsilon,\bar{t},\bar{\xi^0},\bar{\eta}}\big)
  -\mathcal{I}^*\big(a(x)f_\varepsilon(\mathcal{H}_{\bar{\delta},\bar{\xi^0},\bar{\eta}}+\Phi_{\varepsilon,\bar{t},\bar{\xi^0},\bar{\eta}}),a(x)g_\varepsilon(\mathcal{W}_{\bar{\delta},\bar{\xi^0},\bar{\eta}}+\Psi_{\varepsilon,\bar{t},\bar{\xi^0},\bar{\eta}})\big)\Big]=0,
\end{equation}
and
\begin{equation}\label{tou2}
  \Pi_{\bar{\delta},\bar{\xi^0},\bar{\eta}}^\bot\Big[\big(\mathcal{W}_{\bar{\delta},\bar{\xi^0},\bar{\eta}}+\Psi_{\varepsilon,\bar{t},\bar{\xi^0},\bar{\eta}}
  ,\mathcal{H}_{\bar{\delta},\bar{\xi^0},\bar{\eta}}+\Phi_{\varepsilon,\bar{t},\bar{\xi^0},\bar{\eta}}\big)
  -\mathcal{I}^*\big(a(x)f_\varepsilon(\mathcal{H}_{\bar{\delta},\bar{\xi^0},\bar{\eta}}+\Phi_{\varepsilon,\bar{t},\bar{\xi^0},\bar{\eta}}),a(x)g_\varepsilon(\mathcal{W}_{\bar{\delta},\bar{\xi^0},\bar{\eta}}+\Psi_{\varepsilon,\bar{t},\bar{\xi^0},\bar{\eta}})\big)\Big]=0.
\end{equation}

The first step in the proof consists in solving equation \eqref{tou2}. This requires Proposition \ref{propo1} below, whose proof is postponed to Section \ref{sec4}.
\begin{proposition}\label{propo1}
Under the assumptions of Theorem \ref{th}, if $\bar{\delta}$ is as in \eqref{solu'},
equation \eqref{tou2} admits a unique solution $(\Psi_{\varepsilon,\bar{t},\bar{\xi^0},\bar{\eta}},\Phi_{\varepsilon,\bar{t},\bar{\xi^0},\bar{\eta}})$ in $\mathcal{Z}_{\bar{\delta},\bar{\xi^0},\bar{\eta}}$, which is continuously differentiable with respect to $\bar{t}$ and $\bar{\eta}$, such that
\begin{equation*}
  \|(\Psi_{\varepsilon,\bar{t},\bar{\xi^0},\bar{\eta}},\Phi_{\varepsilon,\bar{t},\bar{\xi^0},\bar{\eta}})\|\leq C\varepsilon|\log \varepsilon|.
\end{equation*}
\end{proposition}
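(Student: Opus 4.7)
The strategy is the standard Lyapunov--Schmidt reduction: I would rewrite \eqref{tou2} as a contraction fixed-point problem in $\mathcal{Z}_{\bar{\delta},\bar{\xi^0},\bar{\eta}}$. Abbreviating $\mathcal{W}=\mathcal{W}_{\bar{\delta},\bar{\xi^0},\bar{\eta}}$ and $\mathcal{H}=\mathcal{H}_{\bar{\delta},\bar{\xi^0},\bar{\eta}}$, define the projected linearization at the ansatz
\begin{equation*}
\mathcal{L}(\Psi,\Phi):=\Pi^\bot_{\bar{\delta},\bar{\xi^0},\bar{\eta}}\bigl[(\Psi,\Phi)-\mathcal{I}^*\bigl(a(x)\,p\,\mathcal{H}^{p-1}\Phi,\;a(x)\,q\,\mathcal{W}^{q-1}\Psi\bigr)\bigr],
\end{equation*}
the projected ansatz residual $\mathcal{R}:=\Pi^\bot_{\bar{\delta},\bar{\xi^0},\bar{\eta}}\bigl[\mathcal{I}^*(a f_\varepsilon(\mathcal{H}),a g_\varepsilon(\mathcal{W}))-(\mathcal{W},\mathcal{H})\bigr]$, and the superlinear remainder $\mathcal{N}(\Psi,\Phi)$ collecting all higher-order contributions in $(\Psi,\Phi)$. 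Then \eqref{tou2} is equivalent to $\mathcal{L}(\Psi,\Phi)=\mathcal{R}+\mathcal{N}(\Psi,\Phi)$, and I would apply Banach's fixed-point theorem to $T:=\mathcal{L}^{-1}(\mathcal{R}+\mathcal{N})$ on the closed ball of radius $C_0\,\varepsilon|\log\varepsilon|$.

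The first ingredient is uniform invertibility of $\mathcal{L}$ on $\mathcal{Z}_{\bar{\delta},\bar{\xi^0},\bar{\eta}}$ with $\|\mathcal{L}^{-1}\|\le C$ independent of $\varepsilon$. I would argue by contradiction: if $(\Psi_n,\Phi_n)\in \mathcal{Z}$ satisfies $\|(\Psi_n,\Phi_n)\|=1$ and $\|\mathcal{L}(\Psi_n,\Phi_n)\|\to 0$, pulling back through the exponential chart at each $\xi_j^0$ at scale $\delta_j^{\varepsilon_n}$ produces a rescaled sequence converging weakly in $\dot W^{1,p^*}\times \dot W^{1,q^*}$ to a bounded solution of the limit Lane--Emden linearization on $\mathbb{R}^N$. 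By Lemma \ref{nonde}, this limit is a linear combination of the $(\Psi_{1,0}^i,\Phi_{1,0}^i)$, and the orthogonality conditions defining $\mathcal{Z}$ pass to the limit and force all coefficients to vanish at every bubble. A separate compactness argument on $\mathcal{M}\setminus\bigcup_j B_g(\xi_j^0,r_0/2)$ rules out mass escaping to the far region, yielding $\|(\Psi_n,\Phi_n)\|\to 0$, a contradiction.

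The second ingredient is the error estimate $\|\mathcal{R}\|\le C\varepsilon|\log\varepsilon|$. By \eqref{em} it suffices to bound the $\mathcal{I}^*$-arguments in $L^{(p+1)/p}\times L^{(q+1)/q}$. Expanding $a f_\varepsilon(\mathcal{H})+\mathrm{div}_g(a\nabla_g\mathcal{W})-ah\mathcal{W}$ (and the symmetric expression) produces four kinds of contributions: the metric-curvature correction of order $\delta^2=O(\varepsilon)$ coming from the expansions of $g_{\delta,\xi,\eta}$ already used in the proof of Lemma \ref{topo}; the potential term $a(x)h\mathcal{W}$ of order $\delta^2$; the cutoff error, which is exponentially small; and, most importantly, the subcritical perturbation $f_\varepsilon(\mathcal{H})-\mathcal{H}^{p}=-\alpha\varepsilon\,\mathcal{H}^{p}\log\mathcal{H}+O(\varepsilon^2\log^2\mathcal{H})$, whose dual norm yields the $\varepsilon|\log\varepsilon|$ factor. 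The logarithm arises because $V_{1,0}^{p+1}|\log V_{1,0}|$ integrated over the rescaled ball $B(0,r_0/\delta)$ picks up a $|\log\delta|=O(|\log\varepsilon|)$; the decay estimates in Lemmas \ref{jian1}--\ref{jian2} guarantee the relevant integrals are finite. The third ingredient is the Lipschitz bound $\|\mathcal{N}(\Psi,\Phi)-\mathcal{N}(\Psi',\Phi')\|\le o(1)\|(\Psi-\Psi',\Phi-\Phi')\|$ on the ball of radius $O(\varepsilon|\log\varepsilon|)$, with $\mathcal{N}(0,0)=0$; this follows by applying Lemma \ref{gs} to $f_\varepsilon(\mathcal{H}+\Phi)-f_\varepsilon(\mathcal{H})-p\mathcal{H}^{p-1}\Phi$ (and the analogue), H\"older's inequality, and \eqref{em}. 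Combining the three ingredients yields a unique fixed point with $\|(\Psi,\Phi)\|\le C\|\mathcal{R}\|\le C\varepsilon|\log\varepsilon|$, and continuous differentiability in $(\bar t,\bar\eta)$ follows from the implicit function theorem applied to the $C^1$ map $(\bar t,\bar\eta,\Psi,\Phi)\mapsto\mathcal{L}(\Psi,\Phi)-\mathcal{R}-\mathcal{N}(\Psi,\Phi)$ at the fixed point.

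The principal obstacle is the uniform invertibility of $\mathcal{L}$: the blow-up must be carried out simultaneously at all $k$ concentration points in the asymmetric product space $\dot W^{1,p^*}\times \dot W^{1,q^*}$, and since \eqref{condi} permits the range $p<N/(N-2)$ the components $U_{1,0}$ and $V_{1,0}$ have genuinely different decay rates (Lemma \ref{jian1}). Matching these rates with the correct dual norms $L^{(p+1)/p}\times L^{(q+1)/q}$ through \eqref{em} is what forces the dimensional restrictions $N\ge 8$ (resp.\ $N\ge 10$ when $p=(N+2)/(N-2)$) assumed in \eqref{condi}, and is the most delicate part of the argument.
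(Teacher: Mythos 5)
Your proposal is correct and follows essentially the same route as the paper: a contradiction/blow-up argument for the uniform invertibility of the projected linearized operator (using the non-degeneracy result of Lemma \ref{nonde} and a far-region estimate to preclude loss of mass), the estimate $\|\mathcal{R}_{\varepsilon,\bar t,\bar{\xi^0},\bar\eta}\|\le C\varepsilon|\log\varepsilon|$ driven by the subcritical perturbation $f_\varepsilon(\mathcal{H})-\mathcal{H}^p$, and a contraction argument with the implicit function theorem for the $C^1$ dependence on $(\bar t,\bar\eta)$ (the paper delegates this last step to \cite[Proposition 3.1]{CW}). The only cosmetic difference is that the paper linearizes with $f'_\varepsilon,g'_\varepsilon$ rather than the unperturbed powers $p\mathcal{H}^{p-1},q\mathcal{W}^{q-1}$, which merely shifts an $O(\varepsilon|\log\varepsilon|)$ term between $\mathcal{L}$ and $\mathcal{N}$.
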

We now introduce the energy functional $\mathcal{J}_\varepsilon$ defined on $\mathcal{X}_{p,q}(\mathcal{M})$ by
\begin{align*}
  \mathcal{J}_\varepsilon(u,v)=&\int\limits_{\mathcal{M}}a(x)\nabla _g u\cdot \nabla _g vd v_g+\int\limits_{\mathcal{M}}a(x)huvd v_g\\
  &-\frac{1}{p+1-\alpha\varepsilon}\int\limits_{\mathcal{M}}
  a(x)v^{p+1-\alpha\varepsilon}d v_g-\frac{1}{q+1-\beta\varepsilon}\int\limits_{\mathcal{M}}a(x)
  u^{q+1-\beta\varepsilon}d v_g.
\end{align*}
It is clear that the critical points of $\mathcal{J}_\varepsilon$ are the solutions of system \eqref{prob}. Moreover,
\begin{align*}
  \mathcal{J}'_\varepsilon(u,v)(\varphi,\psi)=&\int\limits_{\mathcal{M}}a(x)(\nabla _g u\cdot \nabla _g \psi+\nabla _g v\cdot \nabla _g \varphi )d v_g+\int\limits_{\mathcal{M}}a(x)(hu\psi+hv\varphi)d v_g\\
  &-\int\limits_{\mathcal{M}}
  a(x)u^{q-\beta\varepsilon} \varphi d v_g-\int\limits_{\mathcal{M}}
  a(x)v^{p-\alpha\varepsilon} \psi d v_g,
\end{align*}
for any $(u,v),(\varphi,\psi)\in \mathcal{X}_{p,q}(\mathcal{M})$.
We also define the functional $\widetilde{\mathcal{J}}_\varepsilon:(\mathbb{R}^+)^k\times (\mathbb{R}^N)^k\rightarrow\mathbb{R}$
\begin{equation}\label{defj}
  \widetilde{\mathcal{J}}_\varepsilon(\bar{t},\bar{\eta}):=\mathcal{J}_\varepsilon\big(\mathcal{W}_{\bar{\delta},\bar{\xi^0},\bar{\eta}}+\Psi_{\varepsilon,\bar{t},\bar{\xi^0},\bar{\eta}}
  ,\mathcal{H}_{\bar{\delta},\bar{\xi^0},\bar{\eta}}+\Phi_{\varepsilon,\bar{t},\bar{\xi^0},\bar{\eta}}\big),
\end{equation}
where $\big(\mathcal{W}_{\bar{\delta},\bar{\xi^0},\bar{\eta}},\mathcal{H}_{\bar{\delta},\bar{\xi^0},\bar{\eta}}\big)$ is as \eqref{solu}, $\big(\Psi_{\varepsilon,\bar{t},\bar{\xi^0},\bar{\eta}},\Phi_{\varepsilon,\bar{t},\bar{\xi^0},\bar{\eta}}\big)$ is given in Proposition \ref{propo1}.
\begin{definition}
{\rm For a given $C^1$-function $\varphi_\varepsilon$, we say that the estimate $\varphi_\varepsilon=o(\varepsilon)$ is $C^1$-uniform if there hold $\varphi_\varepsilon=o(\varepsilon)$ and $\nabla \varphi_\varepsilon=o(\varepsilon)$ as $\varepsilon\rightarrow0$.}
\end{definition}
We solve equation \eqref{tou1} in Proposition \ref{propo2} below whose proof is postponed to Section \ref{sec5}.
\begin{proposition}\label{propo2}
(i) Under the assumptions of Theorem \ref{th}, if $\bar{\delta}$ is as in \eqref{solu'}, for any $\varepsilon>0$ small enough, if $(\bar{t},\bar{\eta})$ is a critical point of the functional $\widetilde{\mathcal{J}}_\varepsilon$, then $\big(\mathcal{W}_{\bar{\delta},\bar{\xi^0},\bar{\eta}}+\Psi_{\varepsilon,\bar{t},\bar{\xi^0},\bar{\eta}}
  ,\mathcal{H}_{\bar{\delta},\bar{\xi^0},\bar{\eta}}+\Phi_{\varepsilon,\bar{t},\bar{\xi^0},\bar{\eta}}\big)$ is a  solution of system \eqref{prob}, or equivalently of  \eqref{repro}.

(ii) Under the assumptions of Theorem \ref{th},
there holds
\begin{align*}
  \widetilde{\mathcal{J}}_\varepsilon(\bar{t},\bar{\eta})=&
  \sum\limits_{j=1}^ka(\xi^0_j)\big[\frac{2}{N}L_1+c_1\varepsilon-c_2\varepsilon \log \varepsilon+\Psi(t_j,\eta_j)\varepsilon\big]+o(\varepsilon)
\end{align*}
as $\varepsilon\rightarrow0$, $C^1$-uniformly with respect to $\bar{\eta}$ in $(\mathbb{R}^N)^k$ and to $\bar{t}$ in compact subsets of $(\mathbb{R}^+)^k$, where
\begin{equation}\label{defc1c2}
c_1=\Big(\frac{L_6\alpha}{p+1}+\frac{L_7\beta}{q+1}\Big)
  -\Big(\frac{\alpha}{(p+1)^2}+\frac{\beta}{(q+1)^2}\Big)
  L_1,\ \ \ c_2=\frac{NL_1}{2}\Big(\frac{\alpha}{(p+1)^2}
  +\frac{\beta}{(q+1)^2}\Big),
\end{equation}
and
\begin{align}\label{deffai}
  \Psi(t_j,\eta_j)=&\Big\{L_3h(\xi^0_j)-\Big(L_2-\frac{L_4}{p+1}-\frac{L_5}{q+1}\Big)\frac{Scal_g(\xi^0_j)}{6N} +
  \Big(L_2-\frac{L_4}{p+1}-\frac{L_5}{q+1}\Big) \frac{\Delta_g a(\xi^0_j)}{2Na(\xi^0_j)}\nonumber\\&+\frac{L_1D_g^2a(\xi^0_j)[\eta_j,\eta_j]}{Na(\xi^0_j)}
  \Big\}t_j-c_2\log t_j,
\end{align}
with $L_i$ are positive constants given in \eqref{chang}, $i=1,2,\cdots,7$.
\end{proposition}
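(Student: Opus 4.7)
\textbf{Proof proposal for Proposition \ref{propo2}.}

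\emph{Part (i).} The plan is the standard finite dimensional reduction argument. By Proposition \ref{propo1}, the pair $(\Psi_{\varepsilon,\bar{t},\bar{\xi^0},\bar{\eta}},\Phi_{\varepsilon,\bar{t},\bar{\xi^0},\bar{\eta}})$ already solves the orthogonal equation \eqref{tou2}, so it remains only to verify \eqref{tou1}. By Lemma \ref{topo} the space $\mathcal{Y}_{\bar{\delta},\bar{\xi^0},\bar{\eta}}$ is spanned by the $(\Psi^i_{\delta_j,\xi^0_j,\eta_j},\Phi^i_{\delta_j,\xi^0_j,\eta_j})$, so \eqref{tou1} is equivalent to showing that $\mathcal{J}'_\varepsilon(u_\varepsilon,v_\varepsilon)$ vanishes on each of these basis vectors. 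I would then differentiate the identity $\widetilde{\mathcal{J}}_\varepsilon(\bar{t},\bar{\eta})=\mathcal{J}_\varepsilon(\mathcal{W}+\Psi,\mathcal{H}+\Phi)$ with respect to $t_j$ and each coordinate of $\eta_j$, and write the resulting expression as $\sum_{l,m} c_{lm}(\bar{t},\bar{\eta})\,\mathcal{J}'_\varepsilon(u_\varepsilon,v_\varepsilon)(\Psi^l_{\delta_m,\xi^0_m,\eta_m},\Phi^l_{\delta_m,\xi^0_m,\eta_m})$. The coefficient matrix $(c_{lm})$ is, up to leading order, the Gram matrix computed in \eqref{gu1}--\eqref{gu4}, which is diagonal (modulo $O(\delta^2)$) with nonzero entries, hence invertible for $\varepsilon$ small. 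Thus the critical point condition $\nabla_{\bar{t},\bar{\eta}}\widetilde{\mathcal{J}}_\varepsilon=0$ forces all inner products to vanish, giving \eqref{tou1}.

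\emph{Part (ii), the energy expansion.} My plan is to first reduce to the ansatz and then expand bubble by bubble. Using the $\mathcal{C}^1$-bound $\|(\Psi_{\varepsilon,\bar{t},\bar{\xi^0},\bar{\eta}},\Phi_{\varepsilon,\bar{t},\bar{\xi^0},\bar{\eta}})\|\le C\varepsilon|\log\varepsilon|$ from Proposition \ref{propo1}, together with the quadratic Taylor expansion of $\mathcal{J}_\varepsilon$ at the ansatz (the linear term in $(\Psi,\Phi)$ is suppressed by the orthogonality \eqref{tou2}, and the quadratic term is $O(\|(\Psi,\Phi)\|^2)=O(\varepsilon^2|\log\varepsilon|^2)=o(\varepsilon)$), I obtain
\begin{equation*}
\widetilde{\mathcal{J}}_\varepsilon(\bar t,\bar\eta)=\mathcal{J}_\varepsilon(\mathcal{W}_{\bar{\delta},\bar{\xi^0},\bar{\eta}},\mathcal{H}_{\bar{\delta},\bar{\xi^0},\bar{\eta}})+o(\varepsilon),
\end{equation*}
and the same argument, applied after differentiating, gives $C^1$-uniformity with respect to $(\bar t,\bar\eta)$. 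Next, since $d_g(\xi^0_j,\xi^0_m)>r_0$ for $j\neq m$, the cutoff in \eqref{fun1} makes the supports of the $k$ bubbles pairwise disjoint, so the right-hand side decomposes as $\sum_{j=1}^k\mathcal{J}^{(j)}_\varepsilon+o(\varepsilon)$, where $\mathcal{J}^{(j)}_\varepsilon$ is the functional evaluated on a single bubble at $\xi^0_j$.

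\emph{Single-bubble expansion.} For each $j$, I would change variables via $x=\exp_{\xi^0_j}(\delta_j z+\delta_j\eta_j)$, after which $\mathcal{J}^{(j)}_\varepsilon$ becomes an integral over $B(0,r_0/\delta_j)\subset\mathbb{R}^N$ of an integrand expressed in terms of $U_{1,0},V_{1,0}$ and the pulled-back quantities $g_{\delta_j,\xi^0_j,\eta_j}$, $a_{\delta_j,\xi^0_j,\eta_j}$, $h_{\delta_j,\xi^0_j,\eta_j}$. The leading, scale-invariant contribution from the gradient coupling equals $\frac{2}{N}L_1\,a(\xi^0_j)$. To extract the next orders I insert the Cartan expansion $\sqrt{\det g_{\delta_j,\xi^0_j,\eta_j}}(z)=1-\tfrac{\delta_j^2}{6}\mathrm{Ric}_g(\xi^0_j)[z+\eta_j,z+\eta_j]+O(\delta_j^3)$, together with the Taylor expansions of $a$ and $h$ at $\xi^0_j$; since $\xi^0_j$ is a critical point of $a$, the linear term in $a$ is absent, so the $z$-odd part integrates to zero and the residual $\delta_j^2 D_g^2 a(\xi^0_j)[z+\eta_j,z+\eta_j]$ splits into a scalar part proportional to $\Delta_g a(\xi^0_j)$ (contributing the coefficient $\frac{\Delta_g a(\xi^0_j)}{2Na(\xi^0_j)}$ after using $\int|z|^2 U_{1,0} V_{1,0}\,dz$-type identities) and a quadratic form $D_g^2 a(\xi^0_j)[\eta_j,\eta_j]$ (yielding the $\eta_j$-dependent term). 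The scalar curvature combination arises from the $\mathrm{Ric}$-term contracted with $\int|z|^2\nabla U_{1,0}\cdot\nabla V_{1,0}$, $\int|z|^2 V_{1,0}^{p+1}$ and $\int|z|^2 U_{1,0}^{q+1}$, producing exactly the coefficient structure $L_2-\frac{L_4}{p+1}-\frac{L_5}{q+1}$ times $\frac{\mathrm{Scal}_g(\xi^0_j)}{6N}$. The potential term contributes $L_3 h(\xi^0_j)\delta_j^2=L_3 h(\xi^0_j)\varepsilon t_j$. Finally, expanding $v^{p+1-\alpha\varepsilon}=V^{p+1}-\alpha\varepsilon V^{p+1}\log V+O(\varepsilon^2)$ and the analogous identity for $u^{q+1-\beta\varepsilon}$, and writing $\delta_j^{-\alpha\varepsilon\frac{N}{p+1}}=\exp(-\tfrac{\alpha N\varepsilon}{2(p+1)}\log(\varepsilon t_j))$, produces the constants $c_1$, $-c_2\log\varepsilon$ (via $\log\delta_j^2=\log\varepsilon+\log t_j$) and the $-c_2\log t_j$ correction of \eqref{deffai}. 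Summing over $j$ gives the stated expression.

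\emph{Main obstacle.} The delicate step is the simultaneous bookkeeping of all $\varepsilon$-order terms while preserving $C^1$-uniformity in $(\bar t,\bar\eta)$. This forces one to differentiate each Taylor remainder in $\delta_j$, $\eta_j$ and check that every error term, including those arising from the cutoff $\chi$ (where Lemmas \ref{jian1}--\ref{jian2} are essential) and from the $L^{(p+1)/p}\times L^{(q+1)/q}$ estimate of the nonlinear remainder, remains $o(\varepsilon)$ after one differentiation. The dimension thresholds in \eqref{condi} enter precisely here, to guarantee that $L_2,L_4,L_5,L_6,L_7$ are finite and that the leftover integrals away from the concentration point are negligible.
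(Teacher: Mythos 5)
Your plan coincides with the paper's proof in all essentials: part (i) is proved exactly by the mechanism you describe (the derivatives of $\widetilde{\mathcal{J}}_\varepsilon$ reduce, via \eqref{gu1}--\eqref{gu4} and the identities $\partial_{t_m}\mathcal{W}=-t_m^{-1}(\Psi^0+\sum_l\eta_{ml}\Psi^l)$, $\partial_{\eta_{ml}}\mathcal{W}=\Psi^l$, to a nearly diagonal homogeneous linear system for the coefficients $c_{ij}$, which therefore vanish), and part (ii) is split, as you propose, into the single-bubble expansion in normal coordinates and the reduction $\widetilde{\mathcal{J}}_\varepsilon=\mathcal{J}_\varepsilon(\mathcal{W}_{\bar{\delta},\bar{\xi^0},\bar{\eta}},\mathcal{H}_{\bar{\delta},\bar{\xi^0},\bar{\eta}})+o(\varepsilon)$, both $C^1$-uniformly. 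Two points in your sketch need repair. First, the linear term $\mathcal{J}'_\varepsilon(\mathcal{W}_{\bar{\delta},\bar{\xi^0},\bar{\eta}},\mathcal{H}_{\bar{\delta},\bar{\xi^0},\bar{\eta}})(\Psi_{\varepsilon,\bar{t},\bar{\xi^0},\bar{\eta}},\Phi_{\varepsilon,\bar{t},\bar{\xi^0},\bar{\eta}})$ is \emph{not} suppressed by the orthogonality \eqref{tou2}: that equation is the vanishing of the $\mathcal{Z}$-projection of the full perturbed equation, not orthogonality of the correction to the error of the unperturbed ansatz. The correct argument, which the paper uses, is to bound this pairing by $\|\mathcal{R}_{\varepsilon,\bar{t},\bar{\xi^0},\bar{\eta}}\|\cdot\|(\Psi_{\varepsilon,\bar{t},\bar{\xi^0},\bar{\eta}},\Phi_{\varepsilon,\bar{t},\bar{\xi^0},\bar{\eta}})\|=O(\varepsilon^2|\log\varepsilon|^2)=o(\varepsilon)$ via Lemma \ref{error}, H\"older and Sobolev (plus the separate estimate of the terms $\int \Phi\,\nabla_g a\cdot\nabla_g\mathcal{W}\,dv_g$ coming from integrating by parts against the weight $a$). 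Second, in part (i) the differentiation of $\widetilde{\mathcal{J}}_\varepsilon$ also produces $\mathcal{J}'_\varepsilon(u_\varepsilon,v_\varepsilon)(\partial_s\Psi_{\varepsilon,\bar{t},\bar{\xi^0},\bar{\eta}},\partial_s\Phi_{\varepsilon,\bar{t},\bar{\xi^0},\bar{\eta}})$, which your sketch omits; since one has no good direct bound on $\partial_s(\Psi_{\varepsilon},\Phi_{\varepsilon})$, the paper transfers the derivative onto the kernel elements using $(\Psi_{\varepsilon},\Phi_{\varepsilon})\in\mathcal{Z}_{\bar{\delta},\bar{\xi^0},\bar{\eta}}$ and then applies Proposition \ref{propo1}, showing these contributions are $o(\varepsilon^{\vartheta})$ and hence do not spoil the invertibility of the system for the $c_{ij}$. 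With these two adjustments your outline reproduces the paper's argument.
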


We now prove Theorem \ref{th} by using Propositions \ref{propo1} and \ref{propo2}.\\ 
{\bf Proof of Theorem \ref{th}.} Define $\widetilde{\mathcal{J}}:(\mathbb{R}^+)^k\times (\mathbb{R}^N)^k\rightarrow \mathbb{R}$ by
\begin{equation*}
  \widetilde{\mathcal{J}}(\bar{t},\bar{\eta}):=\sum\limits_{j=1}^k\Phi(t_j,\eta_j),\quad \text{with $\Phi(t_j,\eta_j)=\frac{a(\xi^0_j)\Psi(t_j,\eta_j)}{L_3}$},
\end{equation*}
where $L_3>0$ is given in \eqref{chang}.
Since $\xi_j^0$ is a non-degenerate critical  point of $a(x)$ with \eqref{condi1} holds, set
\begin{equation*}
  \Theta(\xi^0_j):=h(\xi^0_j)-\Big(L_2-\frac{L_4}{p+1}-\frac{L_5}{q+1}\Big)\frac{Scal_g(\xi^0_j)}{6NL_3} +
  \Big(L_2-\frac{L_4}{p+1}-\frac{L_5}{q+1}\Big) \frac{\Delta_g a(\xi^0_j)}{2NL_3a(\xi^0_j)}\quad \text{and}\quad t^0_j:=\frac{c_2}{\Theta(\xi^0_j)},
\end{equation*}
then $t_j^0>0$ and $(t_j^0,0)$ is a non-degenerate critical point of $\Phi(t_j,\eta_j)$, $j=1,2,\cdots,k$. Hence $(\bar{t^0},0)$ is a non-degenerate critical point of $\widetilde{\mathcal{J}}(\bar{t},\bar{\eta})$.
Using Proposition \ref{propo2}, we have
\begin{equation*}
  \big|\partial _{\bar{t}}\big(\varepsilon^{-1}L_3^{-1}\widetilde{\mathcal{J}}_\varepsilon-\widetilde{\mathcal{J}}\big)\big|+\big|\partial_{\bar{\eta}} \big(\varepsilon^{-1}L_3^{-1}\widetilde{\mathcal{J}}_\varepsilon-\widetilde{\mathcal{J}}\big)\big|\rightarrow0,
\end{equation*}
as $\varepsilon\rightarrow0$, uniformly with respect to $\bar{\eta}$ in $(\mathbb{R}^N)^k$ and to $\bar{t}$ in compact subsets of $(\mathbb{R}^+)^k$. It follows that there exists a family of critical points $(\bar{t^\varepsilon},\bar{\eta^\varepsilon})$ of $\widetilde{\mathcal{J}}_\varepsilon$ converging to $(\bar{t^0},0)$ as $\varepsilon\rightarrow0$. Using Proposition \ref{propo2} again, we can see that the function $(u_\varepsilon,v_\varepsilon)=\big(\mathcal{W}_{\bar{\delta^\varepsilon},\bar{\xi^0},\bar{\eta^\varepsilon}}+\Psi_{\varepsilon,\bar{t^\varepsilon},\bar{\xi^0},\bar{\eta^\varepsilon}}
  ,\mathcal{H}_{\bar{\delta^\varepsilon},\bar{\xi^0},\bar{\eta^\varepsilon}}+\Phi_{\varepsilon,\bar{t^\varepsilon},\bar{\xi^0},\bar{\eta^\varepsilon}}\big)$ is a pair of solutions of system \eqref{prob} for any $\varepsilon>0$ small enough, where $\bar{\delta^\varepsilon}$ is as in \eqref{solu'}. Moreover,
$(u_\varepsilon,v_\varepsilon)$ blows up and concentrates at $\bar{\xi^0}$ at $\varepsilon\rightarrow0$. This ends the proof. \qed

\section{Proof of Proposition \ref{propo1}}\label{sec4}

This section is devoted to the proof of Proposition \ref{propo1}. For any $\varepsilon>0$, $\bar{t}\in (\mathbb{R}^+)^k$, and $\bar{\eta}\in (\mathbb{R}^N)^k$, if $\bar{\delta}$ is as in \eqref{solu'}, we introduce the map
$\mathcal{L}_{\varepsilon,\bar{t},\bar{\xi^0},\bar{\eta}}:\mathcal{Z}_{\bar{\delta},\bar{\xi^0},\bar{\eta}}\rightarrow \mathcal{Z}_{\bar{\delta},\bar{\xi^0},\bar{\eta}}$ defined by
\begin{equation}\label{defl}
\mathcal{L}_{\varepsilon,\bar{t},\bar{\xi^0},\bar{\eta}}(\Psi,\Phi)=
  \Pi_{\bar{\delta},\bar{\xi^0},\bar{\eta}}^\bot\Big[
  (\Psi,\Phi)-\mathcal{I}^*\big(a(x)f'_\varepsilon(\mathcal{H}_{\bar{\delta},\bar{\xi^0},\bar{\eta}})\Phi,a(x)g'_\varepsilon(\mathcal{W}_{\bar{\delta},\bar{\xi^0},\bar{\eta}})\Psi\big)
  \Big].
\end{equation}
It's easy to check that $\mathcal{L}_{\varepsilon,\bar{t},\bar{\xi^0},\bar{\eta}}$ is well defined in $\mathcal{Z}_{\bar{\delta},\bar{\xi^0},\bar{\eta}}$. Next, we prove the invertibility of this map.
\begin{lemma}\label{line}
Under the assumptions of Theorem \ref{th}, if $\bar{\delta}$ is as in \eqref{solu'}, then for any $\varepsilon>0$ small enough, and $(\Psi,\Phi)\in \mathcal{Z}_{\bar{\delta},\bar{\xi^0},\bar{\eta}}$, there holds
\begin{equation*}
  \|\mathcal{L}_{\varepsilon,\bar{t},\bar{\xi^0},\bar{\eta}}(\Psi,\Phi)\|\geq C\|(\Psi,\Phi)\|,
\end{equation*}
where $\mathcal{L}_{\varepsilon,\bar{t},\bar{\xi^0},\bar{\eta}}(\Psi,\Phi)$ is as in \eqref{defl}.
\end{lemma}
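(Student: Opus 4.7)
The plan is a standard contradiction argument adapted to the Hamiltonian system setting. Suppose the conclusion fails; then there exist sequences $\varepsilon_n \to 0^+$, $\bar{t}_n$ in a compact subset of $(\mathbb{R}^+)^k$, $\bar{\eta}_n$ bounded in $(\mathbb{R}^N)^k$, and $(\Psi_n,\Phi_n) \in \mathcal{Z}_{\bar{\delta}_n,\bar{\xi^0},\bar{\eta}_n}$ with $\|(\Psi_n,\Phi_n)\| = 1$ such that $\|\mathcal{L}_{\varepsilon_n,\bar{t}_n,\bar{\xi^0},\bar{\eta}_n}(\Psi_n,\Phi_n)\| \to 0$. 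By Lemma~\ref{topo} and the definition \eqref{defl}, one can write
\begin{equation*}
(\Psi_n,\Phi_n) - \mathcal{I}^*\!\big(a f'_{\varepsilon_n}(\mathcal{H}_n)\Phi_n,\, a g'_{\varepsilon_n}(\mathcal{W}_n)\Psi_n\big) = \sum_{l,m} c_{lm}^n \big(\Psi^l_{\delta_m^n,\xi_m^0,\eta_m^n},\Phi^l_{\delta_m^n,\xi_m^0,\eta_m^n}\big) + o(1)
\end{equation*}
in $\mathcal{X}_{p,q}(\mathcal{M})$, and the first task is to show $c_{lm}^n \to 0$. Testing this identity against each basis vector $(\Psi^i_{\delta_j^n,\xi_j^0,\eta_j^n},\Phi^i_{\delta_j^n,\xi_j^0,\eta_j^n})$ in $\langle \cdot,\cdot \rangle_h$, invoking the orthogonality condition encoded in $\mathcal{Z}_{\bar{\delta}_n,\bar{\xi^0},\bar{\eta}_n}$, and using the nearly diagonal nature of the Gram matrix computed in \eqref{gu1}--\eqref{gu4}, one reduces to an almost-invertible finite linear system whose right-hand side vanishes in the limit, hence $c_{lm}^n \to 0$.

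\textbf{Blow-up analysis.} For each $j=1,\dots,k$, I rescale around $\xi_j^0$ by setting
\begin{equation*}
\tilde{\Psi}_n^j(z) := (\delta_j^n)^{\frac{N}{q+1}} \Psi_n\!\big(\exp_{\xi_j^0}(\delta_j^n(z+\eta_j^n))\big), \qquad \tilde{\Phi}_n^j(z) := (\delta_j^n)^{\frac{N}{p+1}} \Phi_n\!\big(\exp_{\xi_j^0}(\delta_j^n(z+\eta_j^n))\big).
\end{equation*}
The change of variables preserves the relevant $\dot W^{1,p^*}$ and $\dot W^{1,q^*}$ norms up to constants, so $(\tilde\Psi_n^j,\tilde\Phi_n^j)$ is bounded and, along a subsequence, converges weakly to some $(\tilde\Psi^j,\tilde\Phi^j)$ in $\dot W^{1,p^*}(\mathbb{R}^N)\times \dot W^{1,q^*}(\mathbb{R}^N)$. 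Rescaling the displayed equation of the previous paragraph and using that the metric coefficients tend to the Euclidean ones, that $f'_{\varepsilon_n}(H_{\delta_j^n,\xi_j^0,\eta_j^n}) \to p V_{1,0}^{p-1}$ and $g'_{\varepsilon_n}(W_{\delta_j^n,\xi_j^0,\eta_j^n}) \to q U_{1,0}^{q-1}$ (note $\alpha\varepsilon_n, \beta\varepsilon_n \to 0$), and that interaction terms between different bubbles vanish because $d_g(\xi_j^0,\xi_m^0) > r_0$ for $j \neq m$, one finds that $(\tilde\Psi^j,\tilde\Phi^j)$ is a weak solution of the linearized system in Lemma~\ref{nonde}. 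Hence $(\tilde\Psi^j,\tilde\Phi^j) = \sum_{i=0}^{N} \alpha_i^j (\Psi^i_{1,0},\Phi^i_{1,0})$.

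\textbf{Killing the limit and concluding.} The orthogonality condition $\langle(\Psi_n,\Phi_n),(\Psi^i_{\delta_j^n,\xi_j^0,\eta_j^n},\Phi^i_{\delta_j^n,\xi_j^0,\eta_j^n})\rangle_h = 0$ passes to the limit, after the same change of variables as in \eqref{gu1}--\eqref{gu4}, yielding
\begin{equation*}
\int_{\mathbb{R}^N} \big(\nabla \tilde\Psi^j \cdot \nabla \Phi^i_{1,0} + \nabla \tilde\Phi^j \cdot \nabla \Psi^i_{1,0}\big)\,dz = 0, \qquad i=0,1,\dots,N,
\end{equation*}
(the potential $h$-terms contribute $O(\delta_j^{n\,2})$ and drop out). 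Using Lemma~\ref{nonde} and the Hessian non-singularity of the bilinear pairing on the kernel, this forces $\alpha_i^j = 0$ for all $i,j$, so $(\tilde\Psi^j,\tilde\Phi^j) \equiv 0$. Combined with $c_{lm}^n \to 0$ and $\|\mathcal{L}_n\| \to 0$, together with the continuity estimate \eqref{em}, this implies $\|(\Psi_n,\Phi_n)\| \to 0$, contradicting the normalization.

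\textbf{Main obstacle.} The delicate point is the compactness step: one must show that $a f'_{\varepsilon_n}(\mathcal{H}_n)\Phi_n \to 0$ strongly in $L^{\frac{p+1}{p}}(\mathcal{M})$ and $a g'_{\varepsilon_n}(\mathcal{W}_n)\Psi_n \to 0$ strongly in $L^{\frac{q+1}{q}}(\mathcal{M})$. This requires splitting the integration into a shrinking neighbourhood of each $\xi_j^0$, where the rescaled weak limits vanish, and the exterior region, where the decay estimates of Lemmas~\ref{jian1}--\ref{jian2} together with the dimensional restrictions \eqref{condi} ensure that $V_{1,0}^{p-1}$ and $U_{1,0}^{q-1}$ have enough integrability to close the estimate. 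Handling the slow decay case $p < \frac{N}{N-2}$ requires the sharper bound $O(r^{2-(N-2)p})$ and is the reason the assumption $N \geq 8$ appears in \eqref{condi}(i) and (iii).
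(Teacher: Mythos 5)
Your proposal is correct and follows essentially the same route as the paper: a contradiction argument with normalized $(\Psi_n,\Phi_n)$, rescaling around each $\xi_j^0$ to extract weak limits that solve the linearized system of Lemma \ref{nonde}, using the orthogonality in $\mathcal{Z}_{\bar{\delta},\bar{\xi^0},\bar{\eta}}$ together with the Gram-matrix computations \eqref{gu1}--\eqref{gu4} to kill both the limits and the coefficients $c^n_{lm}$, and closing with the H\"older/decay estimates (with the case split at $p=\frac{N}{N-2}$) to get strong convergence of the nonlinear terms. You also correctly single out the final compactness step as the delicate point, which is exactly where the paper's Step 4 does the work.
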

\begin{proof}
We assume by contradiction that there exist a sequence $\varepsilon_n\rightarrow 0$ as $n\rightarrow+\infty$, $\bar{t_n}=(t_{1n},t_{2n},\cdots,t_{kn})\in (\mathbb{R}^+)^k$, $\bar{\eta_n}=(\eta_{1n},\eta_{2n},\cdots,\eta_{kn})\in (\mathbb{R}^N)^k$, and a sequence of functions $(\Psi_n,\Phi_n)\in\mathcal{Z}_{\bar{\delta_n},\bar{\xi^0_n},\bar{\eta_n}}$ such that
\begin{equation*}
  \|(\Psi_n,\Phi_n)\|=1,\quad \|\mathcal{L}_{\varepsilon_n,\bar{t_n},\bar{\xi^0_n},\bar{\eta_n}}(\Psi_n,\Phi_n)\|\rightarrow0, \quad \text{as $n\rightarrow+\infty$}.
\end{equation*}

{\bf Step 1:}
For any $n\in \mathbb{N}^+$ and $j=1,2,\cdots,k$, let
\begin{align*}
  &(\widetilde{\Psi}_n(z),\widetilde{\Phi}_n(z))\\
  =&\big(\chi(\delta_{jn}z+\delta_{jn}\eta_{jn})\delta_{jn}^{\frac{N}{q+1}}\Psi_n(\exp _{\xi^0_{jn}}(
  \delta_{jn} z+\delta_{jn} \eta_{jn})),\chi(\delta_{jn}z+\delta_{jn}\eta_{jn})\delta_{jn}^{\frac{N}{p+1}}\Phi_n(\exp _{\xi^0_{jn}}(\delta_{jn} z+\delta_{jn}\eta_{jn}))\big),
\end{align*}
where $\chi$ is a cutoff function as in \eqref{fun1}. A direct computations shows 
 \begin{align*}
  \big\|\nabla \widetilde{\Psi}_n\big\|^{p^*}_{L^{p^*}(\mathbb{R}^N)}&\leq \int\limits_{B(0,r_0/\delta_{jn})}| \delta_{jn}^{\frac{N}{q+1}}\nabla\Psi_n(\exp _{\xi^0_{jn}}(\delta_{jn} z+\delta_{jn}\eta))|^{p^*}dz\\
  &=\int\limits_{B(0,r_0)}\delta_{jn}^{-N}|\delta_{jn}^{1+\frac{N}{q+1}}\nabla\Psi_n(\exp _{\xi_{jn}}(z+\eta))|^{p^*}dz\\
  &=\int\limits_{B_g(\xi_{jn},r_0)}|\nabla_g\Psi_n|^{p^*}d v_g=\int\limits_{\mathcal{M}}|\nabla_g\Psi_n|^{p^*}d v_g\leq C,
\end{align*}
and
\begin{align*}
  \big\|\nabla \widetilde{\Phi}_n\big\|^{q^*}_{L^{q^*}(\mathbb{R}^N)}&\leq \int\limits_{B(0,r_0/\delta_{jn})}| \delta_{jn}^{\frac{N}{p+1}}\nabla\Phi_n(\exp _{\xi^0_{jn}}(\delta_{jn} z+\delta_{jn}\eta))|^{q^*}dz\\&=\int\limits_{B(0,r_0)}\delta_{jn}^{-N}|\delta_{jn}^{1+\frac{N}{p+1}}\nabla\Phi_n(\exp _{\xi^0_{jn}}(z+\eta))|^{q^*}dz\\
  &=\int\limits_{B_g(\xi_{jn},r_0)}|\nabla_g\Phi_n|^{q^*}d v_g=\int\limits_{\mathcal{M}}|\nabla_g\Phi_n|^{q^*}d v_g\leq C.
\end{align*}
Hence, $(\widetilde{\Psi}_n,\widetilde{\Phi}_n)$ is bounded in $ \dot{W}^{1,p^*}(\mathbb{R}^N)\times \dot{W}^{1,q^*}(\mathbb{R}^N)$.
Up to a subsequence, there exists $(\widetilde{\Psi},\widetilde{\Phi})\in \dot{W}^{1,p^*}(\mathbb{R}^N)\times \dot{W}^{1,q^*}(\mathbb{R}^N)$ such that $(\widetilde{\Psi}_n,\widetilde{\Phi}_n)\rightharpoonup (\widetilde{\Psi},\widetilde{\Phi})$ in $ \dot{W}^{1,p^*}(\mathbb{R}^N)\times \dot{W}^{1,q^*}(\mathbb{R}^N)$, $(\widetilde{\Psi}_n,\widetilde{\Phi}_n)\rightarrow (\widetilde{\Psi},\widetilde{\Phi})$ in $L_{loc}^{s}(\mathbb{R}^N)\times L_{loc}^{t}(\mathbb{R}^N)$ for any $(s,t)\in [1,q+1]\times[1,p+1]$, and $(\widetilde{\Psi}_n,\widetilde{\Phi}_n)\rightarrow (\widetilde{\Psi},\widetilde{\Phi})$ almost everywhere in $\mathbb{R}^N$.
   For convenience, we denote $(P_n,K_n)=\mathcal{L}_{\varepsilon_n,\bar{t_n},\bar{\xi^0_n},\bar{\eta_n}}(\Psi_n,\Phi_n)$. Furthermore, by $(P_n,K_n) \in \mathcal{Z}_{\bar{\delta_{n}},\bar{\xi^0_n},\bar{\eta_n}}$, there exist $c_{1n}^0,c_{2n}^0,\cdots,c_{kn}^0$, $c_{1n}^1,c_{2n}^1,\cdots,c_{kn}^1$, $\cdots$, $c_{1n}^ N,c_{2n}^N,\cdots,c_{kn}^N$ such that
  \begin{align}\label{line1}
 & (\Psi_n,\Phi_n)-\mathcal{I}^*\big(a(x)f'_{\varepsilon_n}(\mathcal{H}_{\bar{\delta_n},\bar{\xi^0_n},\bar{\eta_n}})\Phi_n,a(x)g'_{\varepsilon_n}(\mathcal{W}_{\bar{\delta_n},\bar{\xi^0_n},\bar{\eta_n}})\Psi_n\big)\nonumber\\
 =&(P_n,K_n)+
  \sum\limits
  _{l=0}^N\sum\limits
  _{m=1}^kc_{mn}^l(\Psi_{\delta_{mn},\xi^0_{mn},\eta_{mn}}^l,\Phi_{\delta_{mn},\xi^0_{mn},\eta_{mn}}^l),
  \end{align}
  which also reads
  \begin{align}\label{line1'}
 \left\{
  \begin{array}{ll}
 \Psi_n-\mathcal{I}^*\big(a(x)f'_{\varepsilon_n}(\mathcal{H}_{\bar{\delta_n},\bar{\xi^0_n},\bar{\eta_n}})\Phi_n\big)-P_n=\sum\limits
  _{l=0}^N\sum\limits
  _{m=1}^kc_{mn}^l\Psi_{\delta_{mn},\xi^0_{mn},\eta_{mn}}^l,
    \quad\mbox{in $\mathbb{R}^N$},\\
   \Phi_n-\mathcal{I}^*\big(a(x)g'_{\varepsilon_n}(\mathcal{W}_{\bar{\delta_n},\bar{\xi^0_n},\bar{\eta_n}})\Psi_n\big)-K_n=\sum\limits
  _{l=0}^N\sum\limits
  _{m=1}^kc_{mn}^l\Phi_{\delta_{mn},\xi^0_{mn},\eta_{mn}}^l,
   \quad\mbox{in $\mathbb{R}^N$}.
    \end{array}
    \right.
  \end{align}
  Using $(\Psi_n,\Phi_n) \in \mathcal{Z}_{\bar{\delta_{n}},\bar{\xi^0_n},\bar{\eta_n}}$ again, by an easy change of variable, for $i=0,1,\cdots,N$ and $j=1,2\cdots,k$, we have
  \begin{align*}
    0=&\int\limits_{\mathcal{M}} (a(x)\nabla _g \Psi_n\cdot \nabla _g \Phi _{\delta_{jn},\xi^0_{jn},\eta_{jn}}^i+a(x)\nabla _g \Phi_n\cdot \nabla _{g} \Psi _{\delta_{jn},\xi^0_{jn},\eta_{jn}}^i+a(x)h\Psi_n \Phi _{\delta_{jn},\xi^0_{jn},\eta_{jn}}^i+h \Phi_n \Psi _{\delta_{jn},\xi^0_{jn},\eta_{jn}}^i)    d v_g\\
    =&\int\limits_{B(0,r_0/\delta_{jn})} \Big[\delta_{jn}^{N-2-\frac{N}{p+1}}a_n\nabla _{g_n} \Psi_n(\exp_{\xi^0_{jn}}(\delta_{jn} z+\delta_{jn}\eta_{jn}))\cdot \nabla _{g_n} (\chi_n\Phi _{1,0}^i)\\&+\delta_{jn}^{N-2-\frac{N}{q+1}}a_n\nabla _{g_n} \Phi_n(\exp_{\xi^0_{jn}}( \delta_{jn} z+\delta_{jn}\eta_{jn}))\cdot \nabla _{g_n} (\chi_n\Psi _{1,0}^i)\\
  &+\delta_{jn}^{N-\frac{N}{p+1}}a_nh_n\Psi_n (\exp_{\xi^0_{jn}}( \delta_{jn} z+\delta_{jn}\eta_{jn})) \chi_n\Phi _{1,0}^i+\delta_{jn}^{N-\frac{N}{q+1}}a_nh_n \Phi_n(\exp_{\xi^0_{jn}}( \delta_{jn} z+\delta_{jn}\eta_{jn})) \chi_n\Psi _{1,0}^i\Big]    dz\\
    =&\int\limits_{B(0,r_0/\delta_{jn})} \big[a_n\nabla _{g_n} \widetilde{\Psi}_n\cdot \nabla _{g_n} (\chi_n\Phi _{1,0}^i)+a_n\nabla _{g_n} \widetilde{\Phi}_n\cdot \nabla _{g_n} (\chi_n \Psi _{1,0}^i)+\delta_{jn}^2a_nh_n\widetilde{\Psi}_n \Phi _{1,0}^i+\delta_{jn}^2a_nh_n \widetilde{\Phi}_n \Psi _{1,0}^i\big]    dz,
  \end{align*}
  where $g_{n}(z)=\exp_{\xi^0_{jn}}^*g(\delta_{jn}z+\delta_{jn}\eta_{jn})$, $\chi_{n}(z)=\chi({\delta_{jn}z+\delta_{jn}\eta_{jn}})$, $a_{n}(z)=a(\exp_{\xi^0_{jn}}(\delta_{jn}z+\delta_{jn}\eta_{jn}))$ and $h_{n}(z)=h(\exp_{\xi^0_{jn}}(\delta_{jn}z+\delta_{jn}\eta_{jn}))$. By Lemma \ref{nonde},  passing to the limit for the above equality, we obtain
  \begin{equation}\label{line2}
  \lim\limits_{n\rightarrow+\infty}\int\limits_{\mathbb{R}^N}a_n\big(pV_{1,0}^{p-1}\Phi_{1,0}^i\widetilde{\Phi}+qU_{1,0}^{q-1}\Psi_{1,0}^i\widetilde{\Psi}\big)dz=  \lim\limits_{n\rightarrow+\infty}\int\limits_{\mathbb{R}^N} a_n\big(\nabla  \widetilde{\Psi}\cdot \nabla \Phi _{1,0}^i+\nabla  \widetilde{\Phi}\cdot \nabla  \Psi _{1,0}^i\big)dz=0.
  \end{equation}

  {\bf Step 2:} For any $l=0,1,\cdots,N$ and $m=1,2,\cdots,k$, $c_{mn}^l\rightarrow 0$ as $n\rightarrow \infty$. For any $n\in \mathbb{N}^+$, since $(\Psi_n,\Phi_n)$ and $ (P_n,K_n)$ belong to $ \mathcal{Z}_{\bar{\delta_{n}},\bar{\xi^0_n},\bar{\eta_n}}$, multiplying \eqref{line1} by $(\Psi_{\delta_{jn},\xi^0_{jn},\eta_{jn}}^i,\Phi_{\delta_{jn},\xi^0_{jn},\eta_{jn}}^i)$, $0\leq i\leq N$, $1\leq j\leq k$, using \eqref{gu1}-\eqref{gu4}, we have
  \begin{align}\label{line3}
    & -\int\limits _{\mathcal{M}} \big(a(x)f'_{\varepsilon_n}(\mathcal{H}_{\bar{\delta_n},\bar{\xi^0_n},\bar{\eta_n}})\Phi_n \Phi_{\delta_{jn},\xi^0_{jn},\eta_{jn}}^i+a(x)g'_{\varepsilon_n}(\mathcal{W}_{\bar{\delta_n},\bar{\xi^0_n},\bar{\eta_n}})\Psi_n \Psi_{\delta_{jn},\xi^0_{jn},\eta_{jn}}^i\big)d v_g\nonumber\\
    =&\sum\limits_{l=0}^N\sum\limits_{m=1}^kc_{mn}^l\delta_{il}\delta_{jm}\int\limits_{B(0,r_0/{\delta_{jn}})}\big(pa_n\chi^2_n V_{1,0}^{p-1}(\Phi_{1,0}^i)^2+qa_n\chi^2_n U_{1,0}^{q-1}(\Psi_{1,0}^i)^2
  \big)dz+O(\delta_{jn}^2).
  \end{align}
  Moreover, by \eqref{line2}, we have
  \begin{align}\label{line4}
  &\int\limits _{\mathcal{M}} \big(a(x)f'_{\varepsilon_n}(\mathcal{H}_{\bar{\delta_n},\bar{\xi^0_n},\bar{\eta_n}})\Phi_n \Phi_{\delta_{jn},\xi^0_{jn},\eta_{jn}}^i+a(x)g'_{\varepsilon_n}(\mathcal{W}_{\bar{\delta_n},\bar{\xi^0_n},\bar{\eta_n}})\Psi_n \Psi_{\delta_{jn},\xi^0_{jn},\eta_{jn}}^i\big)d v_g \nonumber\\
    =&\int\limits _{\mathcal{M}} \big((p-\alpha{\varepsilon_n})a(x)\mathcal{H}_{\bar{\delta_{n}},\bar{\xi^0_{n},\bar{\eta_n}}}^{p-1-\alpha{\varepsilon_n}}\Phi_n \Phi_{\delta_{jn},\xi^0_{jn},\eta_{jn}}^i+(q-\beta{\varepsilon_n})a(x)\mathcal{W}_{\bar{\delta_{n}},\bar{\xi^0_{n},\bar{\eta_n}}}^{q-1-\beta{\varepsilon_n}}\Psi_n \Psi_{\delta_{jn},\xi^0_{jn},\eta_{jn}}^i\big)d v_g\nonumber\\
    =&\sum\limits_{m=1}^k\int\limits _{\mathcal{M}} \big((p-\alpha{\varepsilon_n})a(x)H_{\delta_{mn},\xi^0_{mn},\eta_{mn}}^{p-1-\alpha{\varepsilon_n}}\Phi_n \Phi_{\delta_{jn},\xi_{jn}}^i+(q-\beta{\varepsilon_n})a(x)W_{\delta_{mn},\xi^0_{mn},\eta_{mn}}^{q-1-\beta{\varepsilon_n}}\Psi_n \Psi_{\delta_{jn},\xi_{jn},\eta_{jn}}^i\big)d v_g\nonumber\\
    =&\sum\limits_{m=1}^k\delta_{jm}\int\limits _{B(0,r_0/\delta_{jn})} \Big[(p-\alpha{\varepsilon_n})\delta_{jn}^{N-\frac{N(p-\alpha{\varepsilon_n})}{p+1}-\frac{N}{p+1}}a_n(\chi_nV_{1,0})^{p-1-\alpha{\varepsilon_n}}\chi_n\delta_{jn}^
    {\frac{N}{p+1}}\Phi_n(\exp_{\xi^0_{jn}}(\delta_{jn} z+\delta_{jn}\eta_{jn})) \Phi_{1,0}^i\nonumber\\
    &+(q-\beta{\varepsilon_n})\delta_{jn}^{N-\frac{N(q-\beta{\varepsilon_n})}{q+1}-\frac{N}{q+1}}a_n(\chi_nU_{1,0})^{q-1-\beta{\varepsilon_n}}\chi_n\delta_{jn}^{\frac{N}{q+1}}\Psi_n(\exp_{\xi^0_{jn}}(\delta_{jn} z+\delta_{jn}\eta_{jn})) \Psi_{1,0}^i\Big]dz\nonumber\\
    =&\sum\limits_{m=1}^k\delta_{jm}\int\limits _{B(0,r_0/\delta_{jn})} \Big[(p-\alpha{\varepsilon_n})\delta_{jn}^{N-\frac{N(p-\alpha{\varepsilon_n})}{p+1}-\frac{N}{p+1}}a_n(\chi_nV_{1,0})^{p-1-\alpha{\varepsilon_n}}
    \widetilde{\Phi}_n(z) \Phi_{1,0}^i\nonumber\\
    &+(q-\beta{\varepsilon_n})\delta_{jn}^{N-\frac{N(q-\beta{\varepsilon_n})}{q+1}-\frac{N}{q+1}}a_n(\chi_nU_{1,0})^{q-1-\beta{\varepsilon_n}}
    \widetilde{\Psi}_n(z) \Psi_{1,0}^i\Big]dz
    \nonumber
    \\ \rightarrow &\sum\limits_{m=1}^k\delta_{jm}\int\limits_{\mathbb{R}^N}a_n(pV_{1,0}^{p-1}\Phi_{1,0}^i\widetilde{\Phi}+qU_{1,0}^{q-1}\Psi_{1,0}^i\widetilde{\Psi})dz=0,\quad \text{as $n\rightarrow+\infty$}.
  \end{align}
It follows from \eqref{line3} and \eqref{line4} that for any $l=0,1,\cdots,N$ and $m=1,2,\cdots,k$, $c_{mn}^l\rightarrow 0$ as $n\rightarrow \infty$.

{\bf Step 3:} $(\widetilde{\Psi},\widetilde{\Phi})=(0,0)$.
For any $(\varphi,\psi)\in C_0^\infty(\mathbb{R}^N)\times C_0^\infty(\mathbb{R}^N)$ and $j=1,2,\cdots,k$, by the dominated convergence theorem, we obtain
\begin{equation*}
  (p-\alpha\varepsilon)\int\limits_{\{z\in \mathbb{R}^N :\varphi(z)\neq0 \}}a_n\big(\chi_n\delta_{jn}^{-\frac{N}{p+1}}V_{1,0}\big)^{p-1-\alpha\varepsilon}\Phi_n(\exp_{\xi_{jn}}(\delta_{jn}z+\delta_{jn}\eta_{jn}))\varphi dz\rightarrow p\int\limits_{\{z\in \mathbb{R}^N :\varphi(z)\neq0 \}}a_nV_{1,0}^{p-1}\widetilde{\Phi}\varphi dz,
\end{equation*}
and
\begin{equation*}
  (q-\beta\varepsilon)\int\limits_{\{z\in \mathbb{R}^N :\psi(z)\neq0 \}}a_n\big(\chi_n\delta_{jn}^{-\frac{N}{q+1}}U_{1,0}\big)^{q-1-\beta\varepsilon}\Psi_n(\exp_{\xi_{jn}}(\delta_{jn}z+\delta_{jn}\eta_{jn}))\psi dx\rightarrow q\int\limits_{\{z\in \mathbb{R}^N :\psi(z)\neq0 \}}a_nU_{1,0}^{q-1}\widetilde{\Psi}\psi dz.
\end{equation*}
as $n\rightarrow+\infty$.
Using \eqref{line1'}, $\|(P_n,K_n)\|\rightarrow0$, $c_{mn}^l\rightarrow 0$ as $n\rightarrow \infty$ for any $l=0,1,\cdots,N$ and $m=1,2,\cdots,k$, we deduce that
$(\widetilde{\Psi},\widetilde{\Phi}) $ satisfies
  \begin{align*}
 \left\{
  \begin{array}{ll}
  -\Delta\widetilde{\Psi}=pV_{1,0}^{p-1}\widetilde{\Phi},
    \quad\mbox{in $\mathbb{R}^N$},\\
   -\Delta\widetilde{\Phi}=qU_{1,0}^{q-1}\widetilde{\Psi},
   \quad\mbox{in $\mathbb{R}^N$}.
    \end{array}
    \right.
  \end{align*}
 This together with \eqref{line2} and Lemma \ref{nonde} yields that $(\widetilde{\Psi},\widetilde{\Phi})=(0,0)$.

  {\bf Step 4:}
  $\|\mathcal{I}^*\big(a(x)f'_{\varepsilon_n}(\mathcal{H}_{\bar{\delta_n},\bar{\xi^0_n},\bar{\eta_n}})\Phi_n,a(x)g'_{\varepsilon_n}(\mathcal{W}_{\bar{\delta_n},\bar{\xi^0_n},\bar{\eta_n}})\Psi_n\big)\|\rightarrow 0$ as $n\rightarrow \infty$. By \eqref{em}, we know
  \begin{align*}
    &\|\mathcal{I}^*\big(a(x)f'_{\varepsilon_n}(\mathcal{H}_{\bar{\delta_n},\bar{\xi^0_n},\bar{\eta_n}})\Phi_n,a(x)g'_{\varepsilon_n}(\mathcal{W}_{\bar{\delta_n},\bar{\xi^0_n},\bar{\eta_n}})\Psi_n\big)\|
    \\ \leq& C\big\|a(x)f'_{\varepsilon_n}(\mathcal{H}_{\bar{\delta_n},\bar{\xi^0_n},\bar{\eta_n}})\Phi_n\big\|_{\frac{p+1}{p}}+C\big\|a(x)g'_{\varepsilon_n}(\mathcal{W}_{\bar{\delta_n},\bar{\xi^0_n},\bar{\eta_n}})\Psi_n\big\|_{\frac{q+1}{q}}.
  \end{align*}
  For any fixed $R>0$ and $j=1,2,\cdots,k$, by the H\"{o}lder inequality, $\widetilde{\Phi}_n\rightarrow0$ in $L_{loc}^{\frac{p+1}{1+\alpha{\varepsilon_n}}}({\mathbb{R}^N})$ and $\widetilde{\Psi}_n\rightarrow0$ in $L_{loc}^{\frac{q+1}{1+\beta{\varepsilon_n}}}({\mathbb{R}^N})$, we have
  \begin{align*}
   & \big\|a(x)f'_{\varepsilon_n}(\mathcal{H}_{\bar{\delta_n},\bar{\xi_n}})\Phi_n\big\|_{\frac{p+1}{p}}^{\frac{p+1}{p}}\\
    =&\int\limits_{\mathcal{M}}\big|(p-\alpha{\varepsilon_n})a(x)\mathcal{H}_{\bar{\delta_n},\bar{\xi^0_n},\bar{\eta_n}}^{p-1-\alpha{\varepsilon_n}}\Phi_n\big|^{\frac{p+1}{p}}d v_g\\
    =&\sum\limits_{j=1}^k
    \delta_{jn}^{\frac{N \alpha {\varepsilon_n}}{p}}\int\limits_{B(0,r_0/\delta_{jn})}\big|(p-\alpha{\varepsilon_n})a_n\chi_n^{p-2-\alpha\varepsilon}V_{1,0}^{p-1-\alpha{\varepsilon_n}} \chi_n\delta_{jn}^{\frac{N}{p+1}}\Phi_n(\exp_{\xi^0_{jn}}(\delta_{jn} z+\delta_{jn} \eta_{jn}))\big|^{\frac{p+1}{p}}dz\\
    =&\sum\limits_{j=1}^k\delta_{jn}^{\frac{N \alpha {\varepsilon_n}}{p}}\int\limits_{B(0,r_0/\delta_{jn})}\big|(p-\alpha{\varepsilon_n})a_n\chi_n^{p-2-\alpha\varepsilon}V_{1,0}^{p-1-\alpha{\varepsilon_n}} \widetilde{\Phi}_n(z)\big|^{\frac{p+1}{p}}dz\\
    \leq & C\Big(\int\limits_{B(0,r_0/\delta_{jn})}V_{1,0} ^{p+1}dz\Big)^{\frac{p-1-\alpha{\varepsilon_n}}{p}}\Big(\int\limits_{B(0,r_0/\delta_{jn})}|\widetilde{\Phi}_n(z)|^{\frac{p+1}{1+\alpha{\varepsilon_n}}}\Big)^{\frac{1+\alpha{\varepsilon_n}}{p}}\\
    \leq &C\Big(\int\limits_{B(0,R)}|\widetilde{\Phi}_n(z)|^{\frac{p+1}{1+\alpha{\varepsilon_n}}}\Big)^{\frac{1+\alpha{\varepsilon_n}}{p}}
    +C\varepsilon_n^{\frac{[(N-2)p-2](p-1-\alpha\varepsilon_n)}{2p}}\rightarrow0,\quad \text{as $n\rightarrow+\infty$},
  \end{align*}
  and
  \begin{align*}
   & \big\|a(x)g'_{\varepsilon_n}(\mathcal{W}_{\bar{\delta_n},\bar{\xi^0_n},\bar{\eta_n}})\Psi_n\big\|_{\frac{q+1}{q}}^{\frac{q+1}{q}}\\
    =&\int\limits_{\mathcal{M}}\big|(q-\beta{\varepsilon_n})a(x)\mathcal{W}_{\bar{\delta_n},\bar{\xi^0_n},\bar{\eta_n}}^{q-1-\beta{\varepsilon_n}}\Psi_n\big|^{\frac{q+1}{q}}d v_g\\
    =&\sum\limits_{j=1}^k
    \delta_{jn}^{\frac{N \beta {\varepsilon_n}}{q}}\int\limits_{B(0,r_0/\delta_{jn})}\big|(q-\beta{\varepsilon_n})a_n\chi_n^{q-2-\beta\varepsilon}U_{1,0}^{q-1-\beta{\varepsilon_n}} \chi_n\delta_{jn}^{\frac{N}{q+1}}\Psi_n(\exp_{\xi_{jn}}(\delta_{jn} z+\delta_{jn}\eta_{jn}))\big|^{\frac{q+1}{q}}dz\\
    =&\sum\limits_{j=1}^k\delta_{jn}^{\frac{N \beta {\varepsilon_n}}{q}}\int\limits_{B(0,r_0/\delta_{jn})}\big|(q-\beta{\varepsilon_n})a_n\chi_n^{q-2-\beta\varepsilon}U_{1,0}^{q-1-\beta{\varepsilon_n}} \widetilde{\Psi}_n(z)\big|^{\frac{q+1}{q}}dz\\
    \leq & C\Big(\int\limits_{B(0,r_0/\delta_{jn})}U_{1,0} ^{q+1}dz\Big)^{\frac{q-1-\beta{\varepsilon_n}}{q}}\Big(\int\limits_{B(0,r_0/\delta_{jn})}|\widetilde{\Psi}_n(z)|^{\frac{q+1}{1+\beta{\varepsilon_n}}}\Big)^
    {\frac{1+\beta{\varepsilon_n}}{q}}\\
    \leq &
    \left\{
    \begin{array}{ll}
    \displaystyle C\Big(\int\limits_{B(0,R)}|\widetilde{\Psi}_n(z)|^{\frac{q+1}{1+\beta{\varepsilon_n}}}\Big)^{\frac{1+\beta{\varepsilon_n}}{q}
    }+C\varepsilon_n^{\frac{[(N-2)q-2](q-1-\beta\varepsilon_n)}{2q}}\rightarrow0,\quad &\text{as $n\rightarrow+\infty$,\ \ if $p>\frac{N}{N-2}$},\\
    \displaystyle C\Big(\int\limits_{B(0,R)}|\widetilde{\Psi}_n(z)|^{\frac{q+1}{1+\beta{\varepsilon_n}}}\Big)^{\frac{1+\beta{\varepsilon_n}}{q}
    }+C\varepsilon_n^{\frac{[(N-3)q-3](q-1-\beta\varepsilon_n)}{2q}}\rightarrow0,\quad &\text{as $n\rightarrow+\infty$,\ \ if $p=\frac{N}{N-2}$},\\
    \displaystyle C\Big(\int\limits_{B(0,R)}|\widetilde{\Psi}_n(z)|^{\frac{q+1}{1+\beta{\varepsilon_n}}}\Big)^{\frac{1+\beta{\varepsilon_n}}{q}
    }+C\varepsilon_n^{\frac{Np(q-1-\beta\varepsilon_n)}{2q}}\rightarrow0,\quad &\text{as $n\rightarrow+\infty$, \ \ if $p<\frac{N}{N-2}$}.\\
    \end{array}
    \right.
  \end{align*}

From the above arguments,  we get $\|(\Psi_n,\Phi_n)\|\rightarrow0$ as $n\rightarrow+\infty$, which is an absurd. Thus, we complete the proof.
\end{proof}

For any $\varepsilon>0$ small enough,  $\bar{t}\in (\mathbb{R}^+)^k$, and $\bar{\eta}\in (\mathbb{R}^N)^k$, if $\bar{\delta}$ is as in \eqref{solu'}, then equation \eqref{tou2} is equivalent to
\begin{equation*}
  \mathcal{L}_{\varepsilon,\bar{t},\bar{\xi^0},\bar{\eta}}(\Psi,\Phi)=\mathcal{N}_{\varepsilon,\bar{t},\bar{\xi^0},\bar{\eta}}(\Psi,\Phi)+\mathcal{R}_{\varepsilon,\bar{t},\bar{\xi^0},\bar{\eta}},
\end{equation*}
where
\begin{align}\label{defn}
\mathcal{N}_{\varepsilon,\bar{t},\bar{\xi^0},\bar{\eta}}(\Psi,\Phi)=
  \Pi_{\bar{\delta},\bar{\xi^0},\bar{\eta}}^\bot \mathcal{I}^* \Big\{
  &a(x)\big[f_\varepsilon(\mathcal{H}_{\bar{\delta},\bar{\xi^0},\bar{\eta}}+\Phi)-f_\varepsilon(\mathcal{H}_{\bar{\delta},\bar{\xi^0},\bar{\eta}})-f'_\varepsilon(\mathcal{H}_{\bar{\delta},\bar{\xi^0},\bar{\eta}})\Phi\big], \nonumber \\& a(x)\big[ g_\varepsilon(\mathcal{W}_{\bar{\delta},\bar{\xi^0},\bar{\eta}}+\Psi)-g_\varepsilon(\mathcal{W}_{\bar{\delta},\bar{\xi^0},\bar{\eta}})-g'_\varepsilon(\mathcal{W}_{\bar{\delta},\bar{\xi^0},\bar{\eta}})\Psi\big]
  \Big\},
\end{align}
and
\begin{equation}\label{defr}
\mathcal{R}_{\varepsilon,\bar{t},\bar{\xi^0},\bar{\eta}}=
  \Pi_{\bar{\delta},\bar{\xi^0},\bar{\eta}}^\bot\Big[
  \mathcal{I}^*\big(a(x)f_\varepsilon(\mathcal{H}_{\bar{\delta},\bar{\xi^0},\bar{\eta}}),a(x)g_\varepsilon(\mathcal{W}_{\bar{\delta},\bar{\xi^0},\bar{\eta}})\big)-\big(\mathcal{W}_{\bar{\delta},\bar{\xi^0},\bar{\eta}},
  \mathcal{H}_{\bar{\delta},\bar{\xi^0},\bar{\eta}}\big)
  \Big].
\end{equation}
In the following lemma, we estimate the reminder term $\mathcal{R}_{\varepsilon,\bar{t},\bar{\xi^0},\bar{\eta}}$.
\begin{lemma}\label{error}
Under the assumptions of Theorem \ref{th}, if  $\bar{\delta}$ is as in \eqref{solu'}, then for any $\varepsilon>0$ small enough, there holds
\begin{equation*}
  \|\mathcal{R}_{\varepsilon,\bar{t},\bar{\xi^0},\bar{\eta}}\|\leq C\varepsilon|\log \varepsilon|,
\end{equation*}
where $\mathcal{R}_{\varepsilon,\bar{t},\bar{\xi^0},\bar{\eta}}$ is as in \eqref{defr}.
\end{lemma}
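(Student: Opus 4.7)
The plan is to use \eqref{em} combined with the boundedness of the projection $\Pi_{\bar\delta,\bar{\xi^0},\bar\eta}^\bot$. First write $(\mathcal W_{\bar\delta,\bar{\xi^0},\bar\eta},\mathcal H_{\bar\delta,\bar{\xi^0},\bar\eta})=\mathcal I^*(F_1,F_2)$ with
$$F_1=-\mathrm{div}(a\nabla_g\mathcal W_{\bar\delta,\bar{\xi^0},\bar\eta})+ah\mathcal W_{\bar\delta,\bar{\xi^0},\bar\eta},\qquad F_2=-\mathrm{div}(a\nabla_g\mathcal H_{\bar\delta,\bar{\xi^0},\bar\eta})+ah\mathcal H_{\bar\delta,\bar{\xi^0},\bar\eta},$$
so that $\mathcal R_{\varepsilon,\bar t,\bar{\xi^0},\bar\eta}=\Pi^\bot\mathcal I^*\bigl(af_\varepsilon(\mathcal H_{\bar\delta,\bar{\xi^0},\bar\eta})-F_1,\,ag_\varepsilon(\mathcal W_{\bar\delta,\bar{\xi^0},\bar\eta})-F_2\bigr)$, and \eqref{em} reduces the matter to bounding
$$\|af_\varepsilon(\mathcal H_{\bar\delta,\bar{\xi^0},\bar\eta})-F_1\|_{\frac{p+1}{p}}+\|ag_\varepsilon(\mathcal W_{\bar\delta,\bar{\xi^0},\bar\eta})-F_2\|_{\frac{q+1}{q}}.$$
Since $d_g(\xi_j^0,\xi_m^0)>r_0$ for $j\ne m$, the supports of the bubbles $(W_{\delta_j,\xi_j^0,\eta_j},H_{\delta_j,\xi_j^0,\eta_j})$ are pairwise disjoint once $\varepsilon$ is small, so both differences split as sums over $j=1,\dots,k$ of single-bubble residuals that can be treated independently.

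For a fixed index $j$, I will pass to the normal-coordinate chart $x=\exp_{\xi_j^0}(\delta_j z+\delta_j\eta_j)$ and rescale, exactly as in the proof of Lemma \ref{line}. In the resulting flat coordinates, with $g_n,h_n,a_n,\chi_n$ as in that proof, the operator associated with $F_1$ becomes $-\Delta_{g_n}+\delta_j^2h_n$ applied to $\chi_n U_{1,0}$, while $f_\varepsilon(H_{\delta_j,\xi_j^0,\eta_j})$ transforms into a scalar multiple of $(\chi_n V_{1,0})^{p-\alpha\varepsilon}$. Using the flat-space identity $-\Delta U_{1,0}=V_{1,0}^p$, the rescaled residual splits into four contributions: the metric perturbation $(\Delta_{g_n}-\Delta_{\mathrm{Eucl}})(\chi_n U_{1,0})$, pointwise dominated by $C\delta_j^2(|z|^2|D^2U_{1,0}|+|z||\nabla U_{1,0}|)$ on the support of $\chi_n$; the linear term $\delta_j^2h_n\chi_n U_{1,0}$, of order $\delta_j^2=O(\varepsilon)$; the exponent perturbation $V_{1,0}^{p-\alpha\varepsilon}-V_{1,0}^p$, controlled via Lemma \ref{gs} by $C\varepsilon\,V_{1,0}^p|\log V_{1,0}|$ modulo higher-order corrections; and the cutoff boundary error, supported in the annulus $|z|\sim r_0/\delta_j$ and handled by the decay estimates of Lemmas \ref{jian1}--\ref{jian2}.

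The logarithmic factor surfaces when the exponent perturbation is expressed in the unscaled variable: since $H_{\delta_j,\xi_j^0,\eta_j}=\delta_j^{-N/(p+1)}V_{1,0}(\delta_j^{-1}\exp_{\xi_j^0}^{-1}(x)-\eta_j)$ and $\delta_j=\sqrt{\varepsilon t_j}$, we have $\log H_{\delta_j,\xi_j^0,\eta_j}=-\frac{N}{2(p+1)}\log(\varepsilon t_j)+\log V_{1,0}$, which is $O(|\log\varepsilon|)$. Taking the $L^{(p+1)/p}$-norm with respect to the volume element $dv_g=(1+O(\delta_j^2|z|^2))dz$ and using both $\int_{\R^N}V_{1,0}^{p+1}<\infty$ and $L_6<\infty$, the exponent-perturbation contribution is bounded by $C\varepsilon|\log\varepsilon|$, while the metric, potential and cutoff terms are $O(\varepsilon)$. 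The $L^{(q+1)/q}$-estimate for the second residual is symmetric; its integrability requires $L_7<\infty$ together with the decay of $U_{1,0}$ at infinity, which is precisely what the trichotomy \eqref{condi} guarantees through Lemma \ref{jian1}.

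The main obstacle is the careful bookkeeping of the $|\log\varepsilon|$ factor: it must be pulled out \emph{after} the change of variables, so that the remaining Lebesgue integral is over a fixed function in $\R^N$ rather than over a shrinking domain, and one must verify that this extraction is compatible with the Jacobian expansion of $dv_g$ and with the cutoff $\chi_n$. A secondary technical point is that the cutoff-induced error must be uniformly $O(\varepsilon)$ in all three subcases of \eqref{condi}; here the sharper subcritical decay $r^{2-(N-2)p}$ (with a logarithmic correction when $p=N/(N-2)$) for $U_{1,0}$ in Lemma \ref{jian1} is exactly what prevents the cutoff annulus from producing a contribution larger than $\delta_j^2$.
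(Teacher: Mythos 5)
Your overall strategy (reduce via \eqref{em} and the disjointness of the bubble supports to single-bubble residuals in $L^{\frac{p+1}{p}}\times L^{\frac{q+1}{q}}$, rescale, and extract the $|\log\varepsilon|$ factor from $\log H_{\delta_j,\xi^0_j,\eta_j}=-\tfrac{N}{2(p+1)}\log(\varepsilon t_j)+\log V_{1,0}$) is the same as the paper's, and the treatment of the metric, potential, exponent and cutoff contributions is sound. However, there is a genuine gap in your decomposition of the residual: you assert that after rescaling ``the operator associated with $F_1$ becomes $-\Delta_{g_n}+\delta_j^2h_n$ applied to $\chi_nU_{1,0}$,'' but $-\mathrm{div}(a\nabla_g u)=-a\Delta_g u-\nabla_g a\cdot\nabla_g u$, so the residual contains the additional first-order drift term $\nabla_g a\cdot\nabla_g W_{\delta_j,\xi^0_j,\eta_j}$ (and $\nabla_g a\cdot\nabla_g H_{\delta_j,\xi^0_j,\eta_j}$ for the second component), which appears in none of your four contributions. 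This term is not harmless: since $\nabla_g W_{\delta_j,\xi^0_j,\eta_j}$ scales like $\delta_j^{-1-\frac{N}{q+1}}\nabla U_{1,0}$, a generic bounded $\nabla_g a$ would only give $\|\nabla_g a\cdot\nabla_g W_{\delta_j,\xi^0_j,\eta_j}\|_{\frac{p+1}{p}}=O(\delta_j)=O(\sqrt{\varepsilon})$, which destroys the claimed bound $C\varepsilon|\log\varepsilon|$.

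The paper isolates exactly these terms ($II_j$ and $IV_j$ in its proof) and controls them by invoking the hypothesis that $\xi^0_j$ is a critical point of $a$: then $|\nabla a(\exp_{\xi^0_j}(y))|\leq C|y|$, and after the substitution $y=\delta_j z+\delta_j\eta_j$ the extra factor $\delta_j|z+\eta_j|$ upgrades the estimate to $II_j^{\frac{p+1}{p}}=O(\delta_j^{\frac{2(p+1)}{p}})=O(\varepsilon^{\frac{p+1}{p}})$, i.e.\ $II_j=O(\varepsilon)$, provided the weighted integrals $\int|z|^{\frac{p+1}{p}}|\nabla U_{1,0}(z-\eta_j)|^{\frac{p+1}{p}}dz$ converge — which again requires the case analysis on $p$ versus $\tfrac{N}{N-2}$ through Lemma \ref{jian2}. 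You should add this term to your decomposition and note that this is the (only) place in the lemma where the critical-point assumption on $a$ at $\bar{\xi^0}$ is used; without it the lemma is false as stated. The remaining zero-order pieces of your argument (which the paper delegates to \cite[Lemma 4.2]{CW}) are otherwise consistent with the paper's proof.
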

\begin{proof}
 Since $d_g(\xi^0_j,\xi^0_m)>r_0$ for any $j\neq m$, by \eqref{em} and the definition of  the function $\chi(z)$, there exists $C>0$ such that 
\begin{align*}
  \|\mathcal{R}_{\varepsilon,\bar{t},\bar{\xi^0},\bar{\eta}}\|\leq & C\big\|a(x)f_\varepsilon(\mathcal{H}_{\bar{\delta},\bar{\xi^0},\bar{\eta}})+a(x)\Delta_g  \mathcal{W}_{\bar{\delta},\bar{\xi^0},\bar{\eta}}+\nabla_g a(x)\cdot\nabla_g \mathcal{W}_{\bar{\delta},\bar{\xi^0},\bar{\eta}}-a(x)h\mathcal{W}_{\bar{\delta},\bar{\xi^0},\bar{\eta}}\big\|_{{\frac{p+1}{p}}}
  \\&+C\big\|a(x)g_\varepsilon(\mathcal{W}_{\bar{\delta},\bar{\xi^0},\bar{\eta}})+a(x)\Delta_g \mathcal{H}_{\bar{\delta},\bar{\xi^0},\bar{\eta}}+\nabla_ga(x)\cdot\nabla_g \mathcal{H}_{\bar{\delta},\bar{\xi^0},\bar{\eta}}-a(x)h\mathcal{H}_{\bar{\delta},\bar{\xi^0},\bar{\eta}}\big\|_{{\frac{q+1}{q}}}\\
  \leq &C\big\|f_\varepsilon(\mathcal{H}_{\bar{\delta},\bar{\xi^0},\bar{\eta}})+\Delta_g  \mathcal{W}_{\bar{\delta},\bar{\xi^0},\bar{\eta}}-h\mathcal{W}_{\bar{\delta},\bar{\xi^0},\bar{\eta}}\big\|_{{\frac{p+1}{p}}}+\big\|\nabla_g a(x)\cdot\nabla_g \mathcal{W}_{\bar{\delta},\bar{\xi^0},\bar{\eta}}\big\|_{{\frac{p+1}{p}}}
  \\&+C\big\|g_\varepsilon(\mathcal{W}_{\bar{\delta},\bar{\xi^0},\bar{\eta}})+\Delta_g \mathcal{H}_{\bar{\delta},\bar{\xi^0},\bar{\eta}}-h\mathcal{H}_{\bar{\delta},\bar{\xi^0},\bar{\eta}}\big\|_{{\frac{q+1}{q}}}+\big\|\nabla_ga(x)\cdot\nabla_g \mathcal{H}_{\bar{\delta},\bar{\xi^0},\bar{\eta}}\big\|_{{\frac{q+1}{q}}}\\
  =&C\sum\limits_{j=1}^k\big\|f_\varepsilon(H_{\delta_j,\xi^0_j,\eta_j})+\Delta_g W_{\delta_j,\xi^0_j,\eta_j}-hW_{\delta_j,\xi^0_j,\eta_j}\big\|_{{\frac{p+1}{p}}}+C\sum\limits_{j=1}^k
  \big\|\nabla_g a(x)\cdot\nabla_g {W}_{\delta_j,\xi^0_j,\eta_j}\big\|_{{\frac{p+1}{p}}}
  \\&+C\sum\limits_{j=1}^k\big\|g_\varepsilon(W_{\delta_j,\xi^0_j,\eta_j})+\Delta_g H_{\delta_j,\xi^0_j,\eta_j}-hH_{\delta_j,\xi^0_j,\eta_j}\big\|_{{\frac{q+1}{q}}}+C\sum\limits_{j=1}^k
  \big\|\nabla_g a(x)\cdot\nabla_g {H}_{\delta_j,\xi^0_j,\eta_j}\big\|_{{\frac{q+1}{q}}}\\
  =&:C\sum\limits_{j=1}^k(I_j+II_j+III_j+IV_j).
\end{align*}
Similar to \cite[Lemma 4.2]{CW}, we can prove that $I_j=III_j=O(\varepsilon |\log \varepsilon|)$. It remains to estimate $II_j$ and $IV_j$, $j=1,2,\cdots,k$.

For any fixed $R>0$ and $j=1,2,\cdots,k$, since $\xi^0_j$ is a non-degenerate critical point of $a(x)$, by Lemmas \ref{jian1} and \ref{jian2}, we have
\begin{align}\label{new1}
  II_j^{{\frac{p+1}{p}}}\leq& C\max\limits_{1\leq l\leq N}\int\limits_{B(0,r_0)}|y|^{\frac{p+1}{p}}\big|\partial _{y_l}\big(\chi(y)\delta_j^{-\frac{N}{q+1}}U_{1,0}(\delta_j^{-1}y-\eta_j)\big)\big|^{\frac{p+1}{p}}dy\nonumber\\
  \leq& C\int\limits_{B(0,r_0/\delta_j)\backslash B(0,r_0/2\delta_j)}|\delta_j z|^{\frac{p+1}{p}}\big|\delta_j^{-\frac{N}{q+1}}U_{1,0}(z-\eta_j)\big|^{\frac{p+1}{p}}\delta_j^Ndz\nonumber\\
  &+C\max\limits_{1\leq l\leq N}\int\limits_{B(0,r_0/\delta_j)}|\delta_jz|^{\frac{p+1}{p}}\big|\delta_j^{-\frac{N}{q+1}-1}\partial _{y_l}U_{1,0}(z-\eta_j)\big|^{\frac{p+1}{p}}\delta_j^Ndz\nonumber\\
  \leq&
  C\delta_j^{N+\frac{p+1}{p}-\frac{N(p+1)}{p(q+1)}}\int\limits_{B(0,r_0/\delta_j)\backslash B(0,r_0/2\delta_j)}| z|^{\frac{p+1}{p}}\big|U_{1,0}(z-\eta_j)\big|^{\frac{p+1}{p}}dz\nonumber\\
  &+C\delta_j^{N-\frac{N(p+1)}{p(q+1)}}\max\limits_{1\leq l\leq N}\int\limits_{B(0,R)}|z|^{\frac{p+1}{p}}\big|\partial _{y_l}U_{1,0}(z-\eta_j)\big|^{\frac{p+1}{p}}dz\nonumber\\&+
  C\delta_j^{N-\frac{N(p+1)}{p(q+1)}}\max\limits_{1\leq l\leq N}\int\limits_{B(0,r_0/\delta_j)\backslash B(0,R)}|z|^{\frac{p+1}{p}}\big|\partial _{y_l}U_{1,0}(z-\eta_j)\big|^{\frac{p+1}{p}}dz
  \nonumber\\
    = &
    \left\{
    \begin{array}{ll}
    O(\delta_j^{\frac{N}{p}})+O\big(\delta_j^{N-\frac{N(p+1)}{p(q+1)}}\big),\quad &\text{if $p>\frac{N}{N-2}$;}\nonumber\\
    O(\delta_j^{\frac{N-p-1}{p}})+O\big(\delta_j^{N-\frac{N(p+1)}{p(q+1)}}\big),\quad &\text{if $p=\frac{N}{N-2}$;}\nonumber\\
    O(\delta_j^{\frac{N(p+1)}{q+1}})+O\big(\delta_j^{N-\frac{N(p+1)}{p(q+1)}}\big),\quad &\text{if $p<\frac{N}{N-2}$,}
    \end{array}
    \right.
    \nonumber\\=&O(\delta_j^{\frac{2(p+1)}{p}})=O(\varepsilon^{\frac{p+1}{p}}).
\end{align}
Similarly, it holds
\begin{align}\label{new2}
  IV_j^{{\frac{q+1}{q}}}\leq&
  C\delta_j^{N+\frac{q+1}{q}-\frac{N(q+1)}{q(p+1)}}\int\limits_{B(0,r_0/\delta_j)\backslash B(0,r_0/2\delta_j)}| z|^{\frac{q+1}{q}}\big|V_{1,0}(z-\eta_j)\big|^{\frac{q+1}{q}}dz\nonumber\\
  &+C\delta_j^{N-\frac{N(q+1)}{q(p+1)}}\max\limits_{1\leq l\leq N}\int\limits_{B(0,R)}|z|^{\frac{q+1}{q}}\big|\partial _{y_l}V_{1,0}(z-\eta_j)\big|^{\frac{q+1}{q}}dz\nonumber\\&+
  C\delta_j^{N-\frac{N(q+1)}{q(p+1)}}\max\limits_{1\leq l\leq N}\int\limits_{B(0,r_0/\delta_j)\backslash B(0,R)}|z|^{\frac{q+1}{q}}\big|\partial _{y_l}V_{1,0}(z-\eta_j)\big|^{\frac{q+1}{q}}dz
 \nonumber \\
    = &
    O(\delta_j^{\frac{N}{q}})+O\big(\delta_j^{N-\frac{N(q+1)}{q(p+1)}}\big)=O(\varepsilon^{\frac{q+1}{q}}).
\end{align}
This ends the proof.
\end{proof}
\noindent{\bf Proof of Proposition \ref{propo1}.}
By using Lemmas  \ref{line} and \ref{error}, a similar discussion of \cite[Proposition 3.1]{CW} completes the proof.
\qed

\section{Proof of Proposition \ref{propo2}}\label{sec5}
This section is devoted to the proof of Proposition \ref{propo2}. As a first step, we have
\begin{lemma}
Under the assumptions of Theorem \ref{th}, if $\bar{\delta}$ is as in \eqref{solu'}, then for any $\varepsilon>0$ small enough, if $(\bar{t},\bar{\eta})$ is a critical point of the functional $\widetilde{\mathcal{J}}_\varepsilon$, then $\big(\mathcal{W}_{\bar{\delta},\bar{\xi^0},\bar{\eta}}+\Psi_{\varepsilon,\bar{t},\bar{\xi^0},\bar{\eta}}
  ,\mathcal{H}_{\bar{\delta},\bar{\xi^0},\bar{\eta}}+\Phi_{\varepsilon,\bar{t},\bar{\xi^0},\bar{\eta}}\big)$ is a  solution of system \eqref{prob}, or equivalently of  \eqref{repro}.
\end{lemma}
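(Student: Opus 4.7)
The plan is the standard closing step of a Lyapunov--Schmidt reduction. By Proposition \ref{propo1}, for every admissible $(\bar{t},\bar{\eta})$ the function $(\Psi_{\varepsilon,\bar{t},\bar{\xi^0},\bar{\eta}},\Phi_{\varepsilon,\bar{t},\bar{\xi^0},\bar{\eta}})\in \mathcal{Z}_{\bar{\delta},\bar{\xi^0},\bar{\eta}}$ already satisfies the projected equation \eqref{tou2}. Hence the full residual
\[
\mathcal{F}(\bar{t},\bar{\eta}):=\bigl(\mathcal{W}_{\bar{\delta},\bar{\xi^0},\bar{\eta}}+\Psi_{\varepsilon,\bar{t},\bar{\xi^0},\bar{\eta}},\mathcal{H}_{\bar{\delta},\bar{\xi^0},\bar{\eta}}+\Phi_{\varepsilon,\bar{t},\bar{\xi^0},\bar{\eta}}\bigr)-\mathcal{I}^*\bigl(a f_\varepsilon(\cdots),a g_\varepsilon(\cdots)\bigr)
\]
lies in $\mathcal{Y}_{\bar{\delta},\bar{\xi^0},\bar{\eta}}$, so there exist unique coefficients $c_{lm}=c_{lm}(\varepsilon,\bar{t},\bar{\eta})$, $l=0,\dots,N$, $m=1,\dots,k$, with
\[
\mathcal{F}(\bar{t},\bar{\eta})=\sum_{l=0}^{N}\sum_{m=1}^{k} c_{lm}\bigl(\Psi^l_{\delta_m,\xi^0_m,\eta_m},\Phi^l_{\delta_m,\xi^0_m,\eta_m}\bigr).
\]
The goal is to show that $c_{lm}=0$ for every $l,m$ whenever $(\bar{t},\bar{\eta})$ is a critical point of $\widetilde{\mathcal{J}}_\varepsilon$; then $(\mathcal{W}+\Psi,\mathcal{H}+\Phi)$ solves \eqref{repro}.

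First I would differentiate the identity \eqref{defj} defining $\widetilde{\mathcal{J}}_\varepsilon$: since $(\Psi_{\varepsilon,\bar{t},\bar{\xi^0},\bar{\eta}},\Phi_{\varepsilon,\bar{t},\bar{\xi^0},\bar{\eta}})$ is $C^1$ in $(\bar{t},\bar{\eta})$, the chain rule gives, for any $j=1,\dots,k$ and any $\tau\in\{t_j\}\cup\{\eta_j^{(1)},\dots,\eta_j^{(N)}\}$,
\[
\partial_\tau \widetilde{\mathcal{J}}_\varepsilon(\bar{t},\bar{\eta})=\mathcal{J}_\varepsilon'\bigl(\mathcal{W}+\Psi,\mathcal{H}+\Phi\bigr)\bigl[\partial_\tau(\mathcal{W}+\Psi),\partial_\tau(\mathcal{H}+\Phi)\bigr].
\]
Rewriting $\mathcal{J}_\varepsilon'$ via the duality pairing $\langle\cdot,\cdot\rangle_h$ and using the representation of $\mathcal{F}$ above yields
\[
\partial_\tau \widetilde{\mathcal{J}}_\varepsilon(\bar{t},\bar{\eta})=\sum_{l=0}^{N}\sum_{m=1}^{k}c_{lm}\bigl\langle\bigl(\Psi^l_{\delta_m,\xi^0_m,\eta_m},\Phi^l_{\delta_m,\xi^0_m,\eta_m}\bigr),\bigl(\partial_\tau(\mathcal{W}+\Psi),\partial_\tau(\mathcal{H}+\Phi)\bigr)\bigr\rangle_h.
\]
Thus, at a critical point $(\bar{t},\bar{\eta})$ the coefficients $c_{lm}$ satisfy a homogeneous linear system with matrix
\[
A_\varepsilon=\Bigl(\bigl\langle\bigl(\Psi^l_{\delta_m,\xi^0_m,\eta_m},\Phi^l_{\delta_m,\xi^0_m,\eta_m}\bigr),\bigl(\partial_\tau(\mathcal{W}+\Psi),\partial_\tau(\mathcal{H}+\Phi)\bigr)\bigr\rangle_h\Bigr)_{(l,m),\tau}.
\]

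The main task is to prove that $A_\varepsilon$ is invertible for all small $\varepsilon>0$. To this end, I would expand $\partial_\tau \mathcal{W}_{\bar{\delta},\bar{\xi^0},\bar{\eta}}$ and $\partial_\tau \mathcal{H}_{\bar{\delta},\bar{\xi^0},\bar{\eta}}$ explicitly in terms of the bubble basis $(\Psi^i_{\delta_j,\xi^0_j,\eta_j},\Phi^i_{\delta_j,\xi^0_j,\eta_j})$; indeed, by definition of $(W_{\delta,\xi^0,\eta},H_{\delta,\xi^0,\eta})$ in \eqref{fun1}, differentiating in $t_j$ recovers (up to a non-zero multiplicative factor and an error supported where the cutoff is non-constant) the dilation mode $(\Psi^0_{\delta_j,\xi^0_j,\eta_j},\Phi^0_{\delta_j,\xi^0_j,\eta_j})$, while differentiating in $\eta_j^{(l)}$ recovers the translation mode $(\Psi^l_{\delta_j,\xi^0_j,\eta_j},\Phi^l_{\delta_j,\xi^0_j,\eta_j})$ for $l=1,\dots,N$. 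Hence, using the computations \eqref{gu1}--\eqref{gu4} and the bound $\|(\Psi,\Phi)\|\le C\varepsilon|\log\varepsilon|$ from Proposition \ref{propo1} (to control the $\partial_\tau\Psi,\partial_\tau\Phi$ contributions via the continuous differentiability statement), the matrix $A_\varepsilon$ splits as
\[
A_\varepsilon=D_\varepsilon+o(1),
\]
where $D_\varepsilon$ is block-diagonal in the index $m$ (thanks to $d_g(\xi^0_j,\xi^0_m)>r_0$ for $j\ne m$) with diagonal entries bounded away from zero by the non-degeneracy content of Lemma \ref{nonde}. Consequently $A_\varepsilon$ is invertible for $\varepsilon$ sufficiently small, the homogeneous system forces $c_{lm}=0$, and $(\mathcal{W}+\Psi,\mathcal{H}+\Phi)$ is a genuine solution of \eqref{repro}.

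The only delicate point is controlling the $\partial_\tau\Psi_{\varepsilon,\bar{t},\bar{\xi^0},\bar{\eta}}$ and $\partial_\tau\Phi_{\varepsilon,\bar{t},\bar{\xi^0},\bar{\eta}}$ terms, since a priori we only know smallness of $(\Psi,\Phi)$ in norm. Here one exploits the fact that $(\Psi,\Phi)\in \mathcal{Z}_{\bar{\delta},\bar{\xi^0},\bar{\eta}}$ so its pairing with the bubble directions produces only an $O(\varepsilon|\log\varepsilon|)$ contribution via integration by parts, which is absorbed in the $o(1)$ remainder. This is the step to be handled carefully, but it is standard in the Lyapunov--Schmidt framework and mirrors the argument in \cite{CW} already invoked in the proof of Proposition \ref{propo1}.
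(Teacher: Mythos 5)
Your proposal follows essentially the same route as the paper: represent the residual in $\mathcal{Y}_{\bar{\delta},\bar{\xi^0},\bar{\eta}}$ with coefficients $c_{lm}$, use the chain rule at a critical point of $\widetilde{\mathcal{J}}_\varepsilon$ to obtain a homogeneous linear system, identify $\partial_{t_m}\mathcal{W}$ and $\partial_{\eta_{ml}}\mathcal{W}$ with the bubble modes, bound the $\partial_\tau(\Psi,\Phi)$ pairings by differentiating the orthogonality condition, and conclude invertibility from \eqref{gu1}--\eqref{gu4}. The only slight imprecision is that $\partial_{t_m}\mathcal{W}$ equals $-\tfrac{1}{t_m}$ times the dilation mode \emph{plus} a combination $\sum_l\eta_{ml}$ of the translation modes (the paper's \eqref{ch1}), but this only makes the limiting matrix triangular rather than diagonal and does not affect the conclusion.
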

\begin{proof}
Let $(\bar{t},\bar{\eta})$ be a critical point of  $\widetilde{\mathcal{J}}_\varepsilon$, where $\bar{t}=(t_1,t_2,\cdots,t_k)\in (\mathbb{R}^+)^k$ and $\bar{\eta}=(\eta_1,\eta_2,\cdots,\eta_k)\in (\mathbb{R}^N)^k$.  Since $(\bar{t},\bar{\eta})$ be a critical point of  $\widetilde{\mathcal{J}}_\varepsilon$, for any  $l=1,2,\cdots,N$ and $m=1,2,\cdots,k$, there hold
\begin{equation*}
  \mathcal{J}'_{\varepsilon}\big(\mathcal{W}_{\bar{\delta},\bar{\xi^0},\bar{\eta}}+\Psi_{\varepsilon,\bar{t},\bar{\xi^0},\bar{\eta}},\mathcal{H}_{\bar{\delta},\bar{\xi^0},\bar{\eta}}+\Phi_{\varepsilon,\bar{t},\bar{\xi^0},\bar{\eta}}\big)
  \big(\partial _{t_m}\mathcal{W}_{\bar{\delta},\bar{\xi^0},\bar{\eta}}+\partial _{t_m}\Psi_{\varepsilon,\bar{t},\bar{\xi^0},\bar{\eta}},\partial _{t_m}\mathcal{H}_{\bar{\delta},\bar{\xi^0},\bar{\eta}}+\partial _{t_m}\Phi_{\varepsilon,\bar{t},\bar{\xi^0},\bar{\eta}}\big)=0,
\end{equation*}
and
\begin{equation*}
  \mathcal{J}'_{\varepsilon}\big(\mathcal{W}_{\bar{\delta},\bar{\xi^0},\bar{\eta}}+\Psi_{\varepsilon,\bar{t},\bar{\xi^0},\bar{\eta}},\mathcal{H}_{\bar{\delta},\bar{\xi^0},\bar{\eta}}+\Phi_{\varepsilon,\bar{t},\bar{\xi^0},\bar{\eta}}\big)
  \big(\partial _{\eta_{ml}}\mathcal{W}_{\bar{\delta},\bar{\xi^0},\bar{\eta}}+\partial _{\eta_{ml}}\Psi_{\varepsilon,\bar{t},\bar{\xi^0},\bar{\eta}},\partial _{\eta_{ml}}\mathcal{H}_{\bar{\delta},\bar{\xi^0},\bar{\eta}}+\partial _{\eta_{ml}}\Phi_{\varepsilon,\bar{t},\bar{\xi^0},\bar{\eta}}\big)=0.
\end{equation*}
For any $(\varphi,\psi)\in \mathcal{X}_{p,q}(\mathcal{M})$, by Proposition \ref{propo1}, there exist some constants $c_{01},c_{02},\cdots,c_{0k}$, $c_{11},c_{12},\cdots,c_{1k}$, $\cdots$, $c_{N1},c_{N2},\cdots,c_{Nk}$ such that
\begin{equation*}
  \mathcal{J}_\varepsilon'(\mathcal{W}_{\bar{\delta},\bar{\xi^0},\bar{\eta}}+\Psi_{\varepsilon,\bar{t},\bar{\xi^0},\bar{\eta}},\mathcal{H}_{\bar{\delta},\bar{\xi^0},\bar{\eta}}+
  \Phi_{\varepsilon,\bar{t},\bar{\xi^0},\bar{\eta}})(\varphi,\psi)=
  \sum\limits_{l=0}^N\sum\limits_{m=1}^kc_{lm}\big\langle(\Psi_{\delta_m,\xi^0_m,\eta_m}^l,\Phi^l_{\delta_m,\xi^0_m,\eta_m}),(\varphi,\psi)\big\rangle_h.
\end{equation*}
Let $\partial _s$ denote $\partial_ {t_m}$ or $\partial _{\eta_{ml}}$ for any $l=1,2,\cdots,N$ and $m=1,2,\cdots,k$. Then
\begin{align}\label{com}
0=&  \partial_s \widetilde{\mathcal{J}}_{\varepsilon}(\bar{t},\bar{\xi^0},\bar{\eta}) =\mathcal{J}'_{\varepsilon}\big(\mathcal{W}_{\bar{\delta},\bar{\xi^0},\bar{\eta}}+\Psi_{\varepsilon,\bar{t},\bar{\xi^0},\bar{\eta}},\mathcal{H}_{\bar{\delta},\bar{\xi^0},\bar{\eta}}+\Phi_{\varepsilon,\bar{t},\bar{\xi^0},\bar{\eta}}\big)
  \big(\partial _{s}\mathcal{W}_{\bar{\delta},\bar{\xi^0},\bar{\eta}}+\partial _{s}\Psi_{\varepsilon,\bar{t},\bar{\xi^0},\bar{\eta}},\partial _{s}\mathcal{H}_{\bar{\delta},\bar{\xi^0},\bar{\eta}}+\partial _{s}\Phi_{\varepsilon,\bar{t},\bar{\xi^0},\bar{\eta}}\big)\nonumber\\
  =& \big\langle \big(\mathcal{W}_{\bar{\delta},\bar{\xi^0},\bar{\eta}}+\Psi_{\varepsilon,\bar{t},\bar{\xi^0},\bar{\eta}}
  ,\mathcal{H}_{\bar{\delta},\bar{\xi^0},\bar{\eta}}+\Phi_{\varepsilon,\bar{t},\bar{\xi^0},\bar{\eta}}\big)
  -\mathcal{I}^*\big(a(x)f_\varepsilon(\mathcal{H}_{\bar{\delta},\bar{\xi^0},\bar{\eta}}+\Phi_{\varepsilon,\bar{t},\bar{\xi^0},\bar{\eta}}),a(x)g_\varepsilon(\mathcal{W}_{\bar{\delta},\bar{\xi^0},\bar{\eta}}+\Psi_{\varepsilon,\bar{t},\bar{\xi^0},\bar{\eta}})\big),\nonumber\\
  &\big(\partial _{s}\mathcal{W}_{\bar{\delta},\bar{\xi^0},\bar{\eta}}+\partial _{s}\Psi_{\varepsilon,\bar{t},\bar{\xi^0},\bar{\eta}},\partial _{s}\mathcal{H}_{\bar{\delta},\bar{\xi^0},\bar{\eta}}+\partial _{s}\Phi_{\varepsilon,\bar{t},\bar{\xi^0},\bar{\eta}}\big)\big\rangle \nonumber\\
  =&\sum\limits_{i=0}^N\sum\limits_{j=1}^kc_{ij}\big\langle\big(\Psi_{\delta_j,\xi^0_j,\eta_j}^i,\Phi^i_{\delta_j,\xi^0_j,\eta_j}\big),\big(\partial _{s}\mathcal{W}_{\bar{\delta},\bar{\xi^0},\bar{\eta}}+\partial _{s}\Psi_{\varepsilon,\bar{t},\bar{\xi^0},\bar{\eta}},\partial _{s}\mathcal{H}_{\bar{\delta},\bar{\xi^0},\bar{\eta}}+\partial _{s}\Phi_{\varepsilon,\bar{t},\bar{\xi^0},\bar{\eta}}\big)\big\rangle_h.
\end{align}

We prove that for any $\varepsilon>0$ small enough, there holds
\begin{equation*}
  c_{ij}=0,\quad \text{for any  $i=0,1,\cdots,N$ and $j=1,2,\cdots,k$}.
\end{equation*}
For any $l=1,2,\cdots,N$ and $m=1,2,\cdots,k$,
  we can easily check that there hold
  \begin{equation}\label{ch1}
    \big(\partial_{t_m}\mathcal{W}_{\bar{\delta},\bar{\xi^0},\bar{\eta}},\partial_{t_m}\mathcal{H}_{\bar{\delta},\bar{\xi^0},\bar{\eta}}\big)=-\frac{1}{t_m}\Big(\Psi^0_{\delta_m,\xi^0_m,\eta_m}+\sum\limits_{l=1}^N\eta_{ml}\Psi^l_{\delta_m,\xi^0_m,\eta_m},\Phi^0_{\delta_m,\xi^0_m,\eta_m}+\sum\limits_{l=1}^N\eta_{ml}\Phi^l_{\delta_m,\xi^0_m,\eta_m}\Big),
  \end{equation}
  and
  \begin{equation}\label{ch2}
    \big(\partial_{\eta_{ml}} (\mathcal{W}_{\bar{\delta},\bar{\xi^0},\bar{\eta}}),
    \partial_{\eta_{ml}} (\mathcal{H}_{\bar{\delta},\bar{\xi^0},\bar{\eta}})\big)=\big(\Psi^l_{\delta_m,\xi^0_m,\eta_m},\Phi^l_{\delta_m,\xi^0_m,\eta_m}\big).
  \end{equation}
   Using \eqref{gu1}-\eqref{gu4} and \eqref{ch1}-\eqref{ch2}, we have
\begin{align}\label{e51}
  &\sum\limits_{i=0}^N\sum\limits_{j=1}^kc_{ij}\big\langle\big(\Psi_{\delta_j,\xi^0_j,\eta_j}^i,\Phi^i_{\delta_j,\xi^0_j,\eta_j}\big),\big(\partial _{t_m}\mathcal{W}_{\bar{\delta},\bar{\xi^0},\bar{\eta}},\partial _{t_m}\mathcal{H}_{\bar{\delta},\bar{\xi^0},\bar{\eta}}\big)\big\rangle_h\nonumber\\
  =&-\frac{1}{t_m}\sum\limits_{i=0}^N\sum\limits_{j=1}^kc_{ij}\big\langle\big(\Psi_{\delta_j,\xi^0_j,\eta_j}^i,\Phi^i_{\delta_j,\xi^0_j,\eta_j}\big),\big(\Psi^0_{\delta_m,\xi^0_m,\eta_m},\Phi^0_{\delta_m,\xi^0_m,\eta_m}\big)\big\rangle_h\nonumber\\
  &-\frac{1}{t_m}\sum\limits_{i=0}^N\sum\limits_{j=1}^k\sum\limits_{l=1}^Nc_{ij}\eta_{ml}\big\langle\big(\Psi_{\delta_j,\xi^0_j,\eta_j}^i,\Phi^i_{\delta_j,\xi^0_j,\eta_j}\big),\big(\Psi^l_{\delta_m,\xi^0_m,\eta_m},\Phi^l_{\delta_m,\xi^0_m,\eta_m}\big)\big\rangle_h\nonumber\\
  =&-\frac{1}{t_m}\sum\limits_{i=0}^N\sum\limits_{j=1}^kc_{ij}\delta_{i0}\delta_{jm}\int\limits_{B(0,r_0/{\delta_m})}\big(pa_{\delta_m,\xi^0_m,\eta_m}\chi^2_{\delta_m,\eta_m} V_{1,0}^{p-1}(\Phi_{1,0}^0)^2+qa_{\delta_m,\xi^0_m,\eta_m}\chi^2_{\delta_m,\eta_m} U_{1,0}^{q-1}(\Psi_{1,0}^0)^2
  \big)dx\nonumber\\
  &-\frac{1}{t_m}\sum\limits_{i=0}^N\sum\limits_{j=1}^k\sum\limits_{l=1}^Nc_{ij}\eta_{ml}\delta_{il}\delta_{jm}\int\limits_{B(0,r_0/{\delta_m})}a_{\delta_m,\xi^0_m,\eta_m}\chi^2_{\delta_m,\eta_m}\big(p V_{1,0}^{p-1}(\Phi_{1,0}^l)^2+q U_{1,0}^{q-1}(\Psi_{1,0}^l)^2
  \big)dx
  +O(\delta_m^2),
\end{align}
\begin{align}\label{e52}
  &\sum\limits_{i=0}^N\sum\limits_{j=1}^kc_{ij}\big\langle\big(\Psi_{\delta_j,\xi^0_j,\eta_j}^i,\Phi^i_{\delta_j,\xi^0_j,\eta_j}\big),\big(\partial _{\eta_{ml}}\mathcal{W}_{\bar{\delta},\bar{\xi^0},\bar{\eta}},\partial _{\eta_{ml}}\mathcal{H}_{\bar{\delta},\bar{\xi^0},\bar{\eta}}\big)\big\rangle_h\nonumber\\
  =&\sum\limits_{i=0}^N\sum\limits_{j=1}^kc_{ij}\big\langle\big(\Psi_{\delta_j,\xi^0_j,\eta_j}^i,\Phi^i_{\delta_j,\xi^0_j,\eta_j}\big),\big(\Psi^l_{\delta_m,\xi^0_m,\eta_m},\Phi^l_{\delta_m,\xi^0_m,\eta_m}\big)\big\rangle_h\nonumber\\
  =&\sum\limits_{i=0}^N\sum\limits_{j=1}^kc_{ij}\delta_{il}\delta_{jm}\int\limits_{B(0,r_0/{\delta_m})}a_{\delta_m,\xi^0_m,\eta_m}\chi^2_{\delta_m,\eta_m}\big(p
  V_{1,0}^{p-1}(\Phi_{1,0}^l)^2+q U_{1,0}^{q-1}(\Psi_{1,0}^l)^2
  \big)dx+O(\delta_m^2),
\end{align}
and
\begin{align}\label{e53}
  &\sum\limits_{i=0}^N\sum\limits_{j=1}^kc_{ij}\big\langle\big(\Psi_{\delta_j,\xi^0_j,\eta_j}^i,\Phi^i_{\delta_j,\xi^0_j,\eta_j}\big),\big(\partial _{s}\Psi_{\varepsilon,\bar{t},\bar{\xi^0},\bar{\eta}},\partial _{s}\Phi_{\varepsilon,\bar{t},\bar{\xi^0},\bar{\eta}}\big)\big\rangle_h\nonumber\\
 =&-\sum\limits_{i=0}^N\sum\limits_{j=1}^kc_{ij}\big\langle\big(\partial_s\Psi_{\delta_j,\xi^0_j,\eta_j}^i,\partial_s\Phi^i_{\delta_j,\xi^0_j,\eta_j}\big),
  \big(\Psi_{\varepsilon,\bar{t},\bar{\xi^0},\bar{\eta}},\Phi_{\varepsilon,\bar{t},\bar{\xi^0},\bar{\eta}}\big)\big\rangle_h,
\end{align}
where $a_{\delta_m,\xi^0_m,\eta_m}(z)=a(\exp_{\xi^0_m}(\delta_m z+\delta_m \eta_m))$ and $\chi_{\delta_m,\eta_m}(z)=\chi(\delta_mz+\delta_m \eta_m)$.

For any $\vartheta\in (0,1)$, with the aid of Proposition \ref{propo1}, by the H\"{o}lder inequality, we have
\begin{align}\label{e54}
 &\sum\limits_{i=0}^N\sum\limits_{j=1}^kc_{ij}\big\langle\big(\partial _{t_m}\Psi_{\delta_j,\xi^0_j,\eta_j}^i,\partial _{t_m}\Phi^i_{\delta_j,\xi^0_j,\eta_j}\big),\big(\Psi_{\varepsilon,\bar{t},\bar{\xi^0},\bar{\eta}},\Phi_{\varepsilon,\bar{t},\bar{\xi^0},\bar{\eta}}\big)\big\rangle_h\nonumber\\
  \leq&C\sum\limits_{i=0}^N\sum\limits_{j=1}^kc_{ij}\delta_{jm}\Big(\big\|\partial_\delta\big(\delta^{-\frac{N}{q+1}}\Psi^i_{1,0}(\delta^{-1}z-\eta)\big)\big|_{\delta=1}\big\|_{\dot{W}^{1,p^*}(\mathbb{R}^N)}\|\nabla _g \Phi_{\varepsilon,\bar{t},\bar{\xi^0},\bar{\eta}}\|_{q^*}\nonumber\\
 &+\big\|\partial_\delta\big(\delta^{-\frac{N}{p+1}}\Phi^i_{1,0}(\delta^{-1}z-\eta)\big)\big|_{\delta=1}\big\|_{\dot{W}^{1,q^*}(\mathbb{R}^N)}\|\nabla _g \Psi_{\varepsilon,\bar{t},\bar{\xi^0},\bar{\eta}}\|_{p^*}\Big)+O(\varepsilon^{2}\log \varepsilon)=o\big(\varepsilon^{\vartheta}\big),
\end{align}
and
\begin{align}\label{e55}
&\sum\limits_{i=0}^N\sum\limits_{j=1}^kc_{ij}\big\langle\big(\partial _{\eta_{ml}}\Psi_{\delta_j,\xi^0_j,\eta_j}^i,\partial _{\eta_{ml}}\Phi^i_{\delta_j,\xi^0_j,\eta_j}\big),
   \big(\Psi_{\varepsilon,\bar{t},\bar{\xi^0},\bar{\eta}},\Phi_{\varepsilon,\bar{t},\bar{\xi^0},\bar{\eta}}\big)\big\rangle_h\nonumber\\
  \leq&\sum\limits_{i=0}^N\sum\limits_{j=1}^kc_{ij}\delta_{jm}\Big(\big\|\partial_{\eta_l} \Psi^i_{1,0} (z-\eta)\big\|_{\dot{W}^{1,p^*}(\mathbb{R}^N)}\|\nabla _g \Phi_{\varepsilon,\bar{t},\bar{\xi^0},\bar{\eta}}\|_{q^*}+\big\|\partial _{\eta_l}\Phi^i_{1,0}(z-\eta)\big\|_{\dot{W}^{1,q^*}(\mathbb{R}^N)}\|\nabla _g \Psi_{\varepsilon,\bar{t},\bar{\xi^0},\bar{\eta}}\|_{p^*}\Big)\nonumber\\&+O(\varepsilon^{2}\log \varepsilon)
 =o\big(\varepsilon^{\vartheta}\big).
\end{align}

Therefore, by \eqref{e51}-\eqref{e55}, we deduce that the linear system in \eqref{com} has only a trivial solution provided that $\varepsilon>0$ small enough. This ends the proof.
\end{proof}

In the next lemma, we give the asymptotic expansion of $ \mathcal{J}_{\varepsilon}(\mathcal{W}_{\bar{\delta},\bar{\xi^0},\bar{\eta}},\mathcal{H}_{\bar{\delta},\bar{\xi^0},\bar{\eta}})$ as $\varepsilon\rightarrow0$.
\begin{lemma}\label{e5.1}
Under the assumptions of Theorem \ref{th}, if  $\bar{\delta}$ is as in \eqref{solu'}, then 
there holds
\begin{align*}
  \mathcal{J}_{\varepsilon}(\mathcal{W}_{\bar{\delta},\bar{\xi^0},\bar{\eta}},\mathcal{H}_{\bar{\delta},\bar{\xi^0},\bar{\eta}})=&
  \sum\limits_{j=1}^ka(\xi^0_j)\big[\frac{2}{N}L_1+c_1\varepsilon-c_2\varepsilon \log \varepsilon+\Psi(t_j,\eta_j)\varepsilon\big]+o(\varepsilon)
\end{align*}
as $\varepsilon\rightarrow0$, $C^1$-uniformly with respect to $\bar{\eta}$ in $(\mathbb{R}^N)^k$ and to $\bar{t}$ in compact subsets of $(\mathbb{R}^+)^k$, where $L_1$ is given in \eqref{chang}, $c_1$ and $c_2$ are given in \eqref{defc1c2}, and $\Psi(t_j,\eta_j)$ is defined as \eqref{deffai}.
\end{lemma}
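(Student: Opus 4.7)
The plan is to reduce the computation to a one-bubble expansion around each $\xi^0_j$ and then Taylor-expand every ingredient (metric, $a$, $h$, the powers in $\varepsilon$) in normal coordinates. Concretely, since $d_g(\xi^0_j,\xi^0_m)>r_0$ for $j\neq m$ and the cutoff $\chi$ makes each $W_{\delta_j,\xi^0_j,\eta_j}$, $H_{\delta_j,\xi^0_j,\eta_j}$ supported in $B_g(\xi^0_j,r_0)$, every cross term in $\mathcal{J}_\varepsilon(\mathcal{W}_{\bar\delta,\bar{\xi^0},\bar\eta},\mathcal{H}_{\bar\delta,\bar{\xi^0},\bar\eta})$ between different centers vanishes. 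It therefore suffices to expand each single-bubble summand and then sum over $j=1,\dots,k$.

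For a single $j$, perform the change of variables $x=\exp_{\xi^0_j}(\delta_j(z+\eta_j))$, so that $y:=\delta_j(z+\eta_j)$ lies in a ball of radius $r_0$. Use the standard normal-coordinate expansions
\[
\sqrt{\det g(y)}=1-\tfrac{1}{6}Ric_{ij}(\xi^0_j)y^iy^j+O(|y|^3),\quad g^{ij}(y)=\delta^{ij}+\tfrac{1}{3}R^i{}_{klj}(\xi^0_j)y^ky^l+O(|y|^3),
\]
together with $a(\exp_{\xi^0_j}(y))=a(\xi^0_j)+\tfrac12 D_g^2 a(\xi^0_j)[y,y]+O(|y|^3)$ (the linear term vanishing because $\xi^0_j$ is a critical point of $a$) and $h=h(\xi^0_j)+O(|y|)$. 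Setting $\delta_j^2=\varepsilon t_j$, each $O(|y|^2)$ term becomes $\varepsilon t_j\cdot$(something in $z,\eta_j$), and the cross term $D_g^2a(\xi^0_j)[z,\eta_j]$ integrates to zero against the radial bubbles $U_{1,0},V_{1,0}$. The surviving $|z|^2$-pieces produce the scalar-curvature corrections weighted by $L_2, L_4, L_5$ (via the identity $\int z^iz^j\phi(|z|)dz=\frac{\delta^{ij}}{N}\int |z|^2\phi$), and the surviving $D_g^2a(\xi^0_j)[\eta_j,\eta_j]$-piece is weighted by $L_1=\int V_{1,0}^{p+1}=\int U_{1,0}^{q+1}$. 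The $h$-integral contributes $a(\xi^0_j)h(\xi^0_j)\delta_j^2 L_3+o(\varepsilon)$ since rescaling gives a prefactor $\delta_j^{N-N/(p+1)-N/(q+1)}=\delta_j^2$.

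For the nonlinear terms $-\frac{1}{p+1-\alpha\varepsilon}\int_\mathcal{M}a\,v^{p+1-\alpha\varepsilon}dv_g$, the change of variable produces a Jacobian factor $\delta_j^{N\alpha\varepsilon/(p+1)}=1+\frac{N\alpha\varepsilon}{2(p+1)}(\log\varepsilon+\log t_j)+O(\varepsilon^2\log^2\varepsilon)$; expanding further $V_{1,0}^{p+1-\alpha\varepsilon}=V_{1,0}^{p+1}-\alpha\varepsilon V_{1,0}^{p+1}\log V_{1,0}+O(\varepsilon^2)$ (which produces $L_6$) and $(p+1-\alpha\varepsilon)^{-1}=(p+1)^{-1}+\alpha\varepsilon/(p+1)^2+O(\varepsilon^2)$, the same Taylor expansions of $a,\sqrt{\det g}$ contribute curvature and Hessian corrections weighted by $L_4$ (and, analogously, $L_5, L_7$ for the $q$-term). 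Assembling the kinetic, $h$, and two nonlinear contributions, the critical-hyperbola identity $1-\frac{1}{p+1}-\frac{1}{q+1}=\frac{2}{N}$ yields the leading term $\frac{2}{N}L_1 a(\xi^0_j)$; the $\log\varepsilon$ terms consolidate into $-c_2 a(\xi^0_j)\varepsilon\log\varepsilon$; the $(t_j,\eta_j)$-independent $\varepsilon$-pieces (combining $L_6,L_7$ with the $L_1/(p+1)^2, L_1/(q+1)^2$ corrections) give $c_1 a(\xi^0_j)\varepsilon$; and the remaining $\varepsilon t_j$ and $-c_2 \varepsilon\log t_j$ pieces combine, after factoring $a(\xi^0_j)$, into exactly $a(\xi^0_j)\Psi(t_j,\eta_j)\varepsilon$ as in \eqref{deffai}.

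The main obstacle is the bookkeeping in the last step: one has to verify the precise cancellation pattern between the kinetic $L_2$-contribution and the nonlinear $L_4/(p+1), L_5/(q+1)$ contributions so that the combination $L_2-\frac{L_4}{p+1}-\frac{L_5}{q+1}$ multiplies both $Scal_g(\xi^0_j)$ and $\Delta_g a(\xi^0_j)/a(\xi^0_j)$ with the correct signs, while the $\eta_j$-shift feeds through $a$ alone to give the $L_1 D_g^2a(\xi^0_j)[\eta_j,\eta_j]/(Na(\xi^0_j))$ term. The convergence of $L_2,L_4,L_5,L_6,L_7$ is exactly what is guaranteed by hypothesis \eqref{condi} together with the decay Lemmas \ref{jian1}--\ref{jian2}. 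Finally, $C^1$-uniformity in $(\bar t,\bar\eta)$ is obtained by differentiating each expansion term-by-term in $t_j$ and $\eta_j$ inside the integrals, the exchange being justified by dominated convergence using the same decay bounds; uniformity on compact sets in $\bar t\in(\mathbb{R}^+)^k$ and on all of $(\mathbb{R}^N)^k$ in $\bar\eta$ follows because the scaling $\delta_j=\sqrt{\varepsilon t_j}$ stays uniformly bounded and away from zero there.
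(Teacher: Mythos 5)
Your proposal follows essentially the same route as the paper: decompose into single bubbles using the separation $d_g(\xi^0_j,\xi^0_m)>r_0$, rescale, Taylor-expand the metric, $a$, $h$ and the exponents in normal coordinates (the paper writes the metric expansion via generic second derivatives and then invokes the identity \eqref{ge1} for $Scal_g$, which is equivalent to your curvature-tensor form), and collect the $L_i$-weighted terms; the $C^1$-part, which you only sketch via differentiation under the integral, is carried out explicitly in the paper by computing $\partial_{t_j}$ and $\partial_{\eta_{js}}$ of each of the four integrals, but the underlying idea is the same.
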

\begin{proof}
Since $d_g(\xi^0_j,\xi^0_m)>r_0$ for any $j\neq m$, by the definition of  the function $\chi(z)$, there holds
\begin{align*}
  &\mathcal{J}_{\varepsilon}(\mathcal{W}_{\bar{\delta},\bar{\xi^0},\bar{\eta}},\mathcal{H}_{\bar{\delta},\bar{\xi^0},\bar{\eta}})=\sum \limits_{j=1}^k\mathcal{J}_{\varepsilon}(W_{\delta_j,\xi^0_j,\eta_j},H_{\delta_j,\xi^0_j,\eta_j})\\
  =&\sum \limits_{j=1}^k
  a(\xi^0_j)\Big\{\int\limits_{\mathcal{M}}\nabla _g W_{\delta_j,\xi^0_j,\eta_j}\cdot \nabla _g H_{\delta_j,\xi^0_j,\eta_j}d v_g+\int\limits_{\mathcal{M}}hW_{\delta_j,\xi^0_j,\eta_j}H_{\delta_j,\xi^0_j,\eta_j}d v_g\\
  &-\frac{1}{p+1-\alpha\varepsilon}\int\limits_{\mathcal{M}}
  H_{\delta_j,\xi^0_j,\eta_j}^{p+1-\alpha\varepsilon}d v_g-\frac{1}{q+1-\beta\varepsilon}\int\limits_{\mathcal{M}}  W_{\delta_j,\xi^0_j,\eta_j}^{q+1-\beta\varepsilon}d v_g\Big\}\\
  &+\sum \limits_{j=1}^k
  \Big\{\int\limits_{\mathcal{M}}\big(a(x)-a(\xi^0_j)\big)\nabla _g W_{\delta_j,\xi^0_j,\eta_j}\cdot \nabla _g H_{\delta_j,\xi^0_j,\eta_j}d v_g+\int\limits_{\mathcal{M}}\big(a(x)-a(\xi^0_j)\big)hW_{\delta_j,\xi^0_j,\eta_j}H_{\delta_j,\xi^0_j,\eta_j}d v_g\\
  &-\frac{1}{p+1-\alpha\varepsilon}\int\limits_{\mathcal{M}}
 \big(a(x)-a(\xi^0_j)\big) H_{\delta_j,\xi^0_j,\eta_j}^{p+1-\alpha\varepsilon}d v_g-\frac{1}{q+1-\beta\varepsilon}\int\limits_{\mathcal{M}}  \big(a(x)-a(\xi^0_j)\big)W_{\delta_j,\xi^0_j,\eta_j}^{q+1-\beta\varepsilon}d v_g\Big\}\\
 =&:\sum \limits_{j=1}^k
  a(\xi^0_j)(I_1+I_2-I_3-I_4)+\sum \limits_{j=1}^k(I_5+I_6-I_7-I_8).
\end{align*}
First of all, we estimate $I_1$, $I_2$, $I_3$ and $I_4$:
\begin{align*}
  I_1=&\int\limits_{\mathbb{R}^N}\sum\limits_{a,b=1}^Ng^{ab}_{\xi^0_j}(\delta_j z+\delta_j \eta_j)\partial_{z_a}\big(\chi_{\delta_j,\eta_j}U_{1,0}(z)\big)\partial_{z_b}
  \big(\chi_{\delta_j,\eta_j}V_{1,0}(z)\big)\big|g_{\xi^0_j}(\delta_j z+\delta_j \eta_j)\big|^{1/2}dz\\
  =&\int\limits_{\mathbb{R}^N}\sum\limits_{a,b=1}^Ng^{ab}_{\xi^0_j}(\delta_j z+\delta_j \eta_j)\partial_{z_a}U_{1,0}(z)\partial_{z_b}V_{1,0}(z)\big|g_{\xi^0_j}(\delta_j z+\delta_j \eta_j)\big|^{1/2}dz+o(\delta_j^2)\\
  =&\int\limits_{\mathbb{R}^N}\sum\limits_{a,b=1}^N\Big(\delta_{ab}+\frac{\delta_j^2}{2}\sum\limits_{s,t=1}^N\frac{\partial^2g_{\xi^0_j}^{ab}}{\partial{y_s}\partial{y_t}}(0)(z_s+\eta_{js})(z_t+\eta_{jt})\Big)
  \partial_{z_a}U_{1,0}(z)\partial_{z_b}V_{1,0}(z)\\&\times\Big(1-\frac{\delta_j^2}{4}\sum\limits_{s,t,r=1}^N\frac{\partial^2g_{\xi^0_j}^{rr}}{\partial{y_s}\partial{y_t}}(0)(z_s+\eta_{js})(z_t+\eta_{jt})\Big)dz+o(\delta_j^2)
\\=&\int\limits_{\mathbb{R}^N}\nabla U_{1,0}\cdot \nabla V_{1,0}dz+
  \frac{\delta_j^2}{2}\sum\limits_{a,b,s,t=1}^N\frac{\partial^2g_{\xi^0_j}^{ab}}{\partial{y_s}\partial{y_t}}(0)\int\limits_{\mathbb{R}^N}z_sz_t\partial_{z_a}U_{1,0}(z)\partial_{z_b}V_{1,0}(z)dz\\
  &+\frac{\delta_j^2}{2}\sum\limits_{a,b,s,t=1}^N\frac{\partial^2g_{\xi^0_j}^{ab}}{\partial{y_s}\partial{y_t}}(0)\eta_{js}\eta_{jt}\int\limits_{\mathbb{R}^N}\partial_{z_a}U_{1,0}(z)\partial_{z_b}V_{1,0}(z)dz
  \\&-\frac{\delta_j^2}{4}\sum\limits_{s,r=1}^N\frac{\partial^2g_{\xi^0_j}^{rr}}{\partial{y_s}^2}(0)\int\limits_{\mathbb{R}^N}z_s^2\nabla U_{1,0}\cdot \nabla V_{1,0}dz-\frac{\delta_j^2}{4}\sum\limits_{s,t,r=1}^N\frac{\partial^2g_{\xi^0_j}^{rr}}{\partial{y_s}\partial{y_t}}(0)\eta_{js} \eta_{jt}\int\limits_{\mathbb{R}^N}\nabla U_{1,0}\cdot \nabla V_{1,0}dz+o(\delta_j^2)\\
  =&\int\limits_{\mathbb{R}^N}\nabla U_{1,0}\cdot \nabla V_{1,0}dz+
  \frac{\varepsilon t_j}{2}\sum\limits_{a,b,s,t=1}^N\frac{\partial^2g_{\xi^0_j}^{ab}}{\partial{y_s}\partial{y_t}}(0)\int\limits_{\mathbb{R}^N}\frac{U'_{1,0}(z)V'_{1,0}(z)}{|z|^2}z_az_bz_sz_tdz\\
  &+\frac{\varepsilon t_j}{2}\sum\limits_{a,s,t=1}^N\frac{\partial^2g_{\xi^0_j}^{aa}}{\partial{y_s}\partial{y_t}}(0)\eta_{js}\eta_{jt}\int\limits_{\mathbb{R}^N}\frac{U'_{1,0}(z)V'_{1,0}(z)}{|z|^2}z_a^2dz
  -\frac{\varepsilon t_j}{4}\sum\limits_{s,r=1}^N\frac{\partial^2g_{\xi^0_j}^{rr}}{\partial{y_s}^2}(0)\int\limits_{\mathbb{R}^N}U'_{1,0}(z)V'_{1,0}(z)z_s^2dz\\&-\frac{\varepsilon t_j}{4}\sum\limits_{s,t,r=1}^N\frac{\partial^2g_{\xi^0_j}^{rr}}{\partial{y_s}\partial{y_t}}(0)\eta_{js} \eta_{jt}\int\limits_{\mathbb{R}^N}U'_{1,0}(z) V'_{1,0}(z)dz+o(\varepsilon ),
\end{align*}
and
\begin{align*}
  I_2=&\delta_j^2\int\limits_{\mathbb{R}^N}h_{\delta_j,\xi^0_j,\eta_j}\chi^2_{\delta_j,\eta_j}U_{1,0}(z)V_{1,0}(z)\big|g_{\xi^0_j}(\delta_j z+\delta_j \eta_j)\big|^{1/2}dz\\
  =&\delta_j^2\int\limits_{\mathbb{R}^N}\big(h(\xi^0_j)+O(\delta_j)\big)U_{1,0}(z)V_{1,0}(z)\big(1+O(\delta_j^2)\big)dz+o(\delta_j^2)\\
  =&\varepsilon t_jh(\xi^0_j)\int\limits_{\mathbb{R}^N}U_{1,0}(z)V_{1,0}(z)dz+o(\varepsilon),
\end{align*}
where $\chi_{\delta_j,\eta_j}(z)=\chi(\delta_j z+ \delta_j\eta_j)$ and $h_{\delta_j,\xi^0_j,\eta_j}(z)=h(\exp_{\xi^0_j}(\delta_j z+\delta_j \eta_j))$.
Using the Taylor formula, we have
\begin{align*}
  I_3=&\frac{1}{p+1}\int\limits_{\mathcal{M}}
  H_{\delta_j,\xi^0_j,\eta_j}^{p+1}d v_g+\alpha\varepsilon \int\limits_{\mathcal{M}}
  \Big[\frac{H_{\delta_j,\xi^0_j,\eta_j}^{p+1}}{(p+1)^2}-\frac{H_{\delta_j,\xi^0_j,\eta_j}^{p+1}\log H_{\delta_j,\xi^0_j,\eta_j}}{p+1}\Big]d v_g+o(\delta^2_j)\\
  =&\Big(\frac{1}{p+1}+\frac{\alpha\varepsilon}{(p+1)^2}\Big)\int\limits_{\mathbb{R}^N}V_{1,0}^{p+1}\Big(1-\frac{\delta_j^2}{4}\sum\limits_{s,t,r=1}^N\frac{\partial^2g_{\xi^0_j}^{rr}}{\partial{y_s}\partial{y_t}}(0)(z_s+\eta_{js})(z_t+\eta_{jt})\Big)dz
  \\&-\frac{\alpha\varepsilon}{p+1}\int\limits_{\mathbb{R}^N}V_{1,0}^{p+1}\log \big(\delta_j^{-\frac{N}{p+1}}V_{1,0}\big)\Big(1-\frac{\delta_j^2}{4}\sum\limits_{s,t,r=1}^N\frac{\partial^2g_{\xi^0_j}^{rr}}{\partial{y_s}\partial{y_t}}(0)(z_s+\eta_{js})(z_t+\eta_{jt})\Big)dz
 +o(\delta_j^2)\\
 =&\frac{1}{p+1}\int\limits_{\mathbb{R}^N}V_{1,0}^{p+1}dz+\frac{\alpha \varepsilon}{p+1}\Big(\frac{1}{p+1}\int\limits_{\mathbb{R}^N}V_{1,0}^{p+1}dz-\int\limits_{\mathbb{R}^N}V_{1,0}^{p+1}\log V_{1,0}dz\Big)\\
 &+\frac{N\alpha \varepsilon}{2(p+1)^2}\log (\varepsilon t_j)\int_{\mathbb{R}^N}V_{1,0}^{p+1}dz-\frac{\varepsilon t_j}{4(p+1)}\sum\limits_{s,r=1}^N\frac{\partial^2g_{\xi^0_j}^{rr}}{\partial{y_s}^2}(0)\int\limits_{\mathbb{R}^N}V_{1,0}^{p+1}z_s^2dz\\
 &-\frac{\varepsilon t_j}{4(p+1)}\sum\limits_{s,t,r=1}^N\frac{\partial^2g_{\xi^0_j}^{rr}}{\partial{y_s}\partial{y_t}}(0)\eta_{js}\eta_{jt}\int\limits_{\mathbb{R}^N}V_{1,0}^{p+1}dz+o(\varepsilon),
\end{align*}
and
\begin{align*}
  I_4=&\frac{1}{q+1}\int\limits_{\mathcal{M}}
  W_{\delta_j,\xi^0_j,\eta_j}^{q+1}d v_g+\beta\varepsilon \int\limits_{\mathcal{M}}
  \Big[\frac{W_{\delta_j,\xi^0_j,\eta_j}^{q+1}}{(q+1)^2}-\frac{W_{\delta_j,\xi^0_j,\eta_j}^{q+1}\log W_{\delta_j,\xi^0_j,\eta_j}}{q+1}\Big]d v_g+o(\delta^2_j)\\
  =&\Big(\frac{1}{q+1}+\frac{\beta\varepsilon}{(q+1)^2}\Big)\int\limits_{\mathbb{R}^N}U_{1,0}^{q+1}\Big(1-\frac{\delta_j^2}{4}\sum\limits_{s,t,r=1}^N\frac{\partial^2g_{\xi^0_j}^{rr}}{\partial{y_s}\partial{y_t}}(0)(z_s+\eta_{js})(z_t+\eta_{jt})\Big)dz
  \\&-\frac{\beta\varepsilon}{q+1}\int\limits_{\mathbb{R}^N}U_{1,0}^{q+1}\log \big(\delta_j^{-\frac{N}{q+1}}U_{1,0}\big)\Big(1-\frac{\delta_j^2}{4}\sum\limits_{s,t,r=1}^N\frac{\partial^2g_{\xi^0_j}^{rr}}{\partial{y_s}\partial{y_t}}(0)(z_s+\eta_{js})(z_t+\eta_{jt})\Big)dz
 +o(\delta_j^2)\\
 =&\frac{1}{q+1}\int\limits_{\mathbb{R}^N}U_{1,0}^{q+1}dz+\frac{\beta \varepsilon}{q+1}\Big(\frac{1}{q+1}\int\limits_{\mathbb{R}^N}U_{1,0}^{q+1}dz-\int\limits_{\mathbb{R}^N}U_{1,0}^{q+1}\log U_{1,0}dz\Big)\\
 &+\frac{N\beta \varepsilon}{2(q+1)^2}\log (\varepsilon t_j)\int_{\mathbb{R}^N}U_{1,0}^{q+1}dz-\frac{\varepsilon t_j}{4(q+1)}\sum\limits_{s,r=1}^N\frac{\partial^2g_{\xi^0_j}^{rr}}{\partial{y_s}^2}(0)\int\limits_{\mathbb{R}^N}U_{1,0}^{q+1}z_s^2dz\\
 &-\frac{\varepsilon t_j}{4(q+1)}\sum\limits_{s,t,r=1}^N\frac{\partial^2g_{\xi^0_j}^{rr}}{\partial{y_s}\partial{y_t}}(0)\eta_{js}\eta_{jt}\int\limits_{\mathbb{R}^N}U_{1,0}^{q+1}dz+o(\varepsilon).
\end{align*}
Now, we estimate $I_5,I_6,I_7$ and $I_8$. Let $\tilde{a}(z)=a(\exp_{\xi^0_j}(z))$, since $\xi^0_j$
is a non-degenerate critical point of $a(x)$, then
\begin{align*}
  I_5=&\int\limits_{\mathbb{R}^N}\big(\tilde{a}(\delta_j z+\delta_j \eta_j)-\tilde{a}(0)\big)\sum\limits_{a,b=1}^Ng^{ab}_{\xi^0_j}(\delta_j z+\delta_j \eta_j)\partial_{z_a}\big(\chi_{\delta_j,\eta_j}U_{1,0}(z)\big)\partial_{z_b}
  \big(\chi_{\delta_j,\eta_j}V_{1,0}(z)\big)\big|g_{\xi^0_j}(\delta_j z+\delta_j \eta_j)\big|^{1/2}dz\\
  =&\frac{\delta_j^2}{2}\int\limits_{\mathbb{R}^N}\sum\limits_{s,t=1}^N\frac{\partial^2\tilde{a}}{\partial y_s \partial y_t}(0)(z_s+\eta_{js})(z_t+\eta_{jt})\nabla U_{1,0}\cdot \nabla V_{1,0}dz+o(\delta_j^2)\\
  =&\frac{\varepsilon t_j}{2}\sum\limits_{s=1}^N\frac{\partial^2\tilde{a}}{\partial y_s^2 }(0)\int\limits_{\mathbb{R}^N}z_s^2\nabla U_{1,0}\cdot \nabla V_{1,0}dz+\frac{\varepsilon t_j}{2}\sum\limits_{s,t=1}^N\frac{\partial^2\tilde{a}}{\partial y_s \partial y_t}(0)\eta_{js}\eta_{jt}\int\limits_{\mathbb{R}^N}\nabla U_{1,0}\cdot \nabla V_{1,0}dz+o(\varepsilon),
\end{align*}
\begin{align*}
  I_6=&\delta_j^2\int\limits_{\mathbb{R}^N}\big(\tilde{a}(\delta_j z+\delta_j \eta_j)-\tilde{a}(0)\big)h_{\delta_j,\xi^0_j,\eta_j}\chi^2_{\delta_j,\eta_j}U_{1,0}(z)V_{1,0}(z)\big|g_{\xi^0_j}(\delta_j z+\delta_j \eta_j)\big|^{1/2}dz\\
  =&\frac{\delta_j^4}{2}\int\limits_{\mathbb{R}^N}\sum\limits_{s,t=1}^N\frac{\partial^2\tilde{a}}{\partial y_s \partial y_t}(0)(z_s+\eta_{js})(z_t+\eta_{jt})h_{\delta_j,\xi^0_j,\eta_j}\chi^2_{\delta_j,\eta_j}U_{1,0}(z)V_{1,0}(z)dz+o(\delta_j^4)=O(\varepsilon^2)=o(\varepsilon),
\end{align*}
and
\begin{align*}
  I_7=&\frac{1}{p+1}\int\limits_{\mathcal{M}}
  \big(\tilde{a}(\delta_j z+\delta_j \eta_j)-\tilde{a}(0)\big)H_{\delta_j,\xi^0_j,\eta_j}^{p+1}d v_g\\
  &+\alpha\varepsilon \int\limits_{\mathcal{M}}
  \big(\tilde{a}(\delta_j z+\delta_j \eta_j)-\tilde{a}(0)\big)\Big[\frac{H_{\delta_j,\xi^0_j,\eta_j}^{p+1}}{(p+1)^2}-\frac{H_{\delta_j,\xi^0_j,\eta_j}^{p+1}\log H_{\delta_j,\xi^0_j,\eta_j}}{p+1}\Big]d v_g+o(\delta^2_j)\\
  =&\frac{\varepsilon t_j}{2(p+1)}\sum\limits_{s=1}^N\frac{\partial^2\tilde{a}}{\partial y_s^2 }(0)\int\limits_{\mathbb{R}^N}V_{1,0}^{p+1}z_s^2dz+\frac{\varepsilon t_j}{2(p+1)}\sum\limits_{s,t=1}^N\frac{\partial^2\tilde{a}}{\partial y_s \partial y_t}(0)\eta_{js}\eta_{jt}\int\limits_{\mathbb{R}^N}V_{1,0}^{p+1}dz+o(\varepsilon),
  \end{align*}
\begin{align*}
  I_8=&\frac{1}{q+1}\int\limits_{\mathcal{M}}
  \big(\tilde{a}(\delta_j z+\delta_j \eta_j)-\tilde{a}(0)\big)W_{\delta_j,\xi^0_j,\eta_j}^{q+1}d v_g\\
  &+\beta\varepsilon \int\limits_{\mathcal{M}}
  \big(\tilde{a}(\delta_j z+\delta_j \eta_j)-\tilde{a}(0)\big)\Big[\frac{W_{\delta_j,\xi^0_j,\eta_j}^{q+1}}{(q+1)^2}-\frac{W_{\delta_j,\xi^0_j,\eta_j}^{q+1}\log W_{\delta_j,\xi^0_j,\eta_j}}{q+1}\Big]d v_g+o(\delta^2_j)\\
  =&\frac{\varepsilon t_j}{2(q+1)}\sum\limits_{s=1}^N\frac{\partial^2\tilde{a}}{\partial y_s^2 }(0)\int\limits_{\mathbb{R}^N}U_{1,0}^{q+1}z_s^2dz+\frac{\varepsilon t_j}{2(q+1)}\sum\limits_{s,t=1}^N\frac{\partial^2\tilde{a}}{\partial y_s \partial y_t}(0)\eta_{js}\eta_{jt}\int\limits_{\mathbb{R}^N}U_{1,0}^{q+1}dz+o(\varepsilon).
  \end{align*}

Therefore, taking into account that
\begin{equation}\label{ge1}
  \sum\limits_{a,b=1}^N\frac{\partial g_{\xi^0_j}^{aa}}{\partial y_b^2}(0)-\sum\limits_{a,b=1}^N\frac{\partial g_{\xi^0_j}^{ab}}{\partial y_a \partial y_b}(0)=Scal_g(\xi^0_j),
\end{equation}
and
\begin{equation}\label{ge2}
  \Delta_g a(\xi^0_j)=\sum\limits_{s=1}^N\frac{\partial^2 \tilde{a}}{\partial y_s^2}(0),\quad D^2_ga(\xi^0_j)[\eta_j,\eta_j]=\sum\limits_{s,t=1}^N \frac{\partial^2 \tilde{a}}{\partial y_s \partial y_t}(0)\eta_{js}\eta_{jt},
\end{equation}
we get the $C^0$-estimate.

Next, we prove the $C^1$-estimate. For any $j=1,2,\cdots,k$, define
 \begin{equation*}
   A(\delta_j,\xi^0_j,\eta_j)=\int\limits_{\mathcal{M}}a(x)\nabla _g W_{\delta_j,\xi^0_j,\eta_j}\cdot \nabla _g H_{\delta_j,\xi^0_j,\eta_j}d v_g,\quad B(\delta_j,\xi^0_j,\eta_j)=\int\limits_{\mathcal{M}}a(x)hW_{\delta_j,\xi^0_j,\eta_j}H_{\delta_j,\xi^0_j,\eta_j}d v_g,
 \end{equation*}
 and
 \begin{equation*}
   C(\delta_j,\xi^0_j,\eta_j)=\frac{1}{p+1-\alpha\varepsilon}\int\limits_{\mathcal{M}}
  a(x)H_{\delta_j,\xi^0_j,\eta_j}^{p+1-\alpha\varepsilon}d v_g,\quad D(\delta_j,\xi^0_j,\eta_j)=\frac{1}{q+1-\beta\varepsilon}\int\limits_{\mathcal{M}}  a(x)W_{\delta_j,\xi^0_j,\eta_j}^{q+1-\beta\varepsilon}d v_g.
 \end{equation*}
Let $\delta'_j=\partial_{t_j}\delta_j$, then $\delta'_j\delta_j=\frac{\varepsilon}{2}$. Set $\tilde{h}(z)=h(\exp_{\xi^0_j}(z))$, since $\xi^0_j$
is a non-degenerate critical point of $a(x)$, we have
\begin{align*}
  \partial _{t_j}A(\delta_j,\xi^0_j,\eta_j)=&\delta'_j\partial_{\delta_j}A(\delta_j,\xi^0_j,\eta_j)\\
  =&\delta'_j\frac{\partial}{\partial{\delta_j}}\int\limits_{\mathbb{R}^N}\tilde{a}(\delta_jz+\delta_j\eta_j)\sum\limits_{a,b=1}^Ng^{ab}_{\xi^0_j}(\delta_j z+\delta_j \eta_j)\partial_{z_a}\big(\chi_{\delta_j,\eta_j}U_{1,0}(z)\big)\partial_{z_b}
  \big(\chi_{\delta_j,\eta_j}V_{1,0}(z)\big)\\
  &\times \big|g_{\xi^0_j}(\delta_j z+\delta_j \eta_j)\big|^{1/2}dz\\
  =&\delta'_j\int\limits_{\mathbb{R}^N}\sum\limits_{s=1}^N\frac{\partial\tilde{a}}{\partial y_s}(\delta_jz+\delta_j\eta_j)(z_s+\eta_{js})\sum\limits_{a,b=1}^Ng^{ab}_{\xi^0_j}(\delta_j z+\delta_j \eta_j)\partial_{z_a}U_{1,0}(z)\partial_{z_b}
  V_{1,0}(z)dz\\
  &+\delta'_j\int\limits_{\mathbb{R}^N}\tilde{a}(\delta_jz+\delta_j\eta_j)\sum\limits_{a,b,s=1}^N\frac{\partial g^{ab}_{\xi^0_j}}{\partial y_s}(\delta_j z+\delta_j \eta_j)(z_s+\eta_{js})\partial_{z_a}U_{1,0}(z)\partial_{z_b}
  V_{1,0}(z)dz\\
  &-\delta'_j\int\limits_{\mathbb{R}^N}\tilde{a}(\delta_jz+\delta_j\eta_j)\sum\limits_{a,b=1}^Ng^{ab}_{\xi^0_j}(\delta_j z+\delta_j \eta_j)\partial_{z_a}U_{1,0}(z)\partial_{z_b}
  V_{1,0}(z)\\
  &\times \frac{\delta_j}{2}\sum\limits_{s,t,r=1}^N\frac{\partial^2g_{\xi^0_j}^{rr}}{\partial{y_s}\partial{y_t}}(0)(z_s+\eta_{js})(z_t+\eta_{jt})dz+o(\varepsilon)\\
  =&\delta'_j\delta_j\int\limits_{\mathbb{R}^N}\sum\limits_{s,t=1}^N\frac{\partial^2\tilde{a}}{\partial y_s \partial y_t}(0)(z_s+\eta_{js})(z_t+\eta_{jt})\nabla U_{1,0}\cdot \nabla V_{1,0}dz\\
  &+\delta'_j\delta_j\int\limits_{\mathbb{R}^N}\tilde{a}(0)\sum\limits_{a,b,s,t=1}^N\frac{\partial ^2g^{ab}_{\xi^0_j}}{\partial y_s\partial y_t}(0)(z_s+\eta_{js})( z_t+\eta_{jt})\partial_{z_a}U_{1,0}(z)\partial_{z_b}
  V_{1,0}(z)dz\\
  &-\frac{\delta'_j \delta_j}{2}\int\limits_{\mathbb{R}^N}\tilde{a}(0)\nabla U_{1,0}\cdot \nabla
  V_{1,0}\sum\limits_{s,t,r=1}^N\frac{\partial^2g_{\xi^0_j}^{rr}}{\partial{y_s}\partial{y_t}}(0)(z_s+\eta_{js})(z_t+\eta_{jt})dz+o(\varepsilon)\\
  =&\frac{\varepsilon}{2}\sum\limits_{s=1}^N\frac{\partial^2\tilde{a}}{\partial y_s ^2}(0)\int\limits_{\mathbb{R}^N}z_s^2\nabla U_{1,0}\cdot \nabla V_{1,0}dz+\frac{\varepsilon}{2}\sum\limits_{s,t=1}^N\frac{\partial^2\tilde{a}}{\partial y_s \partial y_t}(0)\eta_{js}\eta_{jt}\int\limits_{\mathbb{R}^N}\nabla U_{1,0}\cdot \nabla V_{1,0}dz\\
  &+\frac{\varepsilon}{2}\tilde{a}(0)\sum\limits_{a,b,s,t=1}^N\frac{\partial^2 g^{ab}_{\xi^0_j}}{\partial y_s\partial y_t}(0)\int\limits_{\mathbb{R}^N}
  \frac{U'_{1,0}(z)V'_{1,0}(z)}{|z|^2}z_az_bz_sz_tdz\\
  &+\frac{\varepsilon}{2}\tilde{a}(0)\sum\limits_{a,s,t=1}^N\frac{\partial^2 g^{aa}_{\xi^0_j}}{\partial y_s\partial y_t}(0)\eta_{js}\eta_{jt}\int\limits_{\mathbb{R}^N}
  \frac{U'_{1,0}(z)V'_{1,0}(z)}{|z|^2}z_a^2dz\\
  &-\frac{\varepsilon}{4}\tilde{a}(0)\sum\limits_{s,r=1}^N\frac{\partial^2g_{\xi^0_j}^{rr}}{\partial{y_s}^2}(0)\int\limits_{\mathbb{R}^N}z_s^2\nabla U_{1,0}\cdot \nabla
  V_{1,0}dz\\
  &-\frac{\varepsilon}{4}\tilde{a}(0)\sum\limits_{s,t,r=1}^N\frac{\partial^2g_{\xi^0_j}^{rr}}{\partial{y_s}\partial{y_t}}(0)\eta_{js}\eta_{jt}\int\limits_{\mathbb{R}^N}\nabla U_{1,0}\cdot \nabla
  V_{1,0}dz+o(\varepsilon),
\end{align*}
\begin{align*}
  \partial _{t_j}B(\delta_j,\xi^0_j,\eta_j)=&\delta'_j\partial_{\delta_j}B(\delta_j,\xi^0_j,\eta_j)\\
  =&\delta'_j\frac{\partial}{\partial{\delta_j}}\int\limits_{\mathbb{R}^N}\delta_j^2\tilde{a}(\delta_jz+\delta_j\eta_j)\tilde{h}(\delta_jz+\delta_j\eta_j)\chi_{\delta_j,\eta_j}^2U_{1,0}(z)V_{1,0}(z)\big|g_{\xi^0_j}(\delta_j z+\delta_j \eta_j)\big|^{1/2}dz\\
  =&2\delta'_j\delta_j\int\limits_{\mathbb{R}^N}\tilde{a}(0)\tilde{h}(0)U_{1,0}(z)V_{1,0}(z)dz+o(\varepsilon)
  =\varepsilon\tilde{a}(0)\tilde{h}(0)\int\limits_{\mathbb{R}^N}U_{1,0}(z)V_{1,0}(z)dz+o(\varepsilon),
\end{align*}
and
\begin{align*}
&\partial _{t_j}C(\delta_j,\xi^0_j,\eta_j)\\=&\delta'_j\partial_{\delta_j}C(\delta_j,\xi^0_j,\eta_j)\\
  =&\delta'_j\Big(\frac{1}{p+1}+\frac{\alpha\varepsilon}{(p+1)^2}\Big)\frac{\partial}{\partial{\delta_j}}\int\limits_{\mathbb{R}^N}\tilde{a}(\delta_jz+\delta_j\eta_j)V_{1,0}^{p+1}\Big(1-\frac{\delta_j^2}{4}\sum\limits_{s,t,r=1}^N\frac{\partial^2g_{\xi^0_j}^{rr}}{\partial{y_s}\partial{y_t}}(0)(z_s+\eta_{js})(z_t+\eta_{jt})\Big)dz\\
  &-\frac{\alpha \delta'_j\varepsilon}{p+1}\frac{\partial}{\partial{\delta_j}}\int\limits_{\mathbb{R}^N}\tilde{a}(\delta_jz+\delta_j\eta_j)V_{1,0}^{p+1}\log \big(\delta_j^{-\frac{N}{p+1}}V_{1,0}\big)\Big(1-\frac{\delta_j^2}{4}\sum\limits_{s,t,r=1}^N\frac{\partial^2g_{\xi^0_j}^{rr}}{\partial{y_s}\partial{y_t}}(0)(z_s+\eta_{js})(z_t+\eta_{jt})\Big)dz+o(\varepsilon)\\
  =&\frac{\delta'_j}{p+1}\int\limits_{\mathbb{R}^N}\sum\limits_{s=1}^N\frac{\partial\tilde{a}}{\partial y_s}(\delta_jz+\delta_j\eta_j)(z_s+\eta_{js})V_{1,0}^{p+1}dz+
  \frac{N\alpha \delta'_j\varepsilon}{(p+1)^2\delta_j}\int\limits_{\mathbb{R}^N}\tilde{a}(\delta_jz+\delta_j\eta_j)V_{1,0}^{p+1}dz
  \\&-\frac{\delta_j'\delta_j}{2(p+1)}\int\limits_{\mathbb{R}^N}\tilde{a}(\delta_jz+\delta_j\eta_j)V_{1,0}^{p+1}\sum\limits_{s,t,r=1}^N\frac{\partial^2g_{\xi^0_j}^{rr}}{\partial{y_s}\partial{y_t}}(0)(z_s+\eta_{js})(z_t+\eta_{jt})dz
+o(\varepsilon)\\
=&\frac{\varepsilon}{2(p+1)}\sum\limits_{s=1}^N\frac{\partial\tilde{a}}{\partial y_s^2}(0)\int\limits_{\mathbb{R}^N}z_s^2V_{1,0}^{p+1}dz+\frac{\varepsilon}{2(p+1)}\sum\limits_{s,t=1}^N\frac{\partial\tilde{a}}{\partial y_s\partial y_t}(0)\eta_{js}\eta_{jt}\int\limits_{\mathbb{R}^N}V_{1,0}^{p+1}dz+\frac{N\alpha \tilde{a}(0)\varepsilon}{2(p+1)^2t_j}\int\limits_{\mathbb{R}^N}V_{1,0}^{p+1}dz\\&-\frac{\tilde{a}(0)\varepsilon}{4(p+1)}\sum\limits_{s,r=1}^N\frac{\partial^2g_{\xi^0_j}^{rr}}{\partial{y_s}^2}(0)\int\limits_{\mathbb{R}^N}V_{1,0}^{p+1}z_s^2dz-\frac{\tilde{a}(0)\varepsilon}{4(p+1)}\sum\limits_{s,t,r=1}^N\frac{\partial^2g_{\xi^0_j}^{rr}}{\partial{y_s}\partial{y_t}}(0)\eta_{js}\eta_{jt}\int\limits_{\mathbb{R}^N}V_{1,0}^{p+1}dz
+o(\varepsilon),
\end{align*}
\begin{align*}
&\partial _{t_j}D(\delta_j,\xi^0_j,\eta_j)\\=&\delta'_j\partial_{\delta_j}D(\delta_j,\xi^0_j,\eta_j)\\
  =&\delta'_j\Big(\frac{1}{q+1}+\frac{\beta\varepsilon}{(q+1)^2}\Big)\frac{\partial}{\partial{\delta_j}}\int\limits_{\mathbb{R}^N}\tilde{a}(\delta_jz+\delta_j\eta_j)U_{1,0}^{q+1}\Big(1-\frac{\delta_j^2}{4}\sum\limits_{s,t,r=1}^N\frac{\partial^2g_{\xi^0_j}^{rr}}{\partial{y_s}\partial{y_t}}(0)(z_s+\eta_{js})(z_t+\eta_{jt})\Big)dz\\
  &-\frac{\beta \delta'_j\varepsilon}{q+1}\frac{\partial}{\partial{\delta_j}}\int\limits_{\mathbb{R}^N}\tilde{a}(\delta_jz+\delta_j\eta_j)U_{1,0}^{q+1}\log \big(\delta_j^{-\frac{N}{q+1}}U_{1,0}\big)\Big(1-\frac{\delta_j^2}{4}\sum\limits_{s,t,r=1}^N\frac{\partial^2g_{\xi^0_j}^{rr}}{\partial{y_s}\partial{y_t}}(0)(z_s+\eta_{js})(z_t+\eta_{jt})\Big)dz+o(\varepsilon)\\
  =&\frac{\delta'_j}{q+1}\int\limits_{\mathbb{R}^N}\sum\limits_{s=1}^N\frac{\partial\tilde{a}}{\partial y_s}(\delta_jz+\delta_j\eta_j)(z_s+\eta_{js})U_{1,0}^{q+1}dz+\frac{N\beta \delta'_j\varepsilon}{(q+1)^2\delta_j}\int\limits_{\mathbb{R}^N}\tilde{a}(\delta_jz+\delta_j\eta_j)U_{1,0}^{q+1}dz
  \\&-\frac{\delta_j'\delta_j}{2(q+1)}\int\limits_{\mathbb{R}^N}\tilde{a}(\delta_jz+\delta_j\eta_j)U_{1,0}^{q+1}\sum\limits_{s,t,r=1}^N\frac{\partial^2g_{\xi^0_j}^{rr}}{\partial{y_s}\partial{y_t}}(0)(z_s+\eta_{js})(z_t+\eta_{jt})dz
+o(\varepsilon)\\
=&\frac{\varepsilon}{2(q+1)}\sum\limits_{s=1}^N\frac{\partial\tilde{a}}{\partial y_s^2}(0)\int\limits_{\mathbb{R}^N}z_s^2U_{1,0}^{q+1}dz+\frac{\varepsilon}{2(q+1)}\sum\limits_{s,t=1}^N\frac{\partial\tilde{a}}{\partial y_s\partial y_t}(0)\eta_{js}\eta_{jt}\int\limits_{\mathbb{R}^N}U_{1,0}^{q+1}dz+\frac{N\beta \tilde{a}(0)\varepsilon}{2(q+1)^2t_j}\int\limits_{\mathbb{R}^N}U_{1,0}^{q+1}dz\\
&-\frac{\tilde{a}(0)\varepsilon}{4(q+1)}\sum\limits_{s,r=1}^N\frac{\partial^2g_{\xi^0_j}^{rr}}{\partial{y_s}^2}(0)\int\limits_{\mathbb{R}^N}U_{1,0}^{q+1}z_s^2dz-\frac{\tilde{a}(0)\varepsilon}{4(q+1)}\sum\limits_{s,t,r=1}^N\frac{\partial^2g_{\xi^0_j}^{rr}}{\partial{y_s}\partial{y_t}}(0)\eta_{js}\eta_{jt}\int\limits_{\mathbb{R}^N}U_{1,0}^{q+1}dz
+o(\varepsilon).
\end{align*}
 Using \eqref{ge1} and \eqref{ge2} again, we complete the $C^1$-estimate with respect to $\bar{t}$ in compact subsets of $(\mathbb{R}^+)^k$.

In a similar way, we consider the derivative with respect to $\bar{\eta}$ in $(\mathbb{R}^N)^k$. For any $1\leq s  \leq N$, we have
\begin{align*}
  \partial _{\eta_{js}}A(\delta_j,\xi^0_j,\eta_j)=&\frac{\partial}{\partial \eta_{js}}\int\limits_{\mathbb{R}^N}\tilde{a}(\delta_jz+\delta_j\eta_j)\sum\limits_{a,b=1}^Ng^{ab}_{\xi^0_j}(\delta_j z+\delta_j \eta_j)\partial_{z_a}\big(\chi_{\delta_j,\eta_j}U_{1,0}(z)\big)\partial_{z_b}
  \big(\chi_{\delta_j,\eta_j}V_{1,0}(z)\big)\\
  &\times \big|g_{\xi^0_j}(\delta_j z+\delta_j \eta_j)\big|^{1/2}dz\\
  =&\delta_j\int\limits_{\mathbb{R}^N}\sum\limits_{s=1}^N\frac{\partial \tilde{a}}{\partial y_s}(\delta_jz+\delta_j\eta_j)\sum\limits_{a,b=1}^Ng^{ab}_{\xi^0_j}(\delta_j z+\delta_j \eta_j)\partial_{z_a}U_{1,0}(z)\partial_{z_b}
  V_{1,0}(z)dz\\
  &+\delta_j\int\limits_{\mathbb{R}^N}\tilde{a}(\delta_jz+\delta_j\eta_j)\sum\limits_{a,b,s=1}^N\frac{\partial g^{ab}_{\xi^0_j}}{\partial y_s}(\delta_j z+\delta_j \eta_j)\partial_{z_a}U_{1,0}(z)\partial_{z_b}
  V_{1,0}(z)dz
  \\&-\frac{\delta_j^2}{2}\int\limits_{\mathbb{R}^N}\tilde{a}(\delta_jz+\delta_j\eta_j)\sum\limits_{a,b=1}^Ng^{ab}_{\xi^0_j}(\delta_j z+\delta_j \eta_j)\partial_{z_a}U_{1,0}(z)\partial_{z_b}
  V_{1,0}(z) \\
  &\times\sum\limits_{s,t,r=1}^N\frac{\partial^2g_{\xi^0_j}^{rr}}{\partial{y_s}\partial{y_t}}(0)(z_t+\eta_{jt})dz+o(\varepsilon)\\
  =&\delta_j^2\int\limits_{\mathbb{R}^N}\sum\limits_{s,t=1}^N\frac{\partial^2 \tilde{a}}{\partial y_s\partial y_t}(0)(z_t+\eta_{jt})\nabla U_{1,0}\cdot \nabla V_{1,0}dz\\&+
  \delta_j^2\int\limits_{\mathbb{R}^N}\tilde{a}(0)\sum\limits_{a,b,s,t=1}^N\frac{\partial^2 g^{ab}_{\xi^0_j}}{\partial y_s\partial y_t}(0)(z_t+ \eta_{jt})\partial_{z_a}U_{1,0}(z)\partial_{z_b}
  V_{1,0}(z)dz\\
  &-\frac{\delta_j^2}{2}\int\limits_{\mathbb{R}^N}\tilde{a}(0)\nabla U_{1,0} \cdot \nabla V_{1,0} \sum\limits_{s,t,r=1}^N\frac{\partial^2g_{\xi^0_j}^{rr}}{\partial{y_s}\partial{y_t}}(0)(z_t+\eta_{jt})dz+o(\varepsilon)\\
  =&\varepsilon t_j\sum\limits_{s,t=1}^N\frac{\partial^2 \tilde{a}}{\partial y_s\partial y_t}(0)\eta_{jt}\int\limits_{\mathbb{R}^N}\nabla U_{1,0}\cdot \nabla V_{1,0}dz\\&+
  \varepsilon t_j\tilde{a}(0)\sum\limits_{a,s,t=1}^N\frac{\partial^2 g^{aa}_{\xi^0_j}}{\partial y_s\partial y_t}(0) \eta_{jt}\int\limits_{\mathbb{R}^N}\frac{U'_{1,0}(z)V'_{1,0}(z)}{|z|^2}z_a^2dz\\
  &-\frac{ \varepsilon t_j \tilde{a}(0)}{2}\sum\limits_{s,t,r=1}^N\frac{\partial^2g_{\xi^0_j}^{rr}}{\partial{y_s}\partial{y_t}}(0)\eta_{jt}\int\limits_{\mathbb{R}^N}\nabla U_{1,0} \cdot \nabla V_{1,0} dz+o(\varepsilon),
\end{align*}
\begin{align*}
  \partial _{\eta_{js}}B(\delta_j,\xi^0_j,\eta_j)=&\frac{\partial}{\partial \eta_{js}}\int\limits_{\mathbb{R}^N}\delta_j^2\tilde{a}(\delta_jz+\delta_j\eta_j)\tilde{h}(\delta_jz+\delta_j\eta_j)\chi_{\delta_j,\eta_j}^2U_{1,0}(z)V_{1,0}(z)\big|g_{\xi^0_j}(\delta_j z+\delta_j \eta_j)\big|^{1/2}dz=o(\varepsilon),
\end{align*}
and
\begin{align*}
   &\partial _{\eta_{js}}C(\delta_j,\xi^0_j,\eta_j)\\
   =&\Big(\frac{1}{p+1}+\frac{\alpha\varepsilon}{(p+1)^2}\Big)\frac{\partial}{\partial{\eta_{js}}}\int\limits_{\mathbb{R}^N}\tilde{a}(\delta_jz+\delta_j\eta_j)V_{1,0}^{p+1}\Big(1-\frac{\delta_j^2}{4}\sum\limits_{s,t,r=1}^N\frac{\partial^2g_{\xi^0_j}^{rr}}{\partial{y_s}\partial{y_t}}(0)(z_s+\eta_{js})(z_t+\eta_{jt})\Big)dz\\
  &-\frac{\alpha\varepsilon}{p+1}\frac{\partial}{\partial{\eta_{js}}}\int\limits_{\mathbb{R}^N}\tilde{a}(\delta_jz+\delta_j\eta_j)V_{1,0}^{p+1}\log \big(\delta_j^{-\frac{N}{p+1}}V_{1,0}\big)\Big(1-\frac{\delta_j^2}{4}\sum\limits_{s,t,r=1}^N\frac{\partial^2g_{\xi^0_j}^{rr}}{\partial{y_s}\partial{y_t}}(0)(z_s+\eta_{js})(z_t+\eta_{jt})\Big)dz+o(\varepsilon)\\
  =&\frac{\delta_j}{p+1}\int\limits_{\mathbb{R}^N}\sum\limits_{s=1}^N\frac{\partial\tilde{a}}{\partial y_s}(\delta_jz+\delta_j\eta_j)V_{1,0}^{p+1}
 dz-\frac{\delta_j^2}{2(p+1)}\int\limits_{\mathbb{R}^N}\tilde{a}(\delta_jz+\delta_j\eta_j)V_{1,0}^{p+1}
 \sum\limits_{s,t,r=1}^N\frac{\partial^2g_{\xi^0_j}^{rr}}{\partial{y_s}\partial{y_t}}(0)(z_t+\eta_{jt})dz\\
 =&\frac{\delta_j^2}{p+1}\int\limits_{\mathbb{R}^N}\sum\limits_{s,t=1}^N\frac{\partial^2\tilde{a}}{\partial y_s\partial y_t}(0)(z_t+\eta_{jt})V_{1,0}^{p+1}
 dz-\frac{\delta_j^2}{2(p+1)}\int\limits_{\mathbb{R}^N}\tilde{a}(0)V_{1,0}^{p+1}
 \sum\limits_{s,t,r=1}^N\frac{\partial^2g_{\xi^0_j}^{rr}}{\partial{y_s}\partial{y_t}}(0)(z_t+\eta_{jt})dz\\
 =&\frac{\varepsilon t_j}{p+1}\sum\limits_{s,t=1}^N\frac{\partial^2\tilde{a}}{\partial y_s\partial y_t}(0)\eta_{jt}\int\limits_{\mathbb{R}^N}
 V_{1,0}^{p+1}
 dz-\frac{\varepsilon t_j \tilde{a}(0)}{2(p+1)}\sum\limits_{s,t,r=1}^N\frac{\partial^2g_{\xi^0_j}^{rr}}{\partial{y_s}\partial{y_t}}(0)\eta_{jt}\int\limits_{\mathbb{R}^N}V_{1,0}^{p+1}
 dz,
\end{align*}
\begin{align*}
   &\partial _{\eta_{js}}D(\delta_j,\xi^0_j,\eta_j)\\
   =&
   \Big(\frac{1}{q+1}+\frac{\beta\varepsilon}{(q+1)^2}\Big)\frac{\partial}{\partial{\eta_{js}}}\int\limits_{\mathbb{R}^N}\tilde{a}(\delta_jz+\delta_j\eta_j)U_{1,0}^{q+1}\Big(1-\frac{\delta_j^2}{4}\sum\limits_{s,t,r=1}^N\frac{\partial^2g_{\xi^0_j}^{rr}}{\partial{y_s}\partial{y_t}}(0)(z_s+\eta_{js})(z_t+\eta_{jt})\Big)dz\\
  &-\frac{\beta\varepsilon}{q+1}\frac{\partial}{\partial{\eta_{js}}}\int\limits_{\mathbb{R}^N}\tilde{a}(\delta_jz+\delta_j\eta_j)U_{1,0}^{q+1}\log \big(\delta_j^{-\frac{N}{q+1}}U_{1,0}\big)\Big(1-\frac{\delta_j^2}{4}\sum\limits_{s,t,r=1}^N\frac{\partial^2g_{\xi^0_j}^{rr}}{\partial{y_s}\partial{y_t}}(0)(z_s+\eta_{js})(z_t+\eta_{jt})\Big)dz+o(\varepsilon)\\
  =&\frac{\delta_j}{q+1}\int\limits_{\mathbb{R}^N}\sum\limits_{s=1}^N\frac{\partial\tilde{a}}{\partial y_s}(\delta_jz+\delta_j\eta_j)U_{1,0}^{q+1}
 dz-\frac{\delta_j^2}{2(q+1)}\int\limits_{\mathbb{R}^N}\tilde{a}(\delta_jz+\delta_j\eta_j)U_{1,0}^{q+1}
 \sum\limits_{s,t,r=1}^N\frac{\partial^2g_{\xi^0_j}^{rr}}{\partial{y_s}\partial{y_t}}(0)(z_t+\eta_{jt})dz\\
 =&\frac{\delta_j^2}{q+1}\int\limits_{\mathbb{R}^N}\sum\limits_{s,t=1}^N\frac{\partial^2\tilde{a}}{\partial y_s\partial y_t}(0)(z_t+\eta_{jt})U_{1,0}^{q+1}
 dz-\frac{\delta_j^2}{2(q+1)}\int\limits_{\mathbb{R}^N}\tilde{a}(0)U_{1,0}^{q+1}
 \sum\limits_{s,t,r=1}^N\frac{\partial^2g_{\xi^0_j}^{rr}}{\partial{y_s}\partial{y_t}}(0)(z_t+\eta_{jt})dz\\
 =&\frac{\varepsilon t_j}{q+1}\sum\limits_{s,t=1}^N\frac{\partial^2\tilde{a}}{\partial y_s\partial y_t}(0)\eta_{jt}\int\limits_{\mathbb{R}^N}
 U_{1,0}^{q+1}
 dz-\frac{\varepsilon t_j \tilde{a}(0)}{2(q+1)}\sum\limits_{s,t,r=1}^N\frac{\partial^2g_{\xi^0_j}^{rr}}{\partial{y_s}\partial{y_t}}(0)\eta_{jt}\int\limits_{\mathbb{R}^N}U_{1,0}^{q+1}
 dz.
\end{align*}
So we have the $C^1$-estimate with respect to $\bar{\eta}$ in $(\mathbb{R}^N)^k$.
\end{proof}

We now give the asymptotic expansion of the function $\widetilde{\mathcal{J}}_\varepsilon$ defined in \eqref{defj} as $\varepsilon\rightarrow0$.
\begin{lemma}\label{topr}
Under the assumptions  of Theorem \ref{th}, if $\bar{\delta}$ is as in \eqref{solu'}, then there holds
\begin{align*}
 \widetilde{\mathcal{J}}_{\varepsilon}(\bar{t},\bar{\eta})= \mathcal{J}_{\varepsilon}(\mathcal{W}_{\bar{\delta},\bar{\xi^0},\bar{\eta}},\mathcal{H}_{\bar{\delta},\bar{\xi^0},\bar{\eta}})+o(\varepsilon),
\end{align*}
as $\varepsilon\rightarrow0$,  $C^1$-uniformly with respect to $\bar{\eta}$ in $(\mathbb{R}^N)^k$ and to $\bar{t}$ in compact subsets of $(\mathbb{R}^+)^k$.
\end{lemma}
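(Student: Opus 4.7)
The plan is to Taylor expand $\mathcal{J}_\varepsilon$ around $(\mathcal{W},\mathcal{H}):=(\mathcal{W}_{\bar\delta,\bar\xi^0,\bar\eta},\mathcal{H}_{\bar\delta,\bar\xi^0,\bar\eta})$ and exploit the bounds $\|(\Psi,\Phi)\|\le C\varepsilon|\log\varepsilon|$ from Proposition \ref{propo1} and $\|\mathcal{R}_{\varepsilon,\bar t,\bar\xi^0,\bar\eta}\|\le C\varepsilon|\log\varepsilon|$ from Lemma \ref{error}, where I abbreviate $(\Psi,\Phi):=(\Psi_{\varepsilon,\bar t,\bar\xi^0,\bar\eta},\Phi_{\varepsilon,\bar t,\bar\xi^0,\bar\eta})$. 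First I would write
\begin{equation*}
  \widetilde{\mathcal{J}}_\varepsilon(\bar t,\bar\eta) - \mathcal{J}_\varepsilon(\mathcal{W},\mathcal{H}) = \mathcal{J}_\varepsilon'(\mathcal{W},\mathcal{H})(\Psi,\Phi) + \mathcal{Q}(\Psi,\Phi),
\end{equation*}
where $\mathcal{Q}$ is the second-and-higher-order Taylor remainder.

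For the $C^0$ estimate, the linear piece is treated via the decomposition $(\mathcal{W},\mathcal{H})-\mathcal{I}^*(af_\varepsilon(\mathcal{H}),ag_\varepsilon(\mathcal{W})) = -\mathcal{R}_{\varepsilon,\bar t,\bar\xi^0,\bar\eta} + (\text{element of }\mathcal{Y}_{\bar\delta,\bar\xi^0,\bar\eta})$; since $(\Psi,\Phi)\in\mathcal{Z}_{\bar\delta,\bar\xi^0,\bar\eta}$ annihilates the $\mathcal{Y}$-component under $\langle\cdot,\cdot\rangle_h$, one gets $\mathcal{J}_\varepsilon'(\mathcal{W},\mathcal{H})(\Psi,\Phi)=-\langle\mathcal{R}_{\varepsilon,\bar t,\bar\xi^0,\bar\eta},(\Psi,\Phi)\rangle_h = O(\varepsilon^2|\log\varepsilon|^2)=o(\varepsilon)$. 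The quadratic-and-higher remainder $\mathcal{Q}$ is controlled by Hölder's inequality (using the Sobolev embeddings $\dot W^{1,p^*}(\mathcal{M})\hookrightarrow L^{q+1}(\mathcal{M})$ and $\dot W^{1,q^*}(\mathcal{M})\hookrightarrow L^{p+1}(\mathcal{M})$ valid on the critical hyperbola \eqref{pq}) together with Lemma \ref{gs}; since $p+1,q+1>2$, we obtain $|\mathcal{Q}(\Psi,\Phi)|\le C\|(\Psi,\Phi)\|^2+C\|(\Psi,\Phi)\|^{\min\{p+1-\alpha\varepsilon,\,q+1-\beta\varepsilon\}}=o(\varepsilon)$.

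For the $C^1$ estimate with respect to $s\in\{t_m,\eta_{ml}\}$, I would differentiate both sides to get
\begin{equation*}
\partial_s\bigl[\widetilde{\mathcal{J}}_\varepsilon - \mathcal{J}_\varepsilon(\mathcal{W},\mathcal{H})\bigr] = \bigl[\mathcal{J}_\varepsilon'(\mathcal{W}+\Psi,\mathcal{H}+\Phi)-\mathcal{J}_\varepsilon'(\mathcal{W},\mathcal{H})\bigr](\partial_s\mathcal{W},\partial_s\mathcal{H}) + \mathcal{J}_\varepsilon'(\mathcal{W}+\Psi,\mathcal{H}+\Phi)(\partial_s\Psi,\partial_s\Phi).
\end{equation*}
The identities \eqref{ch1}-\eqref{ch2} show $(\partial_s\mathcal{W},\partial_s\mathcal{H})\in\mathcal{Y}_{\bar\delta,\bar\xi^0,\bar\eta}$, so the quadratic-in-gradients part of the first bracket equals $\langle(\Psi,\Phi),(\partial_s\mathcal{W},\partial_s\mathcal{H})\rangle_h=0$; the remaining nonlinear contributions are reduced to $o(\varepsilon)$ by Taylor expansion combined with an integration-by-parts that replaces $\mathcal{W}^{q-1}\partial_s\mathcal{W}$ and $\mathcal{H}^{p-1}\partial_s\mathcal{H}$ by $-\Delta_g\partial_s\mathcal{H}/q$ and $-\Delta_g\partial_s\mathcal{W}/p$ (up to corrections of order $\delta^2$) thanks to the linearized system satisfied by $(\Psi^l_{1,0},\Phi^l_{1,0})$ in Lemma \ref{nonde}, and then by the orthogonality $(\Psi,\Phi)\in\mathcal{Z}_{\bar\delta,\bar\xi^0,\bar\eta}$. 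For the second bracket, \eqref{tou2} yields $(\mathcal{W}+\Psi,\mathcal{H}+\Phi)-\mathcal{I}^*(af_\varepsilon(\mathcal{H}+\Phi),ag_\varepsilon(\mathcal{W}+\Psi))\in\mathcal{Y}_{\bar\delta,\bar\xi^0,\bar\eta}$ with coefficients of order $\varepsilon|\log\varepsilon|$, while differentiating $\langle(\Psi,\Phi),(\Psi^i_{\delta_j,\xi^0_j,\eta_j},\Phi^i_{\delta_j,\xi^0_j,\eta_j})\rangle_h=0$ in $s$ gives $\langle(\partial_s\Psi,\partial_s\Phi),(\Psi^i_{\delta_j,\xi^0_j,\eta_j},\Phi^i_{\delta_j,\xi^0_j,\eta_j})\rangle_h=O(\varepsilon|\log\varepsilon|)$; the product again delivers an $O(\varepsilon^2|\log\varepsilon|^2)=o(\varepsilon)$ bound.

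The main obstacle will be the nonlinear part of the first bracket in the $C^1$ estimate: a naive Hölder estimate yields only $O(\varepsilon|\log\varepsilon|)$, which is not $o(\varepsilon)$, so the integration-by-parts just described, together with the $\delta^2$-smallness of the metric and $h$ corrections coming from the exponential-map expansions already used in the proofs of Lemmas \ref{line}, \ref{error}, and \ref{e5.1}, is essential. Uniformity in $\bar\eta\in(\mathbb{R}^N)^k$ and in $\bar t$ in compact subsets of $(\mathbb{R}^+)^k$ follows from the critical scaling invariance of the $\mathcal{X}_{p,q}(\mathcal{M})$-norms enforced by \eqref{pq}, and from uniform bounds on $\|\partial_s(\Psi^i_{\delta_j,\xi^0_j,\eta_j},\Phi^i_{\delta_j,\xi^0_j,\eta_j})\|$ that stem from the scaling relation $\delta_j'=\delta_j/(2t_j)$.
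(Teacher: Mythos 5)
Your proposal follows essentially the same route as the paper's proof: the $C^0$ part reduces the linear term to a pairing of the residual $\mathcal{R}_{\varepsilon,\bar{t},\bar{\xi^0},\bar{\eta}}$ with $(\Psi_{\varepsilon,\bar{t},\bar{\xi^0},\bar{\eta}},\Phi_{\varepsilon,\bar{t},\bar{\xi^0},\bar{\eta}})$ giving $O(\varepsilon^2|\log\varepsilon|^2)$ and controls the Taylor remainder by Lemma \ref{gs} and H\"older, while the $C^1$ part uses the identical decomposition via \eqref{ch1}--\eqref{ch2}, the key cancellation through the linearized equation of Lemma \ref{nonde} (which the paper phrases as smallness of $\|-\Delta_g\Psi^l+h\Psi^l-f'_\varepsilon(\mathcal{H})\Phi^l\|_{\frac{p+1}{p}}$ rather than your orthogonality-plus-integration-by-parts, an equivalent bookkeeping), and the differentiated orthogonality relation together with the $O(\varepsilon^\vartheta)$ bound on the coefficients $c_{lm}$. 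The argument is correct and matches the paper's.
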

\begin{proof}
It's holds that
\begin{align*}
  &\widetilde{\mathcal{J}}_{\varepsilon}(\bar{t},\bar{\eta})- \mathcal{J}_{\varepsilon}(\mathcal{W}_{\bar{\delta},\bar{\xi^0},\bar{\eta}},\mathcal{H}_{\bar{\delta},\bar{\xi^0},\bar{\eta}})\\
  =&\int\limits_{\mathcal{M}}a(x)\big(\nabla_g \mathcal{W}_{\bar{\delta},\bar{\xi^0},\bar{\eta}}\cdot \nabla \Phi_{\varepsilon,\bar{t},\bar{\xi^0},\bar{\eta}}+h\mathcal{W}_{\bar{\delta},\bar{\xi^0},\bar{\eta}}\Phi_{\varepsilon,\bar{t},\bar{\xi^0},\bar{\eta}}-f_\varepsilon(\mathcal{H}_{\bar{\delta},\bar{\xi^0},\bar{\eta}})\Phi_{\varepsilon,\bar{t},\bar{\xi^0},\bar{\eta}}\big)d v_g\\
  &+\int\limits_{\mathcal{M}}a(x)\big(\nabla_g \mathcal{H}_{\bar{\delta},\bar{\xi^0},\bar{\eta}}\cdot \nabla \Psi_{\varepsilon,\bar{t},\bar{\xi^0},\bar{\eta}}+h\mathcal{H}_{\bar{\delta},\bar{\xi^0},\bar{\eta}}\Psi_{\varepsilon,\bar{t},\bar{\xi^0},\bar{\eta}}-g_\varepsilon(\mathcal{W}_{\bar{\delta},\bar{\xi^0},\bar{\eta}})\Psi_{\varepsilon,\bar{t},\bar{\xi^0},\bar{\eta}}\big)d v_g\\
  &+\int\limits_{\mathcal{M}}a(x)\big(\nabla_g \Psi_{\varepsilon,\bar{t},\bar{\xi^0},\bar{\eta}}\cdot \nabla _g \Phi_{\varepsilon,\bar{t},\bar{\xi^0},\bar{\eta}}+h\Psi_{\varepsilon,\bar{t},\bar{\xi^0},\bar{\eta}}\Phi_{\varepsilon,\bar{t},\bar{\xi^0},\bar{\eta}}\big)dv_g\\
  &-\int\limits_{\mathcal{M}}a(x)\big(F_\varepsilon(\mathcal{H}_{\bar{\delta},\bar{\xi^0},\bar{\eta}}+\Phi_{\varepsilon,\bar{t},\bar{\xi^0},\bar{\eta}})-
  F_\varepsilon(\mathcal{H}_{\bar{\delta},\bar{\xi^0},\bar{\eta}})-f_\varepsilon(\mathcal{H}_{\bar{\delta},\bar{\xi^0},\bar{\eta}}) \Phi_{\varepsilon,\bar{t},\bar{\xi^0},\bar{\eta}}\big)dv_g\\
  &-\int\limits_{\mathcal{M}}a(x)\big(G_\varepsilon(\mathcal{W}_{\bar{\delta},\bar{\xi^0},\bar{\eta}}+\Psi_{\varepsilon,\bar{t},\bar{\xi^0},\bar{\eta}})-
  G_\varepsilon(\mathcal{W}_{\bar{\delta},\bar{\xi^0},\bar{\eta}})-g_\varepsilon(\mathcal{W}_{\bar{\delta},\bar{\xi^0},\bar{\eta}}) \Psi_{\varepsilon,\bar{t},\bar{\xi^0},\bar{\eta}}\big)dv_g,
\end{align*}
where $F_\varepsilon(u)=\int\limits_{0}^uf_\varepsilon(s)ds$, $G_\varepsilon(u)=\int\limits_{0}^ug_\varepsilon(s)ds$.
Since
\begin{equation*}
  \int\limits_{\mathcal{M}}a(x)\nabla_g \mathcal{W}_{\bar{\delta},\bar{\xi^0},\bar{\eta}}\cdot \nabla \Phi_{\varepsilon,\bar{t},\bar{\xi^0},\bar{\eta}}dv_g=-\int\limits_{\mathcal{M}}\Phi_{\varepsilon,\bar{t},\bar{\xi^0},\bar{\eta}}\nabla_g a(x)\cdot\nabla_g\mathcal{W}_{\bar{\delta},\bar{\xi^0},\bar{\eta}} dv_g-\int\limits_{\mathcal{M}}a(x)\Phi_{\varepsilon,\bar{t},\bar{\xi^0},\bar{\eta}}\Delta_g \mathcal{W}_{\bar{\delta},\bar{\xi^0},\bar{\eta}}dv_g,
\end{equation*}
and
\begin{equation*}
  \int\limits_{\mathcal{M}}a(x)\nabla_g \mathcal{H}_{\bar{\delta},\bar{\xi^0},\bar{\eta}}\cdot \nabla \Psi_{\varepsilon,\bar{t},\bar{\xi^0},\bar{\eta}}dv_g=-\int\limits_{\mathcal{M}}\Psi_{\varepsilon,\bar{t},\bar{\xi^0},\bar{\eta}}\nabla_g a(x)\cdot\nabla_g\mathcal{H}_{\bar{\delta},\bar{\xi^0},\bar{\eta}} dv_g-\int\limits_{\mathcal{M}}a(x)\Psi_{\varepsilon,\bar{t},\bar{\xi^0},\bar{\eta}}\Delta_g \mathcal{H}_{\bar{\delta},\bar{\xi^0},\bar{\eta}}dv_g,
\end{equation*}
we need estimates of
\begin{equation*}
  \int\limits_{\mathcal{M}}\Phi_{\varepsilon,\bar{t},\bar{\xi^0},\bar{\eta}}\nabla_g a(x)\cdot\nabla_g\mathcal{W}_{\bar{\delta},\bar{\xi^0},\bar{\eta}} dv_g\quad \text{and}\quad \int\limits_{\mathcal{M}}\Psi_{\varepsilon,\bar{t},\bar{\xi^0},\bar{\eta}}\nabla_g a(x)\cdot\nabla_g\mathcal{H}_{\bar{\delta},\bar{\xi^0},\bar{\eta}} dv_g.
\end{equation*}
By \eqref{new1}, \eqref{new2} and Proposition \ref{propo1}, we have
\begin{equation*}
  \int\limits_{\mathcal{M}}\Phi_{\varepsilon,\bar{t},\bar{\xi^0},\bar{\eta}}\nabla_g a(x)\cdot\nabla_g\mathcal{W}_{\bar{\delta},\bar{\xi^0},\bar{\eta}} dv_g\leq \|\nabla_g a(x)\cdot\nabla_g\mathcal{W}_{\bar{\delta},\bar{\xi^0},\bar{\eta}}\|_{\frac{p+1}{p}}\|\Phi_{\varepsilon,\bar{t},\bar{\xi^0},\bar{\eta}}\|_{p+1}=o(\varepsilon),
\end{equation*}
and
\begin{equation*}
\int\limits_{\mathcal{M}}\Psi_{\varepsilon,\bar{t},\bar{\xi^0},\bar{\eta}}\nabla_g a(x)\cdot\nabla_g\mathcal{H}_{\bar{\delta},\bar{\xi^0},\bar{\eta}} dv_g\leq \|\nabla_g a(x)\cdot\nabla_g\mathcal{H}_{\bar{\delta},\bar{\xi^0},\bar{\eta}} dv_g\|_{\frac{q+1}{q}}\|\Psi_{\varepsilon,\bar{t},\bar{\xi^0},\bar{\eta}}\|_{q+1}=o(\varepsilon).
\end{equation*}
By Proposition \ref{propo1}, Lemma \ref{error}, using the H\"{o}lder and Sobolev inequalities, we get
\begin{align*}
  &\int\limits_{\mathcal{M}}a(x)\big(-\Delta_g  \mathcal{W}_{\bar{\delta},\bar{\xi^0},\bar{\eta}}+h\mathcal{W}_{\bar{\delta},\bar{\xi^0},\bar{\eta}}-f_\varepsilon(\mathcal{H}_{\bar{\delta},\bar{\xi^0},\bar{\eta}})\big)
  \Phi_{\varepsilon,\bar{t},
  \bar{\xi^0},\bar{\eta}}d v_g\\
  \leq& C\|-\Delta_g  \mathcal{W}_{\bar{\delta},\bar{\xi^0},\bar{\eta}}+h\mathcal{W}_{\bar{\delta},\bar{\xi^0},\bar{\eta}}-f_\varepsilon(\mathcal{H}_{\bar{\delta},\bar{\xi^0},\bar{\eta}})
  \|_{\frac{p+1}{p}}\|\Phi_{\varepsilon,\bar{t},
  \bar{\xi^0},\bar{\eta}}\|_{p+1}=o(\varepsilon),
\end{align*}
\begin{align*}
  &\int\limits_{\mathcal{M}}a(x)\big(-\Delta_g  \mathcal{H}_{\bar{\delta},\bar{\xi^0},\bar{\eta}}+h\mathcal{H}_{\bar{\delta},\bar{\xi^0},\bar{\eta}}-g_\varepsilon(\mathcal{W}_{\bar{\delta},\bar{\xi^0},\bar{\eta}})\big)
  \Psi_{\varepsilon,\bar{t},\bar{\xi^0},\bar{\eta}}d v_g\\
  \leq& C \|-\Delta_g  \mathcal{H}_{\bar{\delta},\bar{\xi^0},\bar{\eta}}+h\mathcal{H}_{\bar{\delta},\bar{\xi^0},\bar{\eta}}-g_\varepsilon(\mathcal{W}_{\bar{\delta},\bar{\xi^0},\bar{\eta}})\|_{\frac{q+1}{q}}
  \|\Psi_{\varepsilon,\bar{t},\bar{\xi^0},\bar{\eta}}\|_{q+1}=o(\varepsilon),
  \end{align*}
  and
\begin{align*}
  &\int\limits_{\mathcal{M}}a(x)\big(\nabla_g \Psi_{\varepsilon,\bar{t},\bar{\xi^0},\bar{\eta}}\cdot \nabla _g \Phi_{\varepsilon,\bar{t},\bar{\xi^0},\bar{\eta}}+h\Psi_{\varepsilon,\bar{t},\bar{\xi^0},\bar{\eta}}\Phi_{\varepsilon,\bar{t},\bar{\xi^0},\bar{\eta}}\big)dv_g\\
  \leq &C\|\nabla_g \Psi_{\varepsilon,\bar{t},\bar{\xi^0},\bar{\eta}}\|_{p^*}\|\nabla_g \Phi_{\varepsilon,\bar{t},\bar{\xi^0},\bar{\eta}}\|_{q^*}+C\|\Psi_{\varepsilon,\bar{t},\bar{\xi^0},\bar{\eta}}\|_2\|\Phi_{\varepsilon,\bar{t},\bar{\xi^0},\bar{\eta}}\|_2=o(\varepsilon).
\end{align*}
Moreover, by the mean value formula, Lemma \ref{gs},  we obtain
\begin{align*}
  &\int\limits_{\mathcal{M}}a(x)\big(F_\varepsilon(\mathcal{H}_{\bar{\delta},\bar{\xi^0},\bar{\eta}}+\Phi_{\varepsilon,\bar{t},\bar{\xi^0},\bar{\eta}})-
  F_\varepsilon(\mathcal{H}_{\bar{\delta},\bar{\xi^0},\bar{\eta}})-f_\varepsilon(\mathcal{H}_{\bar{\delta},\bar{\xi^0},\bar{\eta}}) \Phi_{\varepsilon,\bar{t},\bar{\xi^0},\bar{\eta}}\big)dv_g\nonumber\\
  \leq& C\int\limits_{\mathcal{M}}\mathcal{H}_{\bar{\delta},\bar{\xi^0},\bar{\eta}}^{p-1-\alpha\varepsilon}\Phi^2_{\varepsilon,\bar{t},\bar{\xi^0},\bar{\eta}}dv_g+
  C\int\limits_{\mathcal{M}}\Phi_{\varepsilon,\bar{t},\bar{\xi^0},\bar{\eta}}^{p+1-\alpha\varepsilon}dv_g\nonumber\\
  \leq& C\|\Phi_{\varepsilon,\bar{t},\bar{\xi^0},\bar{\eta}}\|_{p+1-\alpha\varepsilon}^2\sum\limits_{j=1}^k\|H_{\delta_j,\xi^0_j,\eta_j}\|_{p+1-\alpha\varepsilon}^{p-1-\alpha\varepsilon}+C\|\Phi_{\varepsilon,\bar{t},\bar{\xi^0},\bar{\eta}}\|_{p+1-\alpha\varepsilon}^{p+1-\alpha\varepsilon}
  =o(\varepsilon),
\end{align*}
and
\begin{align*}
  &\int\limits_{\mathcal{M}}a(x)\big(G_\varepsilon(\mathcal{W}_{\bar{\delta},\bar{\xi^0},\bar{\eta}}+\Psi_{\varepsilon,\bar{t},\bar{\xi^0},\bar{\eta}})-
  G_\varepsilon(\mathcal{W}_{\bar{\delta},\bar{\xi^0},\bar{\eta}})-g_\varepsilon(\mathcal{W}_{\bar{\delta},\bar{\xi^0},\bar{\eta}}) \Psi_{\varepsilon,\bar{t},\bar{\xi^0},\bar{\eta}}\big)dv_g\nonumber\\
  \leq& C\int\limits_{\mathcal{M}}\mathcal{W}_{\bar{\delta},\bar{\xi^0},\bar{\eta}}^{q-1-\beta\varepsilon}\Psi^2_{\varepsilon,\bar{t},\bar{\xi^0},\bar{\eta}}dv_g+
  C\int\limits_{\mathcal{M}}\Psi_{\varepsilon,\bar{t},\bar{\xi^0},\bar{\eta}}^{q+1-\beta\varepsilon}dv_g\nonumber\\
  \leq& C\|\Psi_{\varepsilon,\bar{t},\bar{\xi^0},\bar{\eta}}\|_{q+1-\beta\varepsilon}^2\sum\limits_{j=1}^k\|W_{\delta_j,\xi^0_j,\eta_j}\|_{q+1-\beta\varepsilon}^{q-1-\beta\varepsilon}+
  C\|\Psi_{\varepsilon,\bar{t},\bar{\xi^0},\bar{\eta}}\|_{q+1-\beta\varepsilon}^{q+1-\beta\varepsilon}=o(\varepsilon).
\end{align*}
This completes the proof of the $C^0$-estimate.

Next, we prove the $C^1$-estimate.
For any $(\varphi,\psi)\in \mathcal{X}_{p,q}(\mathcal{M})$, by Proposition \ref{propo1}, there exist constants $c_{01},c_{02},\cdots,c_{0k}, c_{11},c_{12},\cdots,c_{1k},\cdots,c_{N1},c_{N2},\cdots,c_{Nk}$ such that
\begin{equation}\label{real}
  \mathcal{J}_\varepsilon'(\mathcal{W}_{\bar{\delta},\bar{\xi^0},\bar{\eta}}+\Psi_{\varepsilon,\bar{t},\bar{\xi^0},\bar{\eta}},\mathcal{H}_{\bar{\delta},\bar{\xi^0},\bar{\eta}}+\Phi_{\varepsilon,\bar{t},\bar{\xi^0},\bar{\eta}})(\varphi,\psi)=
  \sum\limits_{l=0}^N\sum\limits_{m=1}^kc_{lm}\big\langle\big(\Psi_{\delta_m,\xi^0_m,\eta_m}^l,\Phi^l_{\delta_m,\xi^0_m,\eta_m}\big),(\varphi,\psi)\big\rangle_h.
\end{equation}
Moreover, by \cite[Lemma 5.4]{CW}, we have
\begin{equation}\label{cla}
  \sum\limits_{l=0}^N\sum\limits_{m=1}^k|c_{lm}|=O(\varepsilon^{\vartheta}),
\end{equation}
for any $\vartheta\in (0,1)$. By \eqref{ch1} and \eqref{ch2}, for any $1\leq l \leq N$ and $1\leq m\leq k$, we can compute
  \begin{align*}
  &\partial_{t_m}\widetilde{\mathcal{J}}_{\varepsilon}(\bar{t},\bar{\eta})-\partial_{t_m} \mathcal{J}_\varepsilon\big(\mathcal{W}_{\bar{\delta},\bar{\xi^0},\bar{\eta}},\mathcal{H}_{\bar{\delta},\bar{\xi^0},\bar{\eta}}\big)\nonumber\\
  =&\big(\widetilde{\mathcal{J}}'_{\varepsilon}(\bar{t},\bar{\eta})- \mathcal{J}'_\varepsilon\big(\mathcal{W}_{\bar{\delta},\bar{\xi^0},\bar{\eta}},\mathcal{H}_{\bar{\delta},\bar{\xi^0},\bar{\eta}}\big)\big)\big(\partial_{t_m}\mathcal{W}_{\bar{\delta},\bar{\xi^0},\bar{\eta}},\partial_{t_m}\mathcal{H}_{\bar{\delta},\bar{\xi^0},\bar{\eta}}\big)
+\widetilde{\mathcal{J}}'_{\varepsilon}(\bar{t},\bar{\eta})\big(\partial_{t_m}\Psi_{\varepsilon,\bar{t},\bar{\xi^0},\bar{\eta}},\partial_{t_m}\Phi_{\varepsilon,\bar{t},\bar{\xi^0},\bar{\eta}}\big)\nonumber  \\=&-\frac{1}{t_m}\big(\widetilde{\mathcal{J}}'_{\varepsilon}(\bar{t},\bar{\eta})- \mathcal{J}'_\varepsilon\big(\mathcal{W}_{\bar{\delta},\bar{\xi^0},\bar{\eta}},\mathcal{H}_{\bar{\delta},\bar{\xi^0},\bar{\eta}}\big)\big)\Big(\Psi^0_{\delta_m,\xi^0_m,\eta_m}+\sum\limits_{l=1}^N\eta_{ml}\Psi^l_{\delta_m,\xi^0_m,\eta_m},\Phi^0_{\delta_m,\xi^0_m,\eta_m}+\sum\limits_{l=1}^N\eta_{ml}\Phi^l_{\delta_m,\xi^0_m,\eta_m}\Big)
\nonumber\\&+\mathcal{J}_\varepsilon'(\mathcal{W}_{\bar{\delta},\bar{\xi^0},\bar{\eta}}+\Psi_{\varepsilon,\bar{t},\bar{\xi^0},\bar{\eta}},\mathcal{H}_{\bar{\delta},\bar{\xi^0},\bar{\eta}}+\Phi_{\varepsilon,\bar{t},\bar{\xi^0},\bar{\eta}})\big(\partial_{t_m}\Psi_{\varepsilon,\bar{t},\bar{\xi^0},\bar{\eta}},\partial_{t_m}\Phi_{\varepsilon,\bar{t},\bar{\xi^0},\bar{\eta}}\big),
\end{align*}
  and
  \begin{align*}
  &\partial_{\eta_{ml}}\widetilde{\mathcal{J}}_{\varepsilon}(\bar{t},\bar{\eta})-\partial_{\eta_{ml}} \mathcal{J}_\varepsilon\big(\mathcal{W}_{\bar{\delta},\bar{\xi^0},\bar{\eta}},\mathcal{H}_{\bar{\delta},\bar{\xi^0},\bar{\eta}}\big)\nonumber\\
  =&\big(\widetilde{\mathcal{J}}'_{\varepsilon}(\bar{t},\bar{\eta})- \mathcal{J}'_\varepsilon\big(\mathcal{W}_{\bar{\delta},\bar{\xi^0},\bar{\eta}},\mathcal{H}_{\bar{\delta},\bar{\xi^0},\bar{\eta}}\big)\big)\big(\partial_{\eta_{ml}}\mathcal{W}_{\bar{\delta},\bar{\xi^0},\bar{\eta}},\partial_{\eta_{ml}}\mathcal{H}_{\bar{\delta},\bar{\xi^0},\bar{\eta}}\big)
+\widetilde{\mathcal{J}}'_{\varepsilon}(\bar{t},\bar{\eta})\big(\partial_{\eta_{ml}}\Psi_{\varepsilon,\bar{t},\bar{\xi^0},\bar{\eta}},\partial_{\eta_{ml}}\Phi_{\varepsilon,\bar{t},\bar{\xi^0},\bar{\eta}}\big)\nonumber  \\=&\big(\widetilde{\mathcal{J}}'_{\varepsilon}(\bar{t},\bar{\eta})- \mathcal{J}'_\varepsilon\big(\mathcal{W}_{\bar{\delta},\bar{\xi^0},\bar{\eta}},\mathcal{H}_{\bar{\delta},\bar{\xi^0},\bar{\eta}}\big)\big)\big(\Psi^l_{\delta_m,\xi^0_m,\eta_m},\Phi^l_{\delta_m,\xi^0_m,\eta_m}\big)
\nonumber\\&+\mathcal{J}_\varepsilon'(\mathcal{W}_{\bar{\delta},\bar{\xi^0},\bar{\eta}}+\Psi_{\varepsilon,\bar{t},\bar{\xi^0},\bar{\eta}},\mathcal{H}_{\bar{\delta},\bar{\xi^0},\bar{\eta}}+\Phi_{\varepsilon,\bar{t},\bar{\xi^0},\bar{\eta}})\big(\partial_{\eta_{ml}}\Psi_{\varepsilon,\bar{t},\bar{\xi^0},\bar{\eta}},\partial_{\eta_{ml}}\Phi_{\varepsilon,\bar{t},\bar{\xi^0},\bar{\eta}}\big).
\end{align*}
For any $0\leq l \leq N$ and $1\leq m\leq k$, we have
\begin{align*}
  &\big(\widetilde{\mathcal{J}}'_{\varepsilon}(\bar{t},\bar{\eta})- \mathcal{J}'_\varepsilon\big(\mathcal{W}_{\bar{\delta},\bar{\xi^0},\bar{\eta}},\mathcal{H}_{\bar{\delta},\bar{\xi^0},\bar{\eta}}\big)\big)\big(\Psi^l_{\delta_m,\xi^0_m,\eta_m},\Phi^l_{\delta_m,\xi^0_m,\eta_m}\big)\\
  =&-\int\limits_{\mathcal{M}}\Phi_{\varepsilon,\bar{t},\bar{\xi^0},\bar{\eta}}\nabla_g a(x)\cdot \nabla _g\Psi^l_{\delta_m,\xi^0_m,\eta_m}dv_g-\int\limits_{\mathcal{M}}\Psi_{\varepsilon,\bar{t},\bar{\xi^0},\bar{\eta}}\nabla_g a(x)\cdot \nabla _g\Phi^l_{\delta_m,\xi^0_m,\eta_m}dv_g\\&+
  \int\limits_{\mathcal{M}}a(x)\big(-\Delta_g \Psi^l_{\delta_m,\xi^0_m,\eta_m}+h\Psi^l_{\delta_m,\xi^0_m,\eta_m}-f'_\varepsilon(\mathcal{H}_{\bar{\delta},\bar{\xi^0},\bar{\eta}})\Phi^l_{\delta_m,\xi^0_m,\eta_m}\big)\Phi_{\varepsilon,\bar{t},\bar{\xi^0},\bar{\eta}}d v_g\nonumber\\
  &+\int\limits_{\mathcal{M}}a(x)\big(-\Delta_g \Phi^l_{\delta_m,\xi^0_m,\eta_m}+h\Phi^l_{\delta_m,\xi^0_m,\eta_m}-g'_\varepsilon(\mathcal{W}_{\bar{\delta},\bar{\xi^0},\bar{\eta}})\Psi^l_{\delta_m,\xi^0_m,\eta_m}\big)\Psi_{\varepsilon,\bar{t},\bar{\xi^0},\bar{\eta}}d v_g\nonumber\\
  &-\int\limits_{\mathcal{M}}a(x)\big(f_\varepsilon(\mathcal{H}_{\bar{\delta},\bar{\xi^0},\bar{\eta}}+\Phi_{\varepsilon,\bar{t},\bar{\xi^0},\bar{\eta}})-
  f_\varepsilon(\mathcal{H}_{\bar{\delta},\bar{\xi^0},\bar{\eta}})-f'_\varepsilon(\mathcal{H}_{\bar{\delta},\bar{\xi^0},\bar{\eta}}) \Phi_{\varepsilon,\bar{t},\bar{\xi^0},\bar{\eta}}\big) \Phi^l_{\delta_m,\xi^0_m,\eta_m}dv_g\nonumber\\
  &-\int\limits_{\mathcal{M}}a(x)\big(g_\varepsilon(\mathcal{W}_{\bar{\delta},\bar{\xi^0},\bar{\eta}}+\Psi_{\varepsilon,\bar{t},\bar{\xi^0},\bar{\eta}})-
  g_\varepsilon(\mathcal{W}_{\bar{\delta},\bar{\xi^0},\bar{\eta}})-g'_\varepsilon(\mathcal{W}_{\bar{\delta},\bar{\xi^0},\bar{\eta}}) \Psi_{\varepsilon,\bar{t},\bar{\xi^0},\bar{\eta}}\big) \Psi^l_{\delta_m,\xi^0_m,\eta_m}dv_g.
\end{align*}
Similar to \eqref{new1} and \eqref{new2}, we have
\begin{align*}
  \int\limits_{\mathcal{M}}\big|\nabla_g a(x)\cdot \nabla _g\Psi^l_{\delta_m,\xi^0_m,\eta_m}\big|^{\frac{p+1}{p}}dv_g=O(\varepsilon^{\frac{p+1}{p}})\quad \text{and}\quad
  \int\limits_{\mathcal{M}}\big|\nabla_g a(x)\cdot \nabla _g\Phi^l_{\delta_m,\xi^0_m,\eta_m}\big|^{\frac{q+1}{q}}dv_g=O(\varepsilon^{\frac{q+1}{q}}).
\end{align*}
This with Proposition \ref{propo1} yields
\begin{equation*}
 \int\limits_{\mathcal{M}}\Phi_{\varepsilon,\bar{t},\bar{\xi^0},\bar{\eta}}\nabla_g a(x)\cdot \nabla _g\Psi^l_{\delta_m,\xi^0_m,\eta_m}dv_g=o(\varepsilon) \quad \text{and}\quad\int\limits_{\mathcal{M}}\Psi_{\varepsilon,\bar{t},\bar{\xi^0},\bar{\eta}}\nabla_g a(x)\cdot \nabla _g\Phi^l_{\delta_m,\xi^0_m,\eta_m}dv_g=o(\varepsilon).
\end{equation*}
By Proposition \ref{propo1}, using the H\"{o}lder and Sobolev inequalities,  arguing as Lemma \ref{error}, for any $0\leq l \leq N$ and $1\leq m\leq k$, we have
\begin{align*}
  &\int\limits_{\mathcal{M}}a(x)\big(-\Delta_g \Psi^l_{\delta_m,\xi^0_m,\eta_m}+h\Psi^l_{\delta_m,\xi^0_m,\eta_m}-f'_\varepsilon(\mathcal{H}_{\bar{\delta},\bar{\xi^0},\bar{\eta}})\Phi^l_{\delta_m,\xi^0_m,\eta_m}\big)
  \Phi_{\varepsilon,\bar{t},\bar{\xi^0},\bar{\eta}}d v_g\\
  \leq &
  C\|-\Delta_g \Psi^l_{\delta_m,\xi^0_m,\eta_m}+h\Psi^l_{\delta_m,\xi^0_m,\eta_m}-f'_\varepsilon(\mathcal{H}_{\bar{\delta},\bar{\xi^0},\bar{\eta}})\Phi^l_{\delta_m,\xi^0_m,\eta_m}
  \|_{\frac{p+1}{p}}\|\Phi_{\varepsilon,\bar{t},\bar{\xi^0},\bar{\eta}}\|_{p+1}=o(\varepsilon),
\end{align*}
and
\begin{align*}
  &\int\limits_{\mathcal{M}}a(x)\big(-\Delta_g \Phi^l_{\delta_m,\xi^0_m,\eta_m}+h\Phi^l_{\delta_m,\xi^0_m,\eta_m}-g'_\varepsilon(\mathcal{W}_{\bar{\delta},\bar{\xi^0},\bar{\eta}})\Psi^l_{\delta_m,\xi^0_m,\eta_m}\big)
  \Psi_{\varepsilon,\bar{t},\bar{\xi^0},\bar{\eta}}d v_g\\
  \leq & C\|-\Delta_g \Phi^l_{\delta_m,\xi^0_m,\eta_m}+h\Phi^l_{\delta_m,\xi^0_m,\eta_m}-g'_\varepsilon(\mathcal{W}_{\bar{\delta},\bar{\xi^0},\bar{\eta}})\Psi^l_{\delta_m,\xi^0_m,\eta_m}
  \|_{\frac{q+1}{q}}\|\Psi_{\varepsilon,\bar{t},\bar{\xi^0},\bar{\eta}}\|_{q+1}=o(\varepsilon).
\end{align*}
Moreover, by the mean value formula, Lemma \ref{gs},  we obtain
\begin{align*}
  &\int\limits_{\mathcal{M}}a(x)\big(f_\varepsilon(\mathcal{H}_{\bar{\delta},\bar{\xi^0},\bar{\eta}}+\Phi_{\varepsilon,\bar{t},\bar{\xi^0},\bar{\eta}})-
  f_\varepsilon(\mathcal{H}_{\bar{\delta},\bar{\xi^0},\bar{\eta}})-f'_\varepsilon(\mathcal{H}_{\bar{\delta},\bar{\xi^0},\bar{\eta}}) \Phi_{\varepsilon,\bar{t},\bar{\xi^0},\bar{\eta}}\big) \Phi^l_{\delta_m,\xi^0_m,\eta_m}dv_g\\
  \leq& C\int\limits_{\mathcal{M}}\mathcal{H}_{\bar{\delta},\bar{\xi^0},\bar{\eta}}^{p-2-\alpha\varepsilon}\Phi_{\varepsilon,\bar{t},\bar{\xi^0},\bar{\eta}}^{2}\Phi^l_{\delta_m,\xi^0_m,\eta_m}dv_g \leq C\|\Phi^l_{\delta_m,\xi^0_m,\eta_m}\|_{p+1}\|\Phi_{\varepsilon,\bar{t},\bar{\xi^0},\bar{\eta}}\|_{p+1}^{2}\sum\limits_{j=1}^k\|H_{\delta_j,\xi^0_j,\eta_j}
  \|_{\frac{(p-2-\alpha\varepsilon)(p+1)}{p-2}}^{p-2-\alpha\varepsilon}=o(\varepsilon),
\end{align*}
and
\begin{align*}
  &\int\limits_{\mathcal{M}}a(x)\big(g_\varepsilon(\mathcal{W}_{\bar{\delta},\bar{\xi^0},\bar{\eta}}+\Psi_{\varepsilon,\bar{t},\bar{\xi^0},\bar{\eta}})-
  g_\varepsilon(\mathcal{W}_{\bar{\delta},\bar{\xi^0},\bar{\eta}})-g'_\varepsilon(\mathcal{W}_{\bar{\delta},\bar{\xi^0},\bar{\eta}}) \Psi_{\varepsilon,\bar{t},\bar{\xi^0},\bar{\eta}}\big)\Psi^l_{\delta_m,\xi^0_m,\eta_m}dv_g
  \\ \leq& \left\{
    \begin{array}{ll}
   \displaystyle C\int\limits_{\mathcal{M}}\mathcal{W}_{\bar{\delta},\bar{\xi^0},\bar{\eta}}^{q-2-\beta\varepsilon}\Psi^2_{\varepsilon,\bar{t},\bar{\xi^0},\bar{\eta}}\Psi^l_{\delta_m,\xi^0_m,\eta_m}dv_g+
  C\int\limits_{\mathcal{M}}\Psi_{\varepsilon,\bar{t},\bar{\xi^0},\bar{\eta}}^{q-\beta\varepsilon}\Psi^l_{\delta_m,\xi_m,\eta_m}dv_g,\quad &\text{if $q>2$},\\
  \displaystyle C\int\limits_{\mathcal{M}}\mathcal{W}_{\bar{\delta},\bar{\xi^0},\bar{\eta}}^{q-2-\beta\varepsilon}\Psi^2_{\varepsilon,\bar{t},\bar{\xi^0},\bar{\eta}}\Psi^l_{\delta_m,\xi^0_m,\eta_m}dv_g,\quad &\text{if $q\leq 2$},
  \end{array}
  \right.
  \\ \leq& \left\{
    \begin{array}{ll}
   \displaystyle C\|\Psi^l_{\delta_m,\xi^0_m,\eta_m}\|_{q+1}\|\Psi_{\varepsilon,\bar{t},\bar{\xi^0},\bar{\eta}}\|_{q+1}^{2}\sum\limits_{j=1}^k\|W_{\delta_j,\xi^0_j,\eta_j}\|_{\frac{(q-2-\beta\varepsilon)(q+1)}{q-2}}^{q-2-\beta\varepsilon}\\
   +
   C\|\Psi^l_{\delta_m,\xi^0_m,\eta_m}\|_{q+1}\|\Psi_{\varepsilon,\bar{t},\bar{\xi^0},\bar{\eta}}\|_{\frac{(q-\beta\varepsilon)(q+1)}{q}}^{q-\beta\varepsilon},\quad &\text{if $q>2$},\\
  C\|\Psi^l_{\delta_m,\xi^0_m,\eta_m}\|_{q+1}\|\Psi_{\varepsilon,\bar{t},\bar{\xi^0},\bar{\eta}}\|_{q+1}^{2}\sum\limits_{j=1}^k\|W_{\delta_j,\xi^0_j,\eta_j}\|_{\frac{(q-2-\beta\varepsilon)(q+1)}{q-2}}^{q-2-\beta\varepsilon},\quad
  &\text{if $q\leq 2$},
  \end{array}
  \right.
  \\
  =&o(\varepsilon).
  \end{align*}
Finally, with the aid of \eqref{e54}-\eqref{e55} and \eqref{real}-\eqref{cla}, we get
\begin{align*}
  &\mathcal{J}_\varepsilon'(\mathcal{W}_{\bar{\delta},\bar{\xi^0},\bar{\eta}}+\Psi_{\varepsilon,\bar{t},\bar{\xi^0},\bar{\eta}},\mathcal{H}_{\bar{\delta},\bar{\xi^0},\bar{\eta}}+\Phi_{\varepsilon,\bar{t},\bar{\xi^0},\bar{\eta}})\big(\partial_{t_m}\Psi_{\varepsilon,\bar{t},\bar{\xi^0},\bar{\eta}},\partial_{t_m}\Phi_{\varepsilon,\bar{t},\bar{\xi^0},\bar{\eta}}\big)
  \\=&\sum\limits_{i=0}^N\sum\limits_{j=1}^kc_{ij}\big\langle\big(\Psi_{\delta_j,\xi^0_j,\eta_j}^i,\Phi^i_{\delta_j,\xi^0_j,\eta_j}\big),\big(\partial_{t_m}\Psi_{\varepsilon,\bar{t},\bar{\xi^0},\bar{\eta}},\partial_{t_m}\Phi_{\varepsilon,\bar{t},\bar{\xi^0},\bar{\eta}}\big)\big\rangle_h
  \\=&-\sum\limits_{i=0}^N\sum\limits_{j=1}^kc_{ij}\delta_{jm}\big\langle\big(\partial_{t_m}\Psi_{\delta_j,\xi^0_j,\eta_j}^i,\partial_{t_m}\Phi^i_{\delta_j,\xi^0_j,\eta_j}\big),\big(\Psi_{\varepsilon,\bar{t},\bar{\xi^0},\bar{\eta}},\Phi_{\varepsilon,\bar{t},\bar{\xi^0},\bar{\eta}}\big)\big\rangle_h=o(\varepsilon),
\end{align*}
and
\begin{align*}
  &\mathcal{J}_\varepsilon'(\mathcal{W}_{\bar{\delta},\bar{\xi^0},\bar{\eta}}+\Psi_{\varepsilon,\bar{t},\bar{\xi^0},\bar{\eta}},\mathcal{H}_{\bar{\delta},\bar{\xi^0},\bar{\eta}}+\Phi_{\varepsilon,\bar{t},\bar{\xi^0},\bar{\eta}})\big(\partial_{\eta_{ml}}\Psi_{\varepsilon,\bar{t},\bar{\xi^0},\bar{\eta}},\partial_{\eta_{ml}}\Phi_{\varepsilon,\bar{t},\bar{\xi^0},\bar{\eta}}\big)\\
  =&\sum\limits_{i=0}^N\sum\limits_{j=1}^kc_{ij}\big\langle\big(\Psi_{\delta_j,\xi^0_j,\eta_j}^i,\Phi^i_{\delta_j,\xi^0_j,\eta_j}\big),\big(\partial_{\eta_{ml}}\Psi_{\varepsilon,\bar{t},\bar{\xi^0},\bar{\eta}},\partial_{\eta_{ml}}\Phi_{\varepsilon,\bar{t},\bar{\xi^0},\bar{\eta}}\big)\big\rangle_h\\
  =&-\sum\limits_{i=0}^N\sum\limits_{j=1}^kc_{ij}\delta_{jm}\big\langle\big(\partial_{\eta_{ml}}\Psi_{\delta_j,\xi^0_j,\eta_j}^i,\partial_{\eta_{ml}}\Phi^i_{\delta_j,\xi^0_j,\eta_j}\big),\big(\Psi_{\varepsilon,\bar{t},\bar{\xi^0},\bar{\eta}},\Phi_{\varepsilon,\bar{t},\bar{\xi^0},\bar{\eta}}\big)\big\rangle_h
=o(\varepsilon).
\end{align*}
This concludes the proof.
\end{proof}

\noindent{\bfseries Acknowledgements}:

The authors were supported by Natural Science Foundation of Chongqing, China cstc2021ycjh-bgzxm0115.

\noindent{\bfseries Declarations}:

{\bf Conflict of Interest}: The authors declare that they have no conflict of interest.

\noindent
{\bf Data Availability:}

Date sharing is not applicable to this article as no new data were created analyzed in this study.

\end{document}